\renewcommand{\epsilon}{\varepsilon}
\theoremstyle{plain}
\newtheorem{theorem}{Theorem}[section]
\newtheorem{lemma}[theorem]{Lemma}
\newtheorem{proposition}[theorem]{Proposition}
\newtheorem{corollary}[theorem]{Corollary}
\theoremstyle{definition}
\newtheorem{rem}[theorem]{Remark}
\newenvironment{myitemize}{%
\begin{list}{$\bullet$}%
 	{%
	\setlength{\itemsep}{0.4em}%
	\setlength{\topsep}{0.5em}%
	\setlength\leftmargin{2.45em}%
	\setlength\labelwidth{2.05em}%
	\setlength{\labelsep}{0.4em}%
	}%
	}%
{\end{list}}
\renewenvironment{itemize}{
\begin{myitemize}}%
{\end{myitemize}}
\numberwithin{equation}{section}
\newcommand{\GL}{\mathrm{GL}}
\newcommand{\pos}{\mathcal{P}}
\newcommand{\ort}{\mathcal{O}}
\newcommand{\diff}{\mathop{}\!\mathrm{d}}
\DeclareMathOperator{\tr}{tr}
\newcommand{\RW}{R^{w}_1}
\newcommand{\RWmod}{R^{w}_2}
\newcommand{\mult}{T^{w}}
\newcommand{\Tmod}{\mathbb{T}}
\newcommand{\multmod}{\Tmod^{w}}
\newcommand{\Wish}{\mathrm{W}}
\newcommand{\invWish}{\mathrm{IW}}
\newcommand{\BetaI}{\mathrm{B}^{\mathrm{I}}}
\newcommand{\invBetaI}{\mathrm{IB}^{\mathrm{I}}}
\newcommand{\BetaII}{\mathrm{B}^{\mathrm{II}}}
\renewcommand{\@secnumfont}{\bfseries}
\renewcommand\section{\@startsection{section}{1}%
\z@{.7\linespacing\@plus\linespacing}{.5\linespacing}%
{\large\bfseries\scshape\centering}}
\renewcommand\subsection{\@startsection{subsection}{2}%
  \z@{.5\linespacing\@plus.7\linespacing}{-.5em}%
  {\bfseries\scshape}}
\renewcommand\subsubsection{\@startsection{subsubsection}{3}%
  \z@{.5\linespacing\@plus.7\linespacing}{-.5em}%
  {\scshape}}
\author[J.~Arista]{Jonas Arista}
\address{Universit\"at Bielefeld\\
Fakult\"at f\"ur Mathematik \\
Universit\"atsstra{\ss}e 25\\
33615 Bielefeld, Germany}
\email{jarista@math.uni-bielefeld.de}
\author[E.~Bisi]{Elia Bisi}
\address{Technische Universit\"at Wien \\
Institut f\"ur Stochastik und Wirtschaftsmathematik \\
E 105-07 \\
Wiedner Hauptstra{\ss}e 8-10 \\
1040 Wien, Austria}
\email{elia.bisi@tuwien.ac.at}
\author[N.~O'Connell]{Neil O'Connell}
\address{School of Mathematics and Statistics\\
University College Dublin\\
Dublin 4, Ireland}
\email{neil.oconnell@ucd.ie}
\thanks{\emph{}
Research supported by the European Research Council (grant 669306).}
\keywords{Matrix Dufresne identity; matrix Matsumoto-Yor theorem; intertwining relations; stochastic matrix recursions and equations; matrix variate distributions; Wishart and Beta distributions; Lyapunov exponents.}
\subjclass[2010]{Primary:
60K35, 
82B23, 
60B20. 
Secondary:
60G10, 
22E30, 
62H10. 
}
\begin{document}

\title{Matsumoto-Yor and Dufresne type theorems \\ for a random walk on positive definite matrices} 
\maketitle

\begin{abstract}
We establish analogues of the geometric Pitman $2M-X$ theorem of Matsumoto and Yor and of the classical Dufresne identity, for a multiplicative random walk on positive definite matrices with Beta type II distributed increments.
The Dufresne type identity provides another example of a stochastic matrix recursion, as considered by Chamayou and Letac (J.\ Theoret.\ Probab.\ 12, 1999), that admits an explicit solution.
\end{abstract}

\section{Introduction}
\label{sec:intro}

\subsection{Background and literature}
\label{subsec:background}

Let $B=(B(t), t\ge0)$ be a one-dimensional Brownian motion with variance $1/2$ and drift $\alpha\in\mathbb{R}$; namely, $B(t)=W(t)/\sqrt{2} + \alpha t$, where $W(t)$ is a standard one-dimensional Brownian motion.
Let
\[
G(t):= e^{2B(t)} , \qquad 
H(t):= \int_0^t G(s) \diff s, \qquad
t\geq 0.
\]
Matsumoto and Yor~\cite{MY1} proved that
\begin{align}\label{geolifting}
D(t)
:=G(t) H(t)^{-2}, \qquad t>0,
\end{align}
is a diffusion process, in its own filtration, with an explicit infinitesimal generator.

This result is related to the celebrated Pitman $2M-X$ theorem~\cite{Pit, RP}, which states that, setting
\begin{align*}
\mathscr{D}(t)
:= 2 \left(\sup_{0\le s\le t} B(s)\right)- B(t),\qquad t\geq 0,
\end{align*}
the process $(\mathscr{D}(2t), t\geq 0)$ is Markov and equals in law the three-dimensional Bessel process, started at zero, with drift $2|\alpha|$.
The process $D$ can be regarded as a `geometric lifting' of $\mathscr{D}$.
Indeed, the scaling property of Brownian motion and a Laplace approximation argument yield that the process $\big(-\frac{\epsilon}{2}\log D\big(\frac{t}{\epsilon^{2}}\big),t>0\big)$, constructed starting from a Brownian motion $B$ with drift $\epsilon\alpha$, converges weakly to the process $(\mathscr{D}(t),t>0)$ as $\epsilon\downarrow 0$.

A related theorem, usually referred to as Dufresne identity, states that, if $B$ has drift $-\alpha/2$, with $\alpha>0$, then the random variable
\begin{align}\label{DIdentity}
\lim_{t\to\infty} H(t)
= \int_{0}^{\infty} G(s) \diff s
= \int_{0}^{\infty} e^{\sqrt{2}W(s)-\alpha s} \diff s
\end{align}
is finite and has the \emph{inverse gamma distribution} with parameter $\alpha$.
This result has been discovered simultaneously by Dufresne~\cite{D2}, in a financial context, and by Bouchaud, Comtet, Georges and Le Doussal~\cite{BCGLD90}, in a physical context.

Rider and Valk\'o~\cite{RV} proved a matrix version of the Dufresne identity~\eqref{DIdentity} and introduced matrix-valued diffusions that generalise~\eqref{geolifting}.
More precisely, let now $B(t)$ be a $d\times d$ matrix-valued process whose entries evolve as independent one-dimensional Brownian motions with variance $1/2$ and drift $\alpha$.
Let $\pos_{d}$ be the space of $d\times d$ positive definite real symmetric matrices.
Then, following~\cite[Section~2.9]{OC}, a $\GL_{d}$-invariant Brownian motion on $\pos_{d}$ with drift $\alpha$ may be constructed as the process $G=M^{\top} M$, where $M$ solves the Stratonovich SDE $\partial_t M(t) = \big(\partial_t B(t)\big) M(t)$.
It was proved in~\cite{RV} that, when $|\alpha|>\frac{d-1}{2}$, the process
\begin{align}\label{matrixgeolifting}
D(t)
:=H(t)^{-1} \, G(t) \, H(t)^{-1},\qquad t>0,
\end{align}
is a diffusion in $\pos_{d}$, where
\begin{align}
\label{eq:runningIntegral}
H(t):=\int_{0}^{t} G(s) \diff s,\qquad t\ge0.
\end{align}
This result was extended to arbitrary $\alpha\in\mathbb{R}$ in~\cite{OC}.

Moreover, it was shown in~\cite{RV} that, if $G$ has drift $-\alpha/2$, with $\alpha>\frac{d-1}{2}$, then the $d\times d$ random matrix $\lim_{t\to\infty} H(t)$ has the \emph{inverse Wishart distribution} with parameter $\alpha$ (as defined in Section~\ref{subsec:orthInvariant}).
Clearly, in the case $d=1$, the results of~\cite{RV,OC} reduce to the aforementioned scalar versions.
In the physics literature, variations of this matrix Dufresne identity were studied in~\cite{GT20}, in the context of wave scattering in complex media, and in~\cite{GBLD}, in the context of fermions in a Morse potential.

\subsection{Contributions of this work}
\label{subsec:ourContributions}

In this article, we establish analogues of the above results for a certain class of random walks on the space of positive definite matrices.
More precisely, we consider a $\GL_{d}$-invariant multiplicative random walk $R$ on $\pos_{d}$, which may be defined as follows.
Given a (possibly random) initial state $R(0)$ in $\pos_d$ and a sequence $(X(n), n\geq 1)$ of i.i.d.\ and orthogonally invariant random matrices in $\pos_d$, we define recursively
\begin{equation}
\label{eq:RW_intro}
R(n) = R(n-1)^{1/2} X(n) R(n-1)^{1/2} ,
\end{equation}
where, for $x\in\pos_d$, $x^{1/2}$ denotes the only matrix $b\in\pos_d$ such that $bb=x$.
We also consider the running sum of $R$:
\begin{align*}
A(n):=\sum_{k=0}^{n} R(k),\qquad n\ge0,
\end{align*}
which may be viewed as a discrete-time analogue of~\eqref{eq:runningIntegral}.
Then, we define a discrete-time version of~\eqref{matrixgeolifting} by setting
\begin{align*}
S(n) := A(n-1)^{-1} R(n) A(n)^{-1},\qquad n\ge1.
\end{align*}

The first main result of this work is that the process $S=(S(n),n\ge1)$ has the Markov property for a certain choice of the law of the random walk $R$ (see Theorem~\ref{R-P} for a more precise statement). This may be seen as a discrete-time matrix analogue of the Matsumoto-Yor theorem.

\begin{theorem}
\label{thm:MatYor_intro}
If the initial state $R(0)$ of the random walk has the inverse Wishart distribution of parameter $\beta$ and its increments $X(n)$ have the matrix Beta type II distribution (as defined in Section~\ref{subsec:orthInvariant}) of parameters $\alpha$ and $\beta$, then $S=(S(n),n\ge1)$ is a Markov process in its own filtration with an explicit transition kernel.
\end{theorem}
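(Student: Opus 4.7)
The plan is to prove Theorem~\ref{thm:MatYor_intro} via a Rogers-Pitman type argument combined with explicit matrix density computations. The key structural observation is the telescoping identity
\begin{equation*}
S(n) = A(n-1)^{-1} - A(n)^{-1},
\end{equation*}
obtained from $R(n) = A(n) - A(n-1)$ by pre- and post-multiplying by $A(n-1)^{-1}$ and $A(n)^{-1}$. Setting $Y(n) := A(n)^{-1}$, the sequence $(Y(n))_{n \geq 0}$ is strictly decreasing in the $\pos_d$-order with decrements $S(n)$. The pair $(R(n), A(n))$ (equivalently $(Y(n-1), Y(n))$) forms a Markov chain on $\pos_d \times \pos_d$, and $S$ is a deterministic function of it.

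The main step is to show that, for every $n \geq 1$, the conditional law of $A(n)$ given $\sigma(S(1), \dots, S(n))$ depends only on $S(n)$:
\begin{equation*}
\mathcal L\bigl(A(n) \,\big|\, S(1), \dots, S(n)\bigr) = K_n(S(n), \cdot)
\end{equation*}
for an explicit Markov kernel $K_n$. To establish this, I would write down the joint density of $(R(0), R(1), \dots, R(n))$ using the inverse Wishart law of $R(0)$, the Beta type II laws of the increments $X(k) = R(k-1)^{-1/2} R(k) R(k-1)^{-1/2}$, and the Jacobian $|R(k-1)|^{(d+1)/2}$ of the maps $X(k) \mapsto R(k)$; and then change variables to $(A(n), S(1), \dots, S(n))$. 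The target is a factorization
\begin{equation*}
\rho(s_1, \dots, s_n) \, \kappa_n(a_n \mid s_n)
\end{equation*}
of the resulting density. Once this factorization is in hand, the Markov property of $S$ follows: using $A(n-1) = (A(n)^{-1} + S(n))^{-1}$, the conditional law of the full Markov state $(R(n), A(n))$ given the $S$-history depends only on $S(n)$; and since $X(n+1)$ is independent of this history and $S(n+1) = A(n)^{-1} - \bigl(A(n) + R(n)^{1/2} X(n+1) R(n)^{1/2}\bigr)^{-1}$, the conditional law of $S(n+1)$ given $\sigma(S(1), \dots, S(n))$ depends only on $S(n)$.

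The hard part is executing the density factorization. It requires careful matrix-variable Jacobian calculus on $\pos_d$ (for the maps $x \mapsto x^{-1}$, $x \mapsto b^{1/2} x b^{1/2}$, and $a \mapsto a + r$), together with a precise matching of Wishart and Beta shape parameters across the inductive steps in $n$---I expect the parameter governing $K_n$ to advance by $\alpha$ at each step, reflecting the accumulation of Beta type II increments. The main algebraic tools should be matrix Beta-Gamma identities: notably, the fact that if $W_1 \sim \Wish(\alpha)$ and $W_2 \sim \Wish(\beta)$ are independent and orthogonally invariant, then $W_1 + W_2 \sim \Wish(\alpha + \beta)$ and $(W_1+W_2)^{-1/2} W_1 (W_1+W_2)^{-1/2} \sim \BetaI(\alpha, \beta)$ is independent of $W_1 + W_2$, together with its Beta type II counterpart. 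An essentially equivalent alternative formulation would be to propose an intertwining kernel $\Lambda: \pos_d \to \pos_d$ between the transition of $(R, A)$ and a candidate transition $Q$ of $S$, and verify $\Lambda P = Q \Lambda$ by the same density computation.
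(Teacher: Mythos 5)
Your overall strategy---Rogers--Pitman/Markov-functions with an explicit conditional-distribution (intertwining) kernel built from $\pos_d$ density and Jacobian calculus---is the same as the paper's, and your telescoping observation $S(n)=A(n-1)^{-1}-A(n)^{-1}$ is a genuinely nice identity (it is not made explicit in the paper and would streamline the change of variables). The main bookkeeping difference is that you propose to compute the full joint density of $(R(0),\dots,R(n))$ and factorize it as $\rho(s_1,\dots,s_n)\,\kappa_n(a_n\mid s_n)$, whereas the paper establishes a \emph{one-step} intertwining $K\,\Pi=Q\,K$ (Proposition~\ref{Int}) together with the initial-condition identity $\lambda\,\Pi=\eta\,\overline{K}$ (Lemma~\ref{initialintertwining}), and then invokes the abstract Rogers--Pitman theorem (Theorem~\ref{MarkovFunctionsT}); your $n$-step factorization is exactly what that theorem packages up.

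There is, however, one conceptual error that would misdirect your computation. You expect the conditional kernel $K_n$ of $A(n)$ given $S(n)$ to have a parameter ``advancing by $\alpha$ at each step.'' This is wrong: the whole point of the tuned initial law $R(0)\sim\invWish_d(\beta)$ is that the conditional law of $(R(n),A(n))$ given $S(n)$ is $\overline{K}(S(n);\cdot)$ \emph{for every} $n\ge1$, with $\overline{K}$ independent of $n$ (it involves the normalisation $\varphi$, a matrix Whittaker-type function). If $K_n$ drifted in $n$, the transition kernel of $S$ would be time-inhomogeneous, contradicting what the theorem asserts (Theorem~\ref{R-P}); Lemma~\ref{initialintertwining} is precisely the check that the initial law lands on this stationary fibre. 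Beyond that, you explicitly leave the ``hard part'' undone, and that is where the substance of the theorem lies: the two-sided density computation of Proposition~\ref{Int}, which requires interchanging integrals, the matrix substitution $\tilde{s}\mapsto\tilde{r}:=\tilde{a}(\tilde{s}^{-1}+\tilde{a})^{-1}\tilde{a}$, and the inversion-invariance of $\mu$, is nontrivial and is not something that the matrix Beta--Gamma algebra you cite hands you for free. So: right framework, one useful extra identity, but a wrong ansatz for $K_n$ and the crucial calculation missing.
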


To prove Theorem~\ref{thm:MatYor_intro}, we use a classical criterion (reviewed in Appendix~\ref{app:MarkovFunctions}) for a function of a Markov process to be Markov itself. Notice that $S(n)$ is a (deterministic) function of $(R(n),A(n))$ and that the pair $(R,A)$ has a Markov evolution.
The key step in the proof is to establish an \emph{intertwining relation}: we show that there exists a Markov kernel $\overline{Q}$ such that $\overline{K}\, \Pi =\overline{Q} \, \overline{K}$, where $\Pi$ is the transition kernel of $(R,A)$ and $\overline{K}$ is the intertwining kernel, which encodes the conditional distribution of $(R,A)$ given $S$ (see Corollary~\ref{Maininter}).
This essentially implies that $S$ is Markov with transition kernel $\overline{Q}$.

We stress that Theorem~\ref{thm:MatYor_intro} appears to be new even in the $d=1$ case.
In fact, to arrive at the assumptions of Theorem~\ref{thm:MatYor_intro} and thus, ultimately, at the intertwining relation, we started from some computations and considerations in the scalar case.
These are based on the integrability of a random polymer model with inverse gamma weights, studied in~\cite{COSZ} by means of a geometric Robinson-Schensted-Knuth dynamics.
In particular, the Beta type II random walk of Theorem~\ref{thm:MatYor_intro} arises, essentially, as a ratio of two random walks with inverse gamma increments, see equation~\eqref{eq:randomwalk}.
As we believe that one might both draw motivation and gain insight from these preliminary considerations, we have included an outline of them in Section~\ref{sec:scalarCase}.

Our second main result is the following discrete-time matrix analogue of the Dufresne identity (see Theorem~\ref{Dufresne} for a more precise statement).
\begin{theorem}
\label{thm:Dufresne_intro}
Under the hypotheses of Theorem~\ref{thm:MatYor_intro} and the additional condition $\beta-\alpha>\frac{d-1}{2}$, the limit $\lim_{n\to\infty}A(n)$ exists a.s.\ and has the inverse Wishart distribution with parameter $\beta-\alpha$.
\end{theorem}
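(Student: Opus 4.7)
The plan is to split the proof into two stages: first establishing almost sure convergence of $A(n)$ to a limit $A_\infty \in \pos_d$, then identifying the law of $A_\infty$ as inverse Wishart$(\beta-\alpha)$.

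For the convergence, I would observe that $A(n) = \sum_{k=0}^n R(k)$ is nondecreasing in the Loewner order, since every summand lies in $\pos_d$, so $A(n)$ converges almost surely to some limit $A_\infty$ in the closure of $\pos_d$. To show $A_\infty$ is finite almost surely, I would bound $\mathbb{E}[\tr R(n)]$. By orthogonal invariance of $X(n)$, $\mathbb{E}[X(1)] = c\, I$ for some scalar $c = c(\alpha,\beta,d)$; conditioning on $R(n-1)$ then yields $\mathbb{E}[R(n)] = c^n\, \mathbb{E}[R(0)]$, and the hypothesis $\beta - \alpha > (d-1)/2$ should be exactly what ensures both the integrability of matrix Beta type II (so $c$ is well-defined) and $c < 1$, giving $\mathbb{E}[\tr A_\infty] < \infty$ and hence $A_\infty \in \pos_d$ almost surely.

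To identify the distribution of $A_\infty$, the main tool is Theorem~\ref{thm:MatYor_intro} together with the telescoping identity
\[
S(n) = A(n-1)^{-1} R(n) A(n)^{-1} = A(n-1)^{-1}\bigl(A(n) - A(n-1)\bigr) A(n)^{-1} = A(n-1)^{-1} - A(n)^{-1},
\]
which, summed and passed to the limit, yields $A_\infty^{-1} = R(0)^{-1} - \sum_{n \ge 1} S(n)$ almost surely. Theorem~\ref{thm:MatYor_intro} describes $(S(n))_{n \ge 1}$ as a Markov chain with an explicit transition kernel $\overline{Q}$, while the intertwining relation $\overline{K}\,\Pi = \overline{Q}\,\overline{K}$ of Corollary~\ref{Maininter} controls the conditional law of $(R(n),A(n))$ given the history of $S$. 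These two ingredients should be enough to pin down the joint law of $(R(0), S(1), S(2), \dots)$ and thereby the marginal of the limiting sum, which one matches to an inverse Wishart$(\beta-\alpha)$ matrix, most cleanly via the Laplace transform.

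The main obstacle, I anticipate, is this final matching rather than the convergence step. Because $S(n) \to 0$ almost surely, the chain $(S(n))$ is transient and has no nondegenerate stationary distribution, so no ergodic shortcut is available; the law of $\sum_{n \ge 1} S(n)$ must instead be extracted by explicit integration against $\overline{Q}$ and $\overline{K}$, and then combined with the (dependent) initial inverse Wishart matrix $R(0)$. A natural backup, in the spirit of Chamayou-Letac, is to guess inverse Wishart$(\beta-\alpha)$ as the solution of a distributional fixed-point equation for $A_\infty$ derived from the multiplicative recursion $R(n) = R(n-1)^{1/2} X(n) R(n-1)^{1/2}$, and to verify it by a change of variables using classical Wishart and Beta identities; I would fall back on this strategy if the kernel computation proves unwieldy.
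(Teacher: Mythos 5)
Your plan has two genuine gaps, one in the convergence step and one in the identification step, and I will also say a word about your fallback.

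\textbf{Convergence step.} The first-moment argument is too coarse for the claimed parameter range. By $\ort_d$-invariance one does have $\mathbb{E}[X(1)]=cI$ and $\mathbb{E}[R(n)]=c^n\,\mathbb{E}[R(0)]$ when everything is integrable, but the Beta type II mean is $c=\alpha/(\beta-\tfrac{d+1}{2})$, which requires $\beta>\tfrac{d+1}{2}$ to exist and satisfies $c<1$ only when $\beta-\alpha>\tfrac{d+1}{2}$. Likewise $\mathbb{E}[R(0)]$ for $R(0)\sim\invWish_d(\beta)$ is finite only when $\beta>\tfrac{d+1}{2}$. So your route proves convergence under $\beta-\alpha>\tfrac{d+1}{2}$, not the claimed $\beta-\alpha>\tfrac{d-1}{2}$: it loses a full unit of parameter. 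The paper avoids this by working with \emph{log}-moments via Lyapunov exponents (Appendix~\ref{app:lyapunovExponents}): the top Lyapunov exponent is $\psi(\alpha)-\psi\big(\beta-\tfrac{d-1}{2}\big)$, which is negative precisely when $\beta-\alpha>\tfrac{d-1}{2}$, and then $\lambda_{\max}(R(n))$ decays geometrically a.s.\ regardless of whether any polynomial moment is finite. Log-moments are the right object here; first moments are not sharp.

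\textbf{Identification step.} Your telescoping identity $S(n)=A(n-1)^{-1}-A(n)^{-1}$ is correct and gives $A_\infty^{-1}=R(0)^{-1}-\sum_{n\ge1}S(n)$, but, as you yourself anticipate, extracting the law of $\sum_n S(n)$ by explicit integration against $\overline{Q}$ and then combining it with the dependent $R(0)$ is the hard way round. The step you are missing is simpler and does not involve $\sum_n S(n)$ at all: $\overline{K}(s;\cdot)$ is, by Theorem~\ref{R-P} (equation~\eqref{eq:R-P_conditional}), the conditional law of $(R(n),A(n))$ given $S(n)=s$. Averaging over $\nu_n=\mathscr{L}(S(n))$ and marginalising onto the second coordinate gives $\mathbb{E}[g(A(n))]=\int\nu_n(\diff s)\,K^{\dagger}g(s)$, where $K^{\dagger}(s;\diff a)=\varphi(s)^{-1}k(s;\diff a)$. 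Since $S(n)\to0$ a.s., $\nu_n\Rightarrow\delta_0$, and $K^{\dagger}(0;\cdot)$ is exactly $\invWish_d(\beta-\alpha)$. This one-line disintegration-plus-limit argument replaces the ``unwieldy kernel computation'' you worry about.

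\textbf{Fallback.} Your Chamayou--Letac fixed-point fallback is the spirit of the paper's alternative proof (Section~\ref{sec:stochasticEqns}), but it does not come for free from the recursion $R(n)=R(n-1)^{1/2}X(n)R(n-1)^{1/2}$: $A(n)$ and $R(n)$ are coupled and do not yield a self-contained fixed-point equation for $A_\infty$. The paper instead switches to the alternative realisation $\RWmod$ of Proposition~\ref{prop:invRW}, which expresses $A_\infty=T_{R(0)}(I+Z_\infty)$ with $Z_\infty$ independent of $R(0)$ and satisfying the Kesten-type equation $Z\stackrel{\mathrm{d}}{=}T_X(I+Z)$; this is then solved using the matrix beta-gamma algebra. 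That extra change of realisation is what makes the fixed-point strategy actually close.
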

We prove this as a consequence of the intertwining relation mentioned above. Essentially, the limiting distribution of $A(n)$ is given by the intertwining kernel $\overline{K}(s; \cdot)$ in the limit as $s\to 0$.

Let us recall that one of the standard proofs of the classical Dufresne identity~\eqref{DIdentity} (see e.g.~\cite{BCGLD90}) relies on constructing a modification of the process $H=(H(t), t\geq 0)$, which has the same fixed-time marginal distributions.
One can show that this auxiliary process is a diffusion (even though $H$ is not!), for which the inverse gamma distribution is stationary.
By letting $t\to\infty$, one obtains the desired distributional identity for $H$.
Close in spirit to this approach, we provide an alternative proof of Theorem~\ref{thm:Dufresne_intro} (see Section~\ref{sec:stochasticEqns}).
Let $\xi$ be the matrix-valued Markov process determined by the stochastic recursion
\begin{equation}
\label{eq:stochRecursion_intro}
\xi(n) := X(n)^{1/2} (I+\xi(n-1)) X(n)^{1/2} , \qquad n\geq 1,
\end{equation}
where $(X(n), n\geq 1)$ are i.i.d.\ Beta type II matrices as above.
By using a different realisation (introduced in Proposition~\ref{prop:invRW}) of the random walk $R$, we show that, whenever $\lim_n A(n)$ is a.s.\ finite, its distribution can be expressed in terms of the unique stationary distribution of $\xi$.
To determine the latter, we are then reduced to solving the stochastic matrix equation
\begin{equation}
\label{eq:stochEq_intro}
Z\stackrel{{\rm d}}{=}
X^{1/2} (I+Z) X^{1/2},
\end{equation}
for $Z$, where $X$ has the same distribution as $X(1)$ and is independent of $Z$. This also establishes a connection with the stochastic matrix recursions considered by Chamayou and Letac in~\cite[Section 6]{CL2}. The case $d=1$ of~\eqref{eq:stochEq_intro} dates back to~\cite[Example 9]{CL}, see also~\cite[Remark 1]{D}.
We solve~\eqref{eq:stochEq_intro} by using certain properties that relate Wishart and matrix Beta distributions (see e.g.~\cite{CaL, OR}), which may be seen as a matrix generalisation of the so-called `beta-gamma algebra' (see Proposition~\ref{SAE}).

For $d=1$, the process defined in~\eqref{eq:stochRecursion_intro}, for general distribution on $X(1)$, is the classical Kesten recursion introduced in~\cite{Kes73}.
This is of fundamental importance in the study of random walks in random environment~\cite{Sol75}, among other mathematics and physics applications.
One of the salient features of the Kesten recursion is that it has a heavy-tailed limit law~\cite{Kes73}.

It is worth comparing~\eqref{eq:stochRecursion_intro} with the matrix stochastic recursion determined by the alternative symmetrisation
\begin{equation}
\label{eq:stochRecursion2_intro}
\xi'(n) := (I+\xi'(n-1))^{1/2}  X(n) (I+\xi'(n-1))^{1/2} , \qquad n\geq 1.
\end{equation}
Although different from $\xi$, the process $\xi'$ has the same stationary distribution (see Corollary~\ref{coro:invariantBothProcesses}).
Gautié, Bouchaud and Le Doussal~\cite{GBLD} considered $\xi'$ for more general distributions on $X(1)$ and studied its limit law in the regime when the matrix dimension $d$ grows large, showing that the distribution of the eigenvalues exhibits heavy tails and thus generalising Kesten's result to the matrix case.
They also considered the continuous limit of the process $\xi'$, which is a diffusion in $\pos_{d}$, finding a version of the matrix Dufresne identity in this setting.
Such a diffusion was also discussed briefly in~\cite[Section 9]{OC}.
On the other hand, a continuous-time version of the process $\xi$, also a diffusion in $\pos_{d}$, was studied in~\cite{RV} (see also~\cite[Section 9]{OC}).
The former is driven by a $\GL_{d}$-invariant Brownian motion on $\pos_{d}$, whereas the latter is driven by a different, $\ort_{d}$-invariant, `Brownian motion' on $\pos_{d}$.
See e.g.~\cite{NRW} for a detailed explanation on the distinction between these two types of Brownian motion on $\pos_{d}$.

\subsection{Related aspects}
\label{subsec:furtherDirections}

A question that naturally arises is whether results similar to those presented here may be obtained for matrix random walks with different distributions.
Somewhat conversely, it would be interesting to investigate if other matrix distributions have representations in terms of matrix random walks.
Notice that other examples of (matrix) stochastic equations have been considered e.g.\ in~\cite{CL,CL2}, which may provide a helpful starting point for answering such questions.
In general, most of these equations involve Wishart and Beta type distributions and generalisations of them.

The Matsumoto-Yor and Dufresne theorems are deeply related to the integrability of the semi-discrete Brownian polymer~\cite{OC12}.
In the matrix setting, certain interacting diffusions with integrable properties have been studied in~\cite{OC}, in connection with the matrix Dufresne identity of~\cite{RV}.
Analogously, given the results of the present work in the discrete matrix setting, one might wonder about possible integrable matrix analogues of discrete polymer models.
In this direction, a system of matrix-valued interacting random walks with inverse Wishart increments, which can be seen as a matrix generalisation of the log-gamma polymer~\cite{Sep12}, has been recently introduced by the authors in~\cite{ABO22}.

\subsection{Organisation of the article}

As a motivation, in Section~\ref{sec:scalarCase} we discuss some of the results of this article in the scalar $d=1$ case.
In Sections~\ref{sec:preliminaries}, we collect some preliminary notions about measures and random walks on positive definite matrices.
In Section~\ref{sec:mainResults}, we establish the two main results of this work: discrete-time matrix versions of the Matsumoto-Yor theorem and of the Dufresne identity.
In Section~\ref{sec:stochasticEqns}, we discuss the connections with stochastic matrix equations and give another proof of the Dufresne type identity.
In Appendix~\ref{app:InvRW}, we prove the equivalence of various constructions of matrix random walks.
Appendix~\ref{app:WishartBeta} collects some properties of the Wishart and matrix Beta distributions.
Appendix~\ref{app:lyapunovExponents} presents a method for computing Lyapunov exponents of matrix random walks using Cholesky decompositions.
Finally, Appendix~\ref{app:MarkovFunctions} reviews the theory of Markov functions.

\section{Motivation: the scalar case}
\label{sec:scalarCase}

In~\cite{COSZ}, a certain Markov dynamics on triangular arrays of positive real numbers, based on the so-called \emph{geometric RSK correspondence}, was considered.
In the particular case where the triangular arrays consist of only two rows, the dynamics is on three `particles' $(X,Y,Z)$ and is defined as follows.
Let $(\mathfrak{A}(n), n\ge1)$ and $(\mathfrak{B}(n), n\ge1)$ be two families of independent \emph{inverse gamma} random variables with parameters $\alpha$ and $\beta$, respectively (for us, the gamma distribution is the continuous distribution on $\mathbb{R}_+$ with density $\Gamma(\alpha)^{-1} x^{\alpha-1} e^{-x}$).
Let us fix a (possibly random) initial state $(X(0),Y(0),Z(0))$ in $\mathbb{R}_{+}^{3}$.
The particle $X$ evolves as a multiplicative random walk on $\mathbb{R}_{+}$ with increments $(\mathfrak{B}(n), n\ge1)$, i.e.\
\begin{align*}
X(n):=X(n-1)\mathfrak{B}(n).
\end{align*}
At time $n$, once $X$ has been updated, the particles $Y$ any $Z$ are updated as follows:
\begin{align*}
Y(n):=\left(Y(n-1)+X(n)\right)\mathfrak{A}(n),\qquad Z(n):=\frac{Z(n-1)}{X(n-1)} \frac{X(n)Y(n-1)}{X(n)+Y(n-1)}.
\end{align*}
The above equations can be seen as a `geometric lifting' of a transformation of paths introduced in~\cite{OY02} (see also~\cite{OC03,OC03B}) in a queueing-theoretic context and closely related to the Pitman $2M-X$ theorem.
By induction, one obtains the following closed expression for $Y$:
\begin{align}
\label{eq:polymer}
Y(n)&= Y(0)\prod_{k=1}^n \mathfrak{A}(k) + X(0)\sum_{k=1}^n \left(\prod_{i=1}^k \mathfrak{B}(i)\right) \left(\prod_{i=k}^n \mathfrak{A}(i)\right) .
\end{align}
Choosing the initial conditions $Y(0)=0$ and $X(0)=1$, $Y(n)$ takes then the form of the (two-row) partition function of the \emph{log-gamma polymer} model, whose distribution was a central object of study in~\cite{COSZ}.

It was proved in~\cite{COSZ} that, for a special class of initial distributions for $(X,Y,Z)$, the process $(Y,Z)$ is (autonomously) Markov with an explicit transition kernel.
An elementary calculation, based on~\eqref{eq:polymer} and the fact that $Y(n)Z(n) = Y(0)Z(0) \prod_{k=1}^n \mathfrak{A}(k)\mathfrak{B}(k)$, shows that the ratio $Z(n)/Y(n-1)$ takes the form
\begin{equation}\label{eq:MYtype}
S(n):=\frac{Z(n)}{Y(n-1)}=\left(\sum_{k=0}^{n-1}R(k)\right)^{-1}R(n)\left(\sum_{k=0}^{n}R(k)\right)^{-1},\quad n\ge1,
\end{equation}
where
\begin{equation}\label{eq:randomwalk}
R(n):=\frac{X(0)}{Z(0)}\prod_{i=1}^{n}\frac{\mathfrak{B}(i)}{\mathfrak{A}(i-1)},\quad n\ge0,
\end{equation}
with the convention $\mathfrak{A}(0):=Y(0)/X(0)$.

The process $S=(S(n),n\ge1)$ in~\eqref{eq:MYtype} may be seen as a discrete-time version of the Matsumoto-Yor diffusion (\ref{geolifting}) for the random walk $R=(R(n),n\ge0)$; however, $S$ is not Markov for the whole class of initial distributions for $(X,Y,Z)$ found by~\cite{COSZ}.
The original motivation for the present article was to find further conditions on $(X(0),Y(0),Z(0))$ that guarantee the Markov property for $S$. 

Define
\begin{equation}
\label{eq:phi_d=1}
\varphi(s):=\int_0^{\infty} x^{\alpha-\beta} (1+sx)^{-\alpha} e^{-1/x} \frac{\diff x}{x} , \qquad s>0.
\end{equation}
Since $S$ is a function of $(Y,Z)$ and we know from~\cite{COSZ} the transition kernel of $(Y,Z)$ explicitly, a few computations (which we omit) show that, if $Z(0):=z$, with $z\in\mathbb{R}_{+}$, and $(X(0),Y(0))$ is chosen with distribution
\begin{align}\label{initialxy}
\frac{1}{\Gamma(\alpha)}\frac{1}{\varphi(z)}\left(\frac{x}{z}\right)^{\alpha-\beta}y^{-\alpha}\exp\left\{-\frac{z}{x}-\frac{1}{y}(1+x)\right\}\frac{\diff x}{x}\frac{\diff y}{y},
\end{align}
then $S$ has the Markov property and its corresponding transition kernel is
\begin{equation*}
\overline{Q}(s;\diff \tilde{s}):=\frac{1}{B(\alpha,\beta)}\frac{\varphi(\tilde{s})}{\varphi(s)}\left(\frac{\tilde{s}}{s}\right)^{\alpha}\left(1+\frac{\tilde{s}}{s}\right)^{-(\alpha+\beta)} e^{-\tilde{s}} \frac{\diff \tilde{s}}{\tilde{s}}.
\end{equation*}
Moreover, using~\eqref{initialxy}, the joint distribution of $(X(0)/Z(0),Y(0)/X(0))$ is
\begin{align}\label{eq:jointRW}
\frac{1}{\Gamma(\alpha)}\frac{z^{-\alpha}}{\varphi(z)} u_{1}^{-\beta}u_{2}^{-\alpha}\exp\left\{-\left(\frac{1}{u_1}+\frac{1}{u_2}+\frac{1}{z}\frac{1}{u_{1}u_{2}}\right)\right\}\frac{\diff u_1}{u_1}\frac{\diff u_2}{u_2}.
\end{align}
Notice that $\varphi(z) \sim z^{-\alpha} \Gamma(\beta)$ as $z\to\infty$.
It is then clear from~\eqref{eq:jointRW} that, under this limit, $X(0)/Z(0)$ and $\mathfrak{A}(0):=Y(0)/X(0)$ in~\eqref{eq:randomwalk} are asymptotically independent inverse gamma variables with parameters $\beta$ and $\alpha$, respectively.
In particular, $R$ will have the law of a multiplicative random walk with inverse gamma initial state of parameter $\beta$ and Beta type II independent increments of parameters $\alpha$ and $\beta$ (the Beta type II distribution of parameters $\alpha$ and $\beta$ can be defined as the law of the ratio between a gamma variable of parameter $\alpha$ and another independent gamma variable of parameter $\beta$).
To sum up, one arrives at the following result:

\begin{theorem}\label{thm:scalar}
Let $R=(R(n),n\ge0)$ be a multiplicative random walk defined as follows: the initial state $R(0)$ has the inverse gamma distribution of parameter $\beta$, and the increments $R(n)/R(n-1)$, $n\geq 1$, are independent random variables with the Beta type II distribution of parameters $\alpha$ and $\beta$.
Then, the process $S(n):=\big(\sum_{k=0}^{n-1}R(k)\big)^{-1}R(n)\big(\sum_{k=0}^{n}R(k)\big)^{-1}$, $n\ge1$, is Markov with transition kernel $\overline{Q}$.
\end{theorem}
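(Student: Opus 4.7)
The plan is to reduce the claim to the geometric RSK dynamics on $(X,Y,Z)$ recalled above and then obtain the stated Markov property of $S$ via a limiting procedure. The starting point is the identity~\eqref{eq:MYtype} for $S(n)$ in terms of $R$, combined with the coupling $S(n) = Z(n)/Y(n-1)$ that holds under the geometric RSK dynamics whenever $R$ is realised through~\eqref{eq:randomwalk}.

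First, I would fix an auxiliary parameter $z>0$ and work with $Z(0):=z$ and $(X(0),Y(0))$ distributed according to~\eqref{initialxy}. By the results of~\cite{COSZ}, under this initial law the pair $(Y,Z)$ is Markov with an explicit transition kernel $\Pi_{Y,Z}$. I would then establish the following auxiliary statement: the process $S$ is Markov with transition kernel $\overline{Q}$ (the one displayed in the theorem) under this initial distribution. The cleanest way to organise the computation is via an intertwining in the spirit of the matrix argument announced in Section~\ref{subsec:ourContributions}: produce a Markov kernel $\overline{K}(s;\cdot)$ encoding the conditional law of $(Y(n-1),Z(n))$ given $S(n)=s$, and verify $\overline{K}\,\Pi_{Y,Z} = \overline{Q}\,\overline{K}$. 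The Markov function criterion recalled in Appendix~\ref{app:MarkovFunctions} then yields the Markov property of $S$ with the desired kernel $\overline{Q}$ under this special initial law, provided the initial distribution of $(Y,Z)$ projects through $\overline{K}$ to a concrete law on $S$.

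Second, I would pass to the limit $z\to\infty$ to convert this auxiliary statement into the clean statement of the theorem. Indeed, under the initial distribution~\eqref{initialxy}, the ratios $R(n)/R(n-1)=\mathfrak{B}(n)/\mathfrak{A}(n)$ for $n\ge1$ are already i.i.d.\ Beta type II of parameters $\alpha,\beta$; what depends on $z$ is only the joint law~\eqref{eq:jointRW} of $(R(0),\mathfrak{A}(0)) = (X(0)/Z(0),Y(0)/X(0))$. Using the asymptotics $\varphi(z)\sim z^{-\alpha}\Gamma(\beta)$ as $z\to\infty$, the density~\eqref{eq:jointRW} factorises in the limit into a product of independent inverse gamma densities of parameters $\beta$ and $\alpha$. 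Consequently, the law of the whole process $R$ converges to the law of the random walk described in the statement of the theorem, and $S$ converges in law accordingly. Since $\overline{Q}$ does not depend on $z$, one can pass to the limit in the Chapman--Kolmogorov identities that characterise the Markov property of $S$, obtaining the claimed Markov property under the target initial law.

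The main obstacle is the intertwining verification in the first step: it requires explicit manipulation of integrals involving the function $\varphi$ of~\eqref{eq:phi_d=1} and careful bookkeeping with the transition kernel of $(Y,Z)$ from~\cite{COSZ}. A secondary technical point is that the convergence of $R$ in the second step only gives convergence of finite-dimensional distributions, but this is sufficient to transfer the Markov property (with its specific, $z$-independent transition kernel) to the limit. Once the intertwining identity is in hand, the remainder is a routine limiting argument.
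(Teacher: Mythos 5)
Your proposal follows the motivational route sketched in Section~\ref{sec:scalarCase} of the paper, but the paper does not actually prove Theorem~\ref{thm:scalar} this way: it proves the general matrix result Theorem~\ref{R-P} via a direct intertwining between the kernel $\Pi$ of the bivariate process $(R,A)$ and the kernel $Q$ for $S$, and then recovers Theorem~\ref{thm:scalar} as the $d=1$ specialisation. The computations you would need in your first step are precisely the ``few computations (which we omit)'' mentioned in the paper, so the structure you are proposing is the heuristic that \emph{motivated} the paper rather than its proof.

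Beyond the difference in route, there is a genuine structural gap in your first step. You propose a kernel $\overline{K}(s;\cdot)$ encoding the conditional law of $(Y(n-1),Z(n))$ given $S(n)=s$ and an intertwining $\overline{K}\,\Pi_{Y,Z}=\overline{Q}\,\overline{K}$ with the COSZ kernel $\Pi_{Y,Z}$. But $\Pi_{Y,Z}$ governs the transition from $(Y(n),Z(n))$ to $(Y(n+1),Z(n+1))$, while $S(n)=Z(n)/Y(n-1)$ mixes time indices: it is \emph{not} a measurable function of the current state $(Y(n),Z(n))$ of the autonomous chain. Consequently the composite $\overline{K}\,\Pi_{Y,Z}$ has a domain/range mismatch, and the Markov functions criterion of Appendix~\ref{app:MarkovFunctions} cannot be applied directly in this form. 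One would either need to verify that the lagged pair $(Y(n-1),Z(n))$ is itself autonomously Markov under the COSZ initial law (which is far from obvious and would require knowing the conditional law of $X(n-1)$ given $(Y(n-1),Z(n))$), or change variables. This is exactly why the paper works with the pair $(R(n),A(n))$, of which $S(n)=\phi(R(n),A(n))$ \emph{is} a genuine function, as in~\eqref{eq:MarkovFunction}. A secondary inaccuracy: you write that the ratios $R(n)/R(n-1)=\mathfrak{B}(n)/\mathfrak{A}(n)$ for $n\ge1$ are already i.i.d.\ Beta type II at finite $z$. The correct identity is $R(n)/R(n-1)=\mathfrak{B}(n)/\mathfrak{A}(n-1)$, and for $n=1$ the factor $\mathfrak{A}(0)=Y(0)/X(0)$ is \emph{not} inverse gamma of parameter $\alpha$ under~\eqref{initialxy}; only the ratios with $n\ge2$ are Beta type II for finite $z$, with the first one becoming so only in the limit $z\to\infty$. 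Your limiting argument in the second step is reasonable in spirit (finite-dimensional convergence suffices to pass the Chapman--Kolmogorov factorisations to the limit, given continuity of $\overline{Q}f$ for bounded continuous $f$), but it only becomes available once the first step is repaired.
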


In the next sections, we will consider a matrix generalisation of the above result.
In particular, Theorem~\ref{thm:scalar} will follow from Theorem~\ref{R-P} by taking the matrix dimension to be $d=1$.
We will provide a direct proof via some \emph{intertwining relations} and the theory of Markov functions.
The latter `algebraic' approach is convenient because of the non-commutative nature of the variables involved in the matrix setting.


\section{Preliminaries}
\label{sec:preliminaries}

In this section, we present some facts and notations about positive definite matrices and introduce the matrix measures and random walks that we are interested in.
For background and details, we refer the reader to~\cite{GN, HJ, Ma, T}.
We provide more details on these preliminaries in Appendices~\ref{app:InvRW} and~\ref{app:WishartBeta}.

\subsection{Positive definite matrices}
\label{subsec:matrices}

Let $\pos_d$ denote the set of all $d\times d$ real symmetric positive definite matrices.
For $x\in\pos_{d}$, we denote by $|x|$ the determinant of $x$ and by $\tr[x]$ its trace.
Any $x\in\pos_{d}$ has $d$ (strictly) positive real eigenvalues, so we denote by $\lambda_{\max}(x)$ and $\lambda_{\min}(x)$ its maximum and minimum eigenvalues, respectively.
For arbitrary $x,y\in\pos_{d}$, we have:
\begin{align}
\label{lambdasum}
\lambda_{\min}(x) + \lambda_{\min}(y)
&\leq \lambda_{\min}(x+y)
\leq \lambda_{\max}(x+y)
\leq \lambda_{\max}(x)+\lambda_{\max}(y);\\
\label{lambdaproduct}
\lambda_{\min}(x)\lambda_{\min}(y)
&\leq \lambda_{\min}(xy)
\leq \lambda_{\max}(xy)
\leq\lambda_{\max}(x)\lambda_{\max}(y).
\end{align}

We will consider $\pos_d$ as equipped with the Borel topology, induced by any norm.
Thus, for a sequence $\{x_{n},n\ge0\}\subset\pos_{d}$, we have: $x_{n}$ converges to the zero matrix $0$, as $n\to\infty$, if and only if $\lambda_{\max}(x_{n})\to0$ if and only if $\tr[x_{n}]\to0$.
We will say that $x_{n}\to\infty$, as $n\to\infty$, if and only if $x_{n}^{-1}\to0$ if and only if $\lambda_{\min}(x_{n})\to\infty$.

\subsection{Integration on $\pos_d$}
\label{subsec:integration}

Let $\GL_{d}$ denote the group of $d\times d$ invertible matrices.
The mapping $\GL_d\times \pos_d \to \pos_d$, $(a,x)\mapsto a^{\top} xa$, defines an action of $\GL_{d}$ on $\pos_{d}$, where $a^{\top}$ denotes the transpose of the matrix $a$.
The $\GL_{d}$-\emph{invariant} (under the group action) measure $\mu$ on $\pos_d$ is defined, writing $x=(x_{ij})_{1\le i,j\le d}$, by
\begin{align*}
\mu(\diff x):=|x|^{-(d+1)/2}\prod_{1\le i\le j\le d}\diff x_{ij},
\end{align*}
where $\diff x_{ij}$ is the Lebesgue measure on $\mathbb{R}$.
The measure $\mu$ has also the property that, if $z=x^{-1}$, then $\mu(\diff x)=\mu(\diff z)$.

\subsection{Orthogonally invariant distributions}
\label{subsec:orthInvariant}

Let $\ort_{d}$ be the group of $d\times d$ orthogonal matrices.
We say that a random matrix $X$ in $\pos_{d}$ (equivalently, its distribution) is $\ort_{d}$-\emph{invariant} if $k^{\top}Xk\stackrel{\mathrm{d}}{=}X$, for any $k\in\ort_{d}$, where $\stackrel{\mathrm{d}}{=}$ denotes equality in distribution.

Below we define some families of probability distributions on $\pos_d$, all of which are $\ort_{d}$-invariant: the Wishart and matrix Beta distributions.
In Appendix~\ref{app:WishartBeta} we collect some important relations between them that we use throughout this work.

Let $\alpha>\frac{d-1}{2}$ and let $\Gamma_{d}$ be the $d$-variate gamma function, i.e.\
\begin{align}
\label{eq:gammaFn}
\Gamma_{d}(\alpha):=\int_{\pos_d}|x|^{\alpha}e^{-\tr[x]}\mu(\diff x)=\pi^{\frac{d(d-1)}{4}} \prod_{k=1}^d \Gamma\left(\alpha- \frac{k-1}{2} \right).
\end{align}
We denote by $\Wish_{d}(\alpha)$ the \emph{Wishart distribution} on $\pos_d$ with parameter $\alpha$, defined by
\begin{align}\label{Wishart}
\Gamma_{d}(\alpha)^{-1}|x|^{\alpha}e^{-\tr[x]}\mu(\diff x).
\end{align}
We denote by $\invWish_{d}(\alpha)$ the \emph{inverse Wishart distribution} with parameter $\alpha$, i.e.\
\begin{align}\label{inverseWish}
\Gamma_{d}(\alpha)^{-1}|x|^{-\alpha}e^{-\tr[x^{-1}]}\mu(\diff x).
\end{align}
The distribution~\eqref{Wishart} is also known as (matrix) \emph{gamma distribution} since, when $d=1$, it defines the (real) gamma distribution with shape parameter $\alpha$ and scale parameter $1$.  

For $\alpha>\frac{d-1}{2}$ and $\beta>\frac{d-1}{2}$, let $B_d$ be the $d$-variate beta function
\begin{align}
\label{eq:betaFn}
B_d(\alpha,\beta)
:=\frac{\Gamma_{d}(\alpha)\Gamma_{d}(\beta)}{\Gamma_{d}(\alpha+\beta)}
=\int_{\pos_d}|x|^{\alpha}|I+x|^{-(\alpha+\beta)} \mu(\diff x),
\end{align}
where $I$ is (as always from now on) the $d\times d$ identity matrix.
We denote by $\BetaI_{d}(\alpha,\beta)$ the (matrix) \emph{Beta type I distribution} on $\pos_{d}$ with parameters $\alpha$ and $\beta$, which is
\begin{align}\label{Beta}
B_d(\alpha,\beta)^{-1}|x|^{\alpha}|I-x|^{\beta-(d+1)/2}\mathbb{1}_{\pos_{d}}(I-x)\mu(\diff x).
\end{align}
We denote by $\invBetaI_d(\alpha,\beta)$ the (matrix) \emph{inverse Beta type I distribution}, i.e.\ the distribution of a matrix $X$ such that $X^{-1}$ has the $\BetaI_{d}(\alpha,\beta)$ distribution.
Finally, we denote by $\BetaII_{d}(\alpha,\beta)$ the (matrix) \emph{Beta type II distribution} with parameters $\alpha$ and $\beta$, which is 
\begin{align}\label{BetatypeII}
B_d(\alpha,\beta)^{-1}|x|^{\alpha}|I+x|^{-(\alpha+\beta)}\mu(\diff x).
\end{align}
The latter is also known as (matrix) \emph{Beta prime distribution}.

\subsection{Invariant random walks on $\pos_{d}$}
\label{subsec:InvRW}

We now deal with $\GL_d$-invariant multiplicative random walks on positive definite matrices with $\ort_d$-invariant increments.
We provide various constructions, which are suited to our purposes.
The equivalence of such constructions is stated in Proposition~\ref{prop:invRW}, which, to the best of our knowledge, cannot be found in the literature as such; we therefore provide a proof of it in Appendix~\ref{app:InvRW}.
On the other hand, for more comprehensive accounts of the literature on products of random matrices, we refer the reader to standard texts such as~\cite{BL, Furman}; see also~\cite{CL2}.

Notice that, for $x,y\in\pos_d$, the usual matrix product $xy$ has positive eigenvalues, but is not necessarily symmetric, hence it may not belong to $\pos_d$.
We will define a sort of 'symmetrised product' that lives in $\pos_d$, recalling from Section~\ref{subsec:integration} that $a^\top x a \in\pos_d$ for all $x\in\pos_d$ and $a\in\GL_d$.
Let $w$ be any measurable function $w\colon \pos_d \to \GL_d$ such that
\begin{equation}
\label{eq:splitFn}
x = w(x)^{\top} w(x)
\qquad\quad
\text{for all } x\in \pos_d.
\end{equation}
Notable examples of such functions are:
\begin{itemize}
\item the \emph{square root function} $x \mapsto b$, which maps $x$ to the unique $b\in\pos_d$ such that $bb=x$ ($b$ is called the \emph{square root} of $x$ and denoted by $x^{1/2}$);
\item the \emph{Cholesky function} $x \mapsto u$, where $u$ is the unique upper triangular matrix with positive diagonal entries such that $x=u^{\top} u$ (the latter equality is usually referred to as the \emph{Cholesky decomposition} of $x$).
\end{itemize}
For any choice of the function $w$ satisfying~\eqref{eq:splitFn} and for any $y\in\pos_d$, we may now define a `symmetrised product operation' by $y$ in $\pos_d$ as
\begin{equation}
\label{eq:mult}
\mult_y\colon \pos_d \to \pos_d ,
\qquad
\mult_y(x) := w(y)^{\top} x\, w(y), \qquad x\in\pos_{d}.
\end{equation}

Let $Y=(Y(n), n\geq 0)$ be a (not necessarily time-homogeneous) Markov process in $\pos_{d}$.
We will say that $Y$ is \emph{$\GL_d$-invariant} if, for every $n$, the time-$n$ transition kernel of the process $(a^{\top}Y(n)a, n\geq 0)$ does not depend on the choice of $a\in\GL_d$ (the initial distribution may, however, depend on $a$).
In Appendix~\ref{app:InvRW} we will prove:

\begin{proposition}
\label{prop:invRW}
Let $(X(n), n\ge 1)$ be a family of independent and $\ort_{d}$-invariant random matrices in $\pos_d$ and let $M$ be any random matrix in $\pos_d$ that is independent of $(X(n), n\ge 1)$.
Define $\RW=(\RW(n), n\geq 0)$ and $\RWmod = (\RWmod(n), n\geq 0)$ by
\begin{align}
\label{eq:RW}
\RW(n) &:=\mult_{\RW(n-1)}(X(n)), \quad n\ge1, \quad\text{with}\quad \RW(0):=M,\\
\label{eq:RWmod}
\RWmod(n)
&:= \mult_{M}\circ \mult_{X(1)}\circ\dots\circ \mult_{X(n)}(I), \quad n\ge0 .
\end{align}
Then, $\RW$ and $\RWmod$ are $\GL_d$-invariant Markov processes with the same law.
Moreover, their law is the same for any choice of the function $w$ satisfying~\eqref{eq:splitFn}.
\end{proposition}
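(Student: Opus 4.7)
The plan rests on the following elementary fact about square roots in $\GL_d$: if $b_1, b_2 \in \GL_d$ satisfy $b_1^\top b_1 = b_2^\top b_2$, then $b_1 = k\, b_2$ for some $k \in \ort_d$; indeed, $c := b_1 b_2^{-1}$ must satisfy $c^\top c = I$. I would first record two consequences of this lemma: (i) any two measurable maps $w, \tilde w$ satisfying~\eqref{eq:splitFn} are related by $\tilde w(y) = k(y)\, w(y)$ for some $k(y) \in \ort_d$; and (ii) for any $a \in \GL_d$ and $x \in \pos_d$, one has $w(a^\top x a) = k(x,a)\, w(x)\, a$ for some $k(x,a) \in \ort_d$, since both $w(a^\top x a)$ and $w(x)\, a$ are square roots of $a^\top x a$.

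Next I would treat $\RW$ directly. By definition it is Markov, with one-step transition kernel $P(y, \cdot)$ equal to the law of $w(y)^\top X(1) w(y)$. To show $P$ is independent of the choice of $w$, substitute $\tilde w = k\cdot w$ and rewrite
\[
\tilde w(y)^\top X(1)\, \tilde w(y) = w(y)^\top \bigl(k(y)^\top X(1) k(y)\bigr) w(y),
\]
whose law equals that of $w(y)^\top X(1) w(y)$ by $\ort_d$-invariance of $X(1)$. For $\GL_d$-invariance, fix $a \in \GL_d$ and put $Y(n) := a^\top \RW(n) a$. Applying (ii) with $x = Y(n-1)$ and scaling matrix $a^{-1}$, so that $\RW(n-1) = a^{-\top} Y(n-1) a^{-1}$, yields $w(\RW(n-1))\, a = k_n\, w(Y(n-1))$ with $k_n \in \ort_d$ measurable in $M, X(1), \dots, X(n-1)$; substituting into the recursion gives
\[
Y(n) = w(Y(n-1))^\top \bigl(k_n^\top X(n) k_n\bigr) w(Y(n-1)),
\]
which, conditionally on the past, has law $P(Y(n-1), \cdot)$ by $\ort_d$-invariance of $X(n)$.

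For $\RWmod$, I would unfold the compositional definition by a routine induction on the depth of nesting to obtain $\RWmod(n) = W_{n-1}^\top X(n) W_{n-1}$ for $n \geq 1$, where $W_0 := w(M)$ and $W_n := w(X(n))\, W_{n-1}$. A one-line check yields $W_n^\top W_n = \RWmod(n)$, so $W_n$ is a square root of $\RWmod(n)$, and (i) furnishes $\tilde k_n \in \ort_d$ (measurable in $M, X(1), \dots, X(n)$) with $w(\RWmod(n)) = \tilde k_n\, W_n$. The one-step relation then reads
\[
\RWmod(n+1) = w(\RWmod(n))^\top \bigl(\tilde k_n X(n+1) \tilde k_n^\top\bigr) w(\RWmod(n)),
\]
and the same $\ort_d$-invariance argument produces the transition kernel $P$. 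Since $\RWmod(0) = \mult_M(I) = M = \RW(0)$, I conclude $\RWmod \stackrel{\mathrm{d}}{=} \RW$ as processes; $\GL_d$-invariance of $\RWmod$ and independence of both laws on $w$ are then immediate.

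The main obstacle I expect is the recurring \emph{square-root replacement}: at every step one must pass from a product of $w$-factors to a $w$-image of some conjugate matrix, at the cost of an orthogonal factor that has to be absorbed into an $X(n)$ via its $\ort_d$-invariance. Keeping track of measurability, so that each $k_n$ or $\tilde k_n$ is independent of the $X(n)$ being absorbed, is the only point that needs care; otherwise the proof is a formal manipulation of the definitions.
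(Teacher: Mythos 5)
Your proof is correct and follows essentially the same approach as the paper's: both rest on the observation that any two matrices $b_1, b_2 \in \GL_d$ with $b_1^\top b_1 = b_2^\top b_2$ differ by a left orthogonal factor, and both absorb the resulting orthogonal matrix into an independent $\ort_d$-invariant increment $X(n)$ by conditioning on the past. The only cosmetic differences are that the paper packages the absorption step as a standalone lemma and proves $\GL_d$-invariance of $\RWmod$ directly via the auxiliary sequences $A(n)$, $B(n)$ rather than transporting it from $\RW$ after establishing equality in law, and that the increments $X(n)$ are only assumed independent (not i.i.d.), so the transition kernel $P$ in your argument should carry an $n$-index.
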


\begin{rem}
Notice that $\RWmod$ has a closed form, while $\RW$, in general, does not.
For instance, if $w$ is the square root function $w(x):=x^{1/2}$, then the processes $\RW$ and $\RWmod$ are given by
\begin{align*}
\RW(n) &:=\RW(n-1)^{1/2} X(n) \RW(n-1)^{1/2},\quad n\ge1, \quad\text{with}\quad \RW(0):=M, \\
\RWmod(n)
&:= M^{1/2} X(1)^{1/2}\cdots X(n-1)^{1/2} X(n) X(n-1)^{1/2} \cdots X(1)^{1/2}  M^{1/2},\quad n\ge0 .
\end{align*}
\end{rem}

\begin{rem}
In the commutative case $d=1$, no matter the choice of $w$, we have
\[
\RW(n)=\RWmod(n)=M\cdot X(1)\cdots X(n) .
\]
\end{rem}
 
Throughout this article, a \emph{$\GL_d$-invariant random walk on $\pos_d$} with initial state $M$ and increments $(X(n), n\ge 1)$ will be referred to as any stochastic process whose law is determined by Proposition~\ref{prop:invRW}.
Each of the two basic constructions, \eqref{eq:RW} and~\eqref{eq:RWmod}, can be based on any multiplication operation of type~\eqref{eq:mult} and thus, ultimately, on any function $w$ satisfying~\eqref{eq:splitFn}.
In the special case in which $w$ is the Cholesky function, the Markov processes $\RW$ and $\RWmod$ are not only identical in law, but even almost surely; this fact can be easily verified by induction, using the uniqueness of the Cholesky decomposition.

All the results of Sections~\ref{sec:mainResults} and~\ref{sec:stochasticEqns} are of a distributional nature, hence they hold for any of the constructions of Proposition~\ref{prop:invRW}.
Only the \emph{proof methods} of Section~\ref{sec:stochasticEqns}, where the connection with stochastic matrix recursions is discussed, will be explicitly based on the second construction $\RWmod$.
Finally, in Appendix~\ref{app:lyapunovExponents}, some explicit computations of so-called Lyapunov exponents simplify greatly when using the Cholesky function, even though the results we present do hold in full generality.

\section{Main results}
\label{sec:mainResults}


In this section, we establish a Matsumoto-Yor type theorem for a multiplicative random walk on $\pos_d$ with Beta type II distributed increments (Theorem~\ref{R-P}) and also prove an analogue of the classical Dufresne identity (Theorem~\ref{Dufresne}).

Throughout, we consider kernels between measurable spaces, which can be equivalently viewed as integral operators.
We refer to Appendix~\ref{app:MarkovFunctions} for standard notions and notational conventions about kernels.
The spaces we consider will be usually (subsets of) $\pos_{d}^{n}$, the $n$-ary cartesian power of $\pos_{d}$, with their Borel sigma-algebras.
We denote by $\delta(x; \diff y)$ the Dirac measure at $x$.

\subsection{Matsumoto-Yor type theorem}
Let $\alpha>\frac{d-1}{2}$ and $\beta>\frac{d-1}{2}$. Let $(X(n), n\geq1)$ be a family of independent and identically distributed (i.i.d.) random matrices in $\pos_{d}$ with the Beta type II distribution $\BetaII_{d}(\alpha,\beta)$. 

Consider a $\GL_{d}$-invariant random walk $R=(R(n), n\ge0)$ on $\pos_{d}$ with increments $(X(n),n\ge1)$ and initial state $R(0)$, independent of $(X(n), n\geq1)$ and with the inverse Wishart $\invWish_{d}(\beta)$ distribution.
The transition kernel of $R$ will then be
\begin{equation}
\label{kernelofR}
P(r;\diff \tilde{r}):=B_d(\alpha,\beta)^{-1}|r^{-1}\tilde{r}|^{\alpha}|I+r^{-1}\tilde{r}|^{-(\alpha+\beta)}\mu(\diff \tilde{r}), \qquad r,\tilde{r}\in\pos_d.
\end{equation}
Define also the sub-Markov (or killed) kernel
\begin{equation}
\label{kernelQ}
Q(s;\diff \tilde{s}):= P(s;\diff \tilde{s}) e^{-\tr[\tilde{s}]}, \qquad s,\tilde{s}\in\pos_d.
\end{equation}

Consider now the running sum of $R$, i.e.\
\begin{align}\label{runningsum}
A(n):=\sum_{k=0}^{n}R(k),\qquad n\ge0,
\end{align}
and define, in analogy to~\eqref{eq:MYtype}, the `Matsumoto-Yor type process'
\begin{align}\label{functional}
S(n):=A(n-1)^{-1}R(n)A(n)^{-1},\quad n\geq1.
\end{align}
The first main result of this work (see Theorem~\ref{thm:MatYor_intro} below) states that the process $S=(S(n),n\ge1)$ is Markov.

We first need to define additional kernels, whose interpretation in terms of $R$, $A$ and $S$ will be shortly given in Remark~\ref{conditionaldistribution}:
\begin{align}\label{newkernelK}
k(s;\diff a)&:=|a|^{\alpha-\beta}|I+sa|^{-\alpha}e^{-\tr[a^{-1}]}\mu(\diff a), && s,a\in\pos_d, \\
\label{kernelK}
K(s; \diff r \diff a)&:=k(s;\diff a)\,\delta(a(s^{-1}+a)^{-1}a; \diff r), && s,r,a\in\pos_d.
\end{align}
Their (common) normalisation
\begin{equation}\label{functionphi}
\varphi(s):=\int_{\pos_d}k(s;\diff a)
=\int_{\pos_d^2} K(s; \diff r \diff a)
\end{equation}
is finite for any $s\in\pos_d$ and reduces to~\eqref{eq:phi_d=1} in the case $d=1$.

\begin{rem}
The integral $\varphi$ is actually finite under the more general hypotheses $\alpha\in\mathbb{R}$ and $\beta>\frac{d-1}{2}$.
In fact, for any such choice of parameters, we have
\begin{align*}
\varphi(s)=|s|^{\beta-\alpha}\,\Psi(\beta,\beta-\alpha+(d+1)/2;s),
\end{align*}
where $\Psi$ is the confluent hypergeometric function of type II of matrix argument (see~\cite[Definition 1.6.3]{GN}).
Moreover, we can write
\begin{align*}
\varphi(s)=|s|^{\frac{1}{2}(\beta-\alpha-\frac{d+1}{2})}e^{\frac{1}{2}\tr[s]}\,W_{\frac{d+1}{4}-\frac{\alpha+\beta}{2},\frac{\beta-\alpha}{2}}(s),
\end{align*}
where $W$ is the classical Whittaker function of matrix argument (see~\cite[Section 5.2.5]{Ma}).
\end{rem}

We now define the following modifications of the kernels $K$ and $Q$:
\begin{align}
\label{eq:normalisedKernels}
\overline{K}(s;\diff r \diff a):=\frac{1}{\varphi(s)} K(s; \diff r \diff a), \qquad\quad
\overline{Q}(s; \diff \tilde{s}):=\frac{\varphi(\tilde{s})}{\varphi(s)} Q(s;\diff \tilde{s}).
\end{align}
By~\eqref{functionphi}, $\overline{K}$ is simply the normalised version of $K$.
On the other hand, $\overline{Q}$ will turn out to be also Markov (in fact, by Corollary~\ref{eigenequa} below, $\overline{Q}$ will be the Doob transform of the sub-Markov kernel $Q$ through the $Q$-harmonic function $\varphi$).

Finally, define the measure
\begin{align}
\label{kerinf2}
\eta(\diff \tilde{s})&:=B_d(\alpha,\beta)^{-1}\Gamma_{d}(\beta)^{-1}\varphi(\tilde{s})\,|\tilde{s}|^{\alpha}e^{-\tr[\tilde{s}]}\mu(\diff \tilde{s}) .
\end{align}
It is not difficult to check that the density of $\eta$ with respect to $\mu$ is the pointwise limit of the density of $\overline{Q}(s; \cdot)$ as $s\to\infty$.
We will show in Lemma~\ref{lemma:etaProbability} that there is no loss of probability mass under this limiting procedure.

\begin{theorem}\label{R-P}
The process $S=(S(n),n\ge1)$ defined  in~\eqref{functional} is a time-homogeneous Markov process (in its own filtration) in $\pos_d$ with initial distribution $\eta$ and transition kernel $\overline{Q}$.
Moreover, for any bounded measurable function $f:\pos_d^{2}\to\mathbb{R}$ and $n\ge1$, we have
\begin{align}
\label{eq:R-P_conditional}
\mathbb{E}\left[f(R(n),A(n)) \mid S(1),\dots ,S(n-1),S(n)\right]=\overline{K}\,f (S(n))\qquad\text{a.s.}
\end{align}
\end{theorem}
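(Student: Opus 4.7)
The plan is to apply the Markov functions criterion of Appendix~\ref{app:MarkovFunctions} to the Markov chain $(R(n),A(n))_{n\ge 0}$ on $\pos_d^2$, whose transition kernel is
\[
\Pi((r,a);\diff \tilde r\,\diff \tilde a)=P(r;\diff \tilde r)\,\delta(a+\tilde r;\diff \tilde a),
\]
together with the deterministic map $\phi(r,a):=(a-r)^{-1}r\,a^{-1}$, defined on $\{(r,a)\in\pos_d^2:a-r\in\pos_d\}$, for which $S(n)=\phi(R(n),A(n))$ for every $n\ge 1$. The three ingredients to be established are then: (i) $\overline{K}(s;\cdot)$ is a probability kernel concentrated on $\phi^{-1}(s)$; (ii) the intertwining $\overline{K}\,\Pi=\overline{Q}\,\overline{K}$ holds; (iii) $(R(1),A(1))$ conditional on $S(1)$ is distributed as $\overline{K}(S(1);\cdot)$ and $S(1)\sim\eta$.

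For (i), finiteness of $\varphi(s)$ on $\pos_d$ is ensured by the Whittaker representation noted in the remark following~\eqref{functionphi}, while the support property reduces to the algebraic check $(a-r)^{-1}r\,a^{-1}=s$ for $r:=a(s^{-1}+a)^{-1}a$, which follows in one line from the identity $I-(s^{-1}+a)^{-1}a=(s^{-1}+a)^{-1}s^{-1}$.

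Step (ii) is the technical heart of the argument. Cancelling the common factor $1/\varphi(s)$, the intertwining is equivalent to the measure identity
\[
k(s;\diff a)\,P\bigl(a(s^{-1}+a)^{-1}a;\diff \tilde r\bigr)\Big|_{\tilde a=a+\tilde r}
\;=\;Q(s;\diff \tilde s)\,k(\tilde s;\diff \tilde a)\Big|_{\tilde r=\tilde a(\tilde s^{-1}+\tilde a)^{-1}\tilde a}
\]
on $\pos_d^2$. I would verify it via the explicit bijection $(a,\tilde r)\leftrightarrow(\tilde s,\tilde a)$ determined by $\tilde a=a+\tilde r$ together with $\tilde s^{-1}=\tilde a\,\tilde r^{-1}a=a\,\tilde r^{-1}\tilde a$, the equality of the latter two expressions (a short manipulation using $\tilde a-\tilde r=a$) furnishing the required symmetry of $\tilde s^{-1}$. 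After matching the exponentials $e^{-\tr[a^{-1}]}$ on the left with $e^{-\tr[\tilde s]}$ on the right, the remaining determinantal factors coming from $k$, $P$ and $Q$ must combine with the Jacobian of this bijection to produce the stated identity. Working out that Jacobian on $\pos_d\times\pos_d$ with respect to $\mu\otimes\mu$, and verifying that every power of $|a|$, $|\tilde a|$, $|\tilde r|$, $|\tilde s|$ lines up, is the expected main obstacle.

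For (iii), I would write the joint law of $(R(0),R(1))$ as the product of the $\invWish_d(\beta)$ density of $R(0)$ and the transition $P(R(0);\diff R(1))$, change variables to $(R(1),A(1))$ via $R(0)=A(1)-R(1)$ (unit Jacobian with respect to $\mu$), and then reuse the Jacobian identity established in (ii) to factor the joint law of $(S(1),R(1),A(1))$ as $\eta(\diff\tilde s)\,\overline{K}(\tilde s;\diff r\,\diff a)$; integrating (ii) against $f\equiv 1$ additionally shows that $Q\varphi=\varphi$, so that $\overline{Q}$ is Markov and is realised as the Doob $h$-transform of the sub-Markov kernel $Q$ by $\varphi$. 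Combining (i)--(iii) with the Markov functions criterion of Appendix~\ref{app:MarkovFunctions} then yields simultaneously the Markov property of $S$ with transition kernel $\overline{Q}$ and initial distribution $\eta$, and the conditional identity~\eqref{eq:R-P_conditional}.
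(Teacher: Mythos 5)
Your proposal is correct and takes essentially the same route as the paper's proof: the Markov functions criterion of Appendix~\ref{app:MarkovFunctions}, verification of the intertwining $K\Pi=QK$ by changes of variable (your single bijection $(a,\tilde r)\leftrightarrow(\tilde s,\tilde a)$ with $\tilde a=a+\tilde r$ and $\tilde s^{-1}=\tilde a\,\tilde r^{-1}a$ is exactly the pair of substitutions the paper performs on the two sides), the support property of $\overline{K}$, the eigenequation $Q\varphi=\varphi$, and the initial-law identity $\lambda\Pi=\eta\overline{K}$. The only cosmetic difference is presentational, so no further comparison is needed.
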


\begin{rem}\label{conditionaldistribution}
Essentially, \eqref{eq:R-P_conditional} states that $\overline{K}$ should be interpreted as the conditional distribution of $(R(n), A(n))$ given $S(n)$, for any $n\ge1$.
\end{rem}

We will establish Theorem~\ref{R-P} via a classical criterion (reviewed in Appendix~\ref{app:MarkovFunctions}) for a function of a Markov process to be Markov itself.
Using~\eqref{runningsum} and~\eqref{functional}, the process $S$ may be written as
\begin{align}
S(n)=\phi(R(n),A(n)),\qquad n\ge1,
\end{align}
where $\phi$ is the function
\begin{equation}
\label{eq:MarkovFunction}
\phi\colon D\to \pos_d, \qquad 
\phi(r,a):=(a-r)^{-1}ra^{-1}
\end{equation}
on the domain $D:=\{(r,a)\in\pos_d^{2}: a-r\in\pos_d\}$.
Notice that $((R(n),A(n)),n\ge0)$ is a Markov process in $\pos_d^{2}$ with transition kernel
\begin{align}
\label{kernelPi}
\Pi(r,a;\diff \tilde{r}\diff \tilde{a})&:=P(r;\diff \tilde{r})\,\delta(a+\tilde{r}; \diff \tilde{a})
\end{align}
and initial distribution
\begin{align}
\label{kerinf}
\lambda(\diff r \diff a)&:=\Gamma_{d}(\beta)^{-1}|a|^{-\beta}e^{-\tr[a^{-1}]}\,\mu(\diff a)\,\delta(a;\diff r),
\end{align}
since $A(0)=R(0)\sim \invWish_{d}(\beta)$.
The key step in the proof is to establish an \emph{intertwining relation} between the kernels $\Pi$ and $Q$ through the `intertwining kernel' $K$:
\begin{proposition}\label{Int}
The following intertwining relation holds:
\begin{align}\label{intertw2}
K\,\Pi=Q\,K,
\end{align} 
where both sides are kernels from $\pos_d$ to $\pos_d^{2}$.
\end{proposition}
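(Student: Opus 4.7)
The plan is to verify $K\Pi=QK$ by writing both sides as explicit densities with respect to $\mu\otimes\mu$ on $\pos_d^2$ and matching them through a change of variables. After integrating out the Dirac factors on each side, the LHS density is naturally indexed by $(a,\tilde r)$, with $\tilde a=a+\tilde r$ and $r=a(s^{-1}+a)^{-1}a$ substituted, whereas the RHS density is naturally indexed by $(\tilde s,\tilde a)$, with $\tilde r=\tilde a(\tilde s^{-1}+\tilde a)^{-1}\tilde a$ substituted. The bridge between the two coordinate systems is the map
\[
T\colon (a,\tilde r)\longmapsto (\tilde s,\tilde a),\qquad \tilde a:=a+\tilde r,\qquad \tilde s:=a^{-1}\tilde r\,\tilde a^{-1}.
\]
The fact that $\tilde s\in\pos_d$ follows from the equivalent expression $\tilde s^{-1}=\tilde a\tilde r^{-1}\tilde a-\tilde a$, which is manifestly symmetric and positive definite whenever $\tilde a\succ\tilde r$; the inverse of $T$ is $a=\tilde a(I+\tilde s\tilde a)^{-1}$, $\tilde r=\tilde a\tilde s\tilde a(I+\tilde s\tilde a)^{-1}$.

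I then need to check that $T$ has trivial $\mu\otimes\mu$-Jacobian, i.e.\ $\mu(\diff\tilde s)\mu(\diff\tilde a)=\mu(\diff a)\mu(\diff\tilde r)$. Decomposing $T^{-1}$ at fixed $\tilde a$ into inversion, additive shift by $\tilde a$, another inversion, and $\GL_d$-conjugation by $\tilde a$ leaves a single non-trivial factor $|I+\tilde s\tilde a|^{-(d+1)/2}$ (coming from the shift). The accompanying change of variables $\tilde a\mapsto a=\tilde a-\tilde r$ at fixed $\tilde r$ yields the reciprocal factor $(|a|/|\tilde a|)^{(d+1)/2}=|I+\tilde s\tilde a|^{-(d+1)/2}$, using $|a|=|\tilde a|\cdot|I+\tilde s\tilde a|^{-1}$; the two contributions cancel.

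Granting the Jacobian computation, the equality of the two Lebesgue densities reduces to the two identities
\[
\tr[a^{-1}]=\tr[\tilde s]+\tr[\tilde a^{-1}]\qquad\text{and}\qquad |a|\,|\tilde a|\,|s+\tilde s|=|I+sa|\,|r+\tilde r|.
\]
The first is immediate: $\tr[\tilde s]+\tr[\tilde a^{-1}]=\tr[\tilde a^{-1}(a^{-1}\tilde r+I)]=\tr[\tilde a^{-1}a^{-1}\tilde a]=\tr[a^{-1}]$. The second, determinantal identity is the main obstacle, since $|r+\tilde r|$ admits no obvious simplification on its own.

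To establish it I would use the double factorisation
\[
a(s+\tilde s)\tilde a=as\tilde a+a\tilde s\tilde a=as\tilde a+\tilde r\qquad\text{and}\qquad as\tilde a+\tilde r=asa+(I+as)\tilde r=asa\bigl(I+r^{-1}\tilde r\bigr),
\]
where the last equality relies on the key relation
\[
r^{-1}=a^{-1}+a^{-1}s^{-1}a^{-1},
\]
read off immediately from $r=a(s^{-1}+a)^{-1}a$, which gives $(asa)^{-1}(I+as)=r^{-1}$. Taking determinants in the two chains of equalities yields $|a|\,|\tilde a|\,|s+\tilde s|=|asa|\cdot|I+r^{-1}\tilde r|=|a|^{2}|s|\cdot|r|^{-1}|r+\tilde r|$, and this equals $|I+sa|\,|r+\tilde r|$ thanks to $|r|=|a|^2|s|\cdot|I+sa|^{-1}$. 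After these substitutions all determinantal and exponential factors cancel on the two sides, proving $K\Pi=QK$.
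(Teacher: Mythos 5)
Your argument is correct and follows essentially the same route as the paper's proof: both sides are written as densities with respect to $\mu\otimes\mu$ and matched through the change of variables $\tilde a = a + \tilde r$, $\tilde r = \tilde a(\tilde s^{-1}+\tilde a)^{-1}\tilde a$; the paper is merely more terse about the concluding algebraic identities. The explicit factorization $a(s+\tilde s)\tilde a = asa(I+r^{-1}\tilde r)$ and the trace identity $\tr[a^{-1}]=\tr[\tilde s]+\tr[\tilde a^{-1}]$ correctly supply the ``few algebraic manipulations'' that the paper suppresses. One small slip in the Jacobian bookkeeping: in the triangular decomposition you should have $\mu(\diff\tilde s)=|I+\tilde s\tilde a|^{(d+1)/2}\mu(\diff\tilde r)$ (at fixed $\tilde a$) and $\mu(\diff\tilde a)=(|a|/|\tilde a|)^{(d+1)/2}\mu(\diff a)=|I+\tilde s\tilde a|^{-(d+1)/2}\mu(\diff a)$ (at fixed $\tilde r$); as written, the two factors you display are equal rather than reciprocal, so they would not cancel, but the intended computation and the conclusion $\mu(\diff\tilde s)\mu(\diff\tilde a)=\mu(\diff a)\mu(\diff\tilde r)$ are both right.
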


\begin{proof}
The proof is by direct calculation of both sides of~\eqref{intertw2}.
Using~\eqref{kernelK} and~\eqref{kernelPi}, for any bounded measurable function $f$ on $\pos_{d}^{2}$ we have
\begin{align*}
K\,\Pi\,f\,(s)&=\int_{\pos_{d}^{2}} K(s;\diff r \diff a)\,\int_{\pos_{d}^{2}}\Pi(r,a; \diff \tilde{r} \diff \tilde{a})\, f(\tilde{r},\tilde{a})\\
&=\int_{\pos_d}k(s;\diff a)\,\int_{\pos_{d}}P(a(s^{-1}+a)^{-1}a;\diff \tilde{r})\,f(\tilde{r},a+\tilde{r}).
\end{align*}
Note that both kernels $k$ and $P$ are absolutely continuous with respect to the reference measure $\mu$.
Thus, writing the corresponding densities and interchanging the order of integration, we have
\begin{align*}
K\,\Pi\,f\,(s)=B_d(\alpha,\beta)^{-1}&\int_{\pos_{d}}\mu(\diff \tilde{r})\,\int_{\pos_{d}}\mu(\diff a)\,|a|^{\alpha-\beta}\left|I+sa\right|^{-\alpha}e^{-\tr[a^{-1}]}\\
&\left|a^{-1}(s^{-1}+a)a^{-1}\tilde{r}\right|^{\alpha}\left|I+a^{-1}(s^{-1}+a)a^{-1}\tilde{r}\right|^{-(\alpha+\beta)}f(\tilde{r},a+\tilde{r}).
\end{align*}
Substituting the variable $a$ with $\tilde{a}:=a+\tilde{r}\in\pos_d$ and using the fact that
\begin{align*}
|a|^{(d+1)/2}\mu(\diff a)=|\tilde{a}|^{(d+1)/2}\mu(d \tilde{a}),
\end{align*}
after some algebraic manipulations we obtain
\begin{equation}
\label{eq:leftSide}
\begin{aligned}
K\,\Pi\,f\,(s)=B_d(\alpha,\beta)^{-1}|s|^{\beta}\int_{\pos_{d}}&\mu(\diff \tilde{r})\,|\tilde{r}|^{\alpha}\int_{\pos_{d}}\mu(\diff \tilde{a})\,\mathbb{1}_{\pos_d}(\tilde{a}-\tilde{r})\,e^{-\tr[(\tilde{a}-\tilde{r})^{-1}]}\\
&|s\tilde{a}+(\tilde{a}-\tilde{r})^{-1}\tilde{r}|^{-(\alpha+\beta)}|(\tilde{a}-\tilde{r})^{-1}\tilde{a}|^{(d+1)/2}\,f(\tilde{r},\tilde{a}).
\end{aligned}
\end{equation}

On the other side, again by~\eqref{kernelK}, we have
\begin{align*}
Q\, K\,f\,(s)=\int_{\pos_d}Q(s; \diff \tilde{s})\,\int_{\pos_{d}}k(\tilde{s};\diff \tilde{a})\,f(\tilde{a}(\tilde{s}^{-1}+\tilde{a})^{-1}\tilde{a},\tilde{a}).
\end{align*}
Similarly to the previous case, we write the densities of $Q$ and $k$ and then interchange the order of integration, obtaining
\begin{align}\label{eq:rightSide}
\begin{aligned}
Q\, K\,f\,(s)
= \, &B_d(\alpha,\beta)^{-1} \int_{\pos_d} \mu(\diff \tilde{a}) |\tilde{a}|^{\alpha-\beta} e^{-\tr[\tilde{a}^{-1}]}
\int_{\pos_d} \mu(\diff \tilde{s}) \left|s^{-1} \tilde{s}\right|^{\alpha} \\
&\left|I+s^{-1}\tilde{s}\right|^{-(\alpha+\beta)} e^{-\tr [\tilde{s}]} \left|I+\tilde{s}\tilde{s}\right|^{-\alpha} f\left(\tilde{a}(\tilde{s}^{-1}+\tilde{a})^{-1} \tilde{a}, \tilde{a}\right) .
\end{aligned}
\end{align}
Now, substitute the variable $\tilde{s}$ with
\begin{align*}
\tilde{r}:=\tilde{a}(\tilde{s}^{-1}+\tilde{a})^{-1}\tilde{a}=((\tilde{a}\tilde{s}\tilde{a})^{-1}+\tilde{a}^{-1})^{-1},
\end{align*}
or equivalently $\tilde{s}=\tilde{a}^{-1} (\tilde{r}^{-1}-\tilde{a}^{-1}) \tilde{a}^{-1}$.
Notice that $\tilde{s}\in\pos_d$ if and only if $\tilde{r}^{-1}-\tilde{a}^{-1}\in\pos_d$ if and only if $\tilde{a}-\tilde{r}\in\pos_d$.
Since the reference measure $\mu$ is $\GL_{d}$-invariant and also invariant under inversion (see Section~\ref{subsec:integration}), we have
\begin{align*}
|\tilde{a}\tilde{s}\tilde{a}|^{-(d+1)/2}\mu(\diff \tilde{s})=|\tilde{r}|^{-(d+1)/2}\mu(\diff \tilde{r}).
\end{align*}
Using these facts and carrying out a few algebraic manipulations, we obtain that~\eqref{eq:rightSide} agrees with~\eqref{eq:leftSide}, thus concluding the proof.
\end{proof}
 
\begin{corollary}\label{eigenequa}
We have the eigenfunction equation $\varphi = Q\varphi$.
\end{corollary}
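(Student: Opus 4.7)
The plan is to obtain the eigenfunction equation directly from the intertwining relation in Proposition~\ref{Int} by applying both sides to the constant function $\mathbf{1}\equiv 1$ on $\pos_d^{2}$. First I would observe that the kernel $\Pi$ from~\eqref{kernelPi} is Markov: $P(r;\cdot)$ is the transition kernel of the random walk $R$, hence a probability measure on $\pos_d$; and $\delta(a+\tilde{r};\cdot)$ is a probability measure on $\pos_d$ for each $(a,\tilde{r})$. Therefore $\Pi\mathbf{1}\equiv 1$ on $\pos_d^{2}$. Next, I would recall that the total mass of $K(s;\cdot)$ is precisely $\varphi(s)$ by~\eqref{functionphi}, and that $\varphi(s)$ is finite for every $s\in\pos_d$.

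With these two ingredients in hand, the computation is immediate. On the one hand,
\begin{equation*}
(K\Pi)\mathbf{1}(s) \;=\; K(\Pi\mathbf{1})(s) \;=\; K\mathbf{1}(s) \;=\; \varphi(s).
\end{equation*}
On the other hand,
\begin{equation*}
(QK)\mathbf{1}(s) \;=\; Q(K\mathbf{1})(s) \;=\; Q\varphi(s).
\end{equation*}
The associativity used in interchanging the order of integration is justified by Tonelli's theorem, since $K$, $\Pi$ and $Q$ are all positive kernels and $\varphi(s)<\infty$ guarantees finiteness of the iterated integrals. Combining the two displays with Proposition~\ref{Int} yields $\varphi(s)=Q\varphi(s)$ for every $s\in\pos_d$, which is the claim.

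There is essentially no obstacle to overcome: the corollary is a formal consequence of the intertwining $K\Pi=QK$ together with the stochasticity of $\Pi$ and the finiteness of $\varphi$. The only point that warrants a line of justification is the application of Tonelli, and this is covered by the remarks surrounding~\eqref{functionphi}. As a side benefit, this reading makes transparent why $\overline{Q}$ defined in~\eqref{eq:normalisedKernels} is Markov: it is precisely the Doob $h$-transform of the sub-Markov kernel $Q$ by the $Q$-harmonic function $\varphi$, which will be convenient in the subsequent proof of Theorem~\ref{R-P}.
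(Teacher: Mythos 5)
Your proof is correct and takes essentially the same route as the paper: apply both sides of the intertwining relation $K\Pi = QK$ to the constant function $\mathbf{1}$, then use $\Pi\mathbf{1}=\mathbf{1}$ and $K\mathbf{1}=\varphi$. The extra Tonelli remark is a reasonable bit of bookkeeping but adds nothing beyond what the paper leaves implicit.
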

\begin{proof}
By integrating the intertwining relation~\eqref{intertw2}, we have $K\,\Pi\, \mathbf{1} = Q\, K\, \mathbf{1}$, where $\mathbf{1}$ is the function that equals identically $1$ on $\pos_d^2$.
Noting that $\Pi \, \mathbf{1}=\mathbf{1}$ (as $\Pi$ is a Markov kernel) and $K\, \mathbf{1} = \varphi$ (by definition~\eqref{functionphi}), we arrive at the claim.
\end{proof}

Corollary~\eqref{eigenequa} ensures that $\overline{Q}$ is a Markov kernel.
The following normalised intertwining relation is an immediate consequence of~\eqref{eq:normalisedKernels} and Proposition~\ref{Int}.
\begin{corollary}\label{Maininter}
The following intertwining relation holds:
\begin{align}\label{intertwiningnormalised}
\overline{K}\,\Pi=\overline{Q}\,\overline{K},
\end{align}
where both sides are kernels from $\pos_d$ to $\pos_d^{2}$.
\end{corollary}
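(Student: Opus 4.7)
The plan is to derive the normalised identity directly from the unnormalised intertwining relation $K\Pi = QK$ of Proposition~\ref{Int}, by exploiting the fact that the $\varphi$ factor introduced in the denominator of $\overline{K}$ cancels exactly against the $\varphi(\tilde{s})$ factor in the numerator of $\overline{Q}$. There is no real analytic content to extract; only careful bookkeeping of the reference function $\varphi$ on the two sides.

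Concretely, I would first unfold the left-hand side: for a bounded measurable $f$ on $\pos_d^{2}$,
\begin{equation*}
\overline{K}\,\Pi\, f\,(s)
= \frac{1}{\varphi(s)}\, K\,\Pi\, f\,(s)
= \frac{1}{\varphi(s)}\, Q\,K\, f\,(s),
\end{equation*}
where in the second equality I use Proposition~\ref{Int}. Next I would unfold the right-hand side using the definitions~\eqref{eq:normalisedKernels}:
\begin{equation*}
\overline{Q}\,\overline{K}\, f\,(s)
= \int_{\pos_d} \overline{Q}(s;\diff \tilde{s})\, \overline{K}\, f(\tilde{s})
= \int_{\pos_d} \frac{\varphi(\tilde{s})}{\varphi(s)}\, Q(s;\diff \tilde{s})\cdot \frac{1}{\varphi(\tilde{s})}\, K\, f(\tilde{s})
= \frac{1}{\varphi(s)}\, Q\,K\, f\,(s),
\end{equation*}
so the two sides coincide. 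The only point that needs a brief justification is that the factor $1/\varphi(\tilde{s})$ may indeed be pulled inside the $Q(s;\diff \tilde{s})$-integral and combined with $\varphi(\tilde{s})$; this is legitimate since $\varphi$ is pointwise finite and strictly positive on $\pos_d$ (as recorded after~\eqref{functionphi}), and Corollary~\ref{eigenequa} guarantees that $Q\varphi = \varphi < \infty$, so all integrals are well-defined and Fubini applies.

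There is no genuine obstacle here; the statement is essentially a reformulation of Proposition~\ref{Int} after multiplying through by the appropriate powers of $\varphi$, and Corollary~\ref{eigenequa} is what ensures that the normalised kernel $\overline{Q}$ obtained in this way is an honest Markov kernel rather than merely a formal expression.
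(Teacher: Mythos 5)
Your proof is correct and follows exactly the route the paper has in mind: the paper states the corollary is an ``immediate consequence'' of the definitions~\eqref{eq:normalisedKernels} and Proposition~\ref{Int} without writing out the cancellation, and your computation is precisely that omitted bookkeeping, together with the appropriate remark that finiteness and positivity of $\varphi$ make the manipulation legitimate.
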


Let us now turn to the measure $\eta$ defined in~\eqref{kerinf2}, which, according to Theorem~\ref{R-P}, will be the initial distribution of the process $S$.
\begin{lemma}
\label{lemma:etaProbability}
The measure $\eta$ is a probability measure.
\end{lemma}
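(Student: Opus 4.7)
\emph{The plan.} I will verify directly that $\int_{\pos_d} \eta(\diff \tilde s) = 1$. First, replacing $a$ by $a^{-1}$ in \eqref{newkernelK}--\eqref{functionphi} (and using the inversion-invariance of $\mu$ recalled in Section~\ref{subsec:integration}), I would rewrite
\[
\varphi(\tilde s) = \int_{\pos_d} |b|^\beta |b + \tilde s|^{-\alpha} e^{-\tr[b]} \mu(\diff b) .
\]
Substituting into~\eqref{kerinf2} and applying Tonelli (the integrand is nonnegative), the total mass of $\eta$ becomes
\[
\frac{1}{B_d(\alpha,\beta)\, \Gamma_d(\beta)} \int_{\pos_d} |b|^\beta e^{-\tr[b]} \biggl( \int_{\pos_d} |\tilde s|^\alpha |b+\tilde s|^{-\alpha} e^{-\tr[\tilde s]} \mu(\diff \tilde s) \biggr) \mu(\diff b) .
\]

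For fixed $b$, I would evaluate the inner integral through the $\GL_d$-invariance of $\mu$ via the substitution $\tilde s = b^{1/2} u\, b^{1/2}$: the ratio $|\tilde s|^\alpha |b+\tilde s|^{-\alpha}$ collapses to $|u|^\alpha |I+u|^{-\alpha}$ (the factors of $|b|$ cancel), while $e^{-\tr[\tilde s]}$ becomes $e^{-\tr[bu]}$. A second application of Tonelli and a further change of variable $b = (I+u)^{-1/2} v (I+u)^{-1/2}$ reduce the $b$-integration to a Wishart normalisation, yielding
\[
\int_{\pos_d} |b|^\beta e^{-\tr[b(I+u)]} \mu(\diff b) = \Gamma_d(\beta)\, |I+u|^{-\beta},
\]
by~\eqref{eq:gammaFn}.

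What remains is the $u$-integral $\int_{\pos_d} |u|^\alpha |I+u|^{-(\alpha+\beta)} \mu(\diff u) = B_d(\alpha,\beta)$ from~\eqref{eq:betaFn}. Multiplying through, the Wishart and Beta constants cancel exactly against the prefactor $B_d(\alpha,\beta)^{-1} \Gamma_d(\beta)^{-1}$, giving total mass one.

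The computation is in essence the matrix analogue of the beta-gamma algebra, and the only mild technicality is tracking the Jacobian identities dictated by the $\GL_d$- and inversion-invariance of $\mu$; no real analytic obstacle arises. An alternative, softer route would apply Fatou's lemma to the pointwise limit $\overline{Q}(s; \cdot) \to \eta$ as $s\to\infty$ (the observation already noted in the text immediately before the lemma), but that only yields $\int \eta \le 1$ and would require a separate uniform-integrability or dominated-convergence argument to promote the inequality to equality; the direct calculation above avoids this issue entirely.
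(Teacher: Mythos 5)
Your computation is correct and follows essentially the same route as the paper's proof: both apply Tonelli and a sequence of $\GL_d$- and inversion-invariant changes of variable to reduce the total mass of $\eta$ to the product of the defining integrals of $\Gamma_d(\beta)$ and $B_d(\alpha,\beta)$, which then cancel against the normalisation in~\eqref{kerinf2}. The only cosmetic difference is in how the substitutions are bundled — the paper combines the inversion of $a$ and the conjugation by $\tilde s^{1/2}$ into the single change $x := \tilde s^{-1/2} a^{-1} \tilde s^{-1/2}$, whereas you perform these as two separate steps — but the underlying argument is identical.
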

\begin{proof}
Observe that
\begin{align*}
\int_{\pos_d}\mu(\diff \tilde{s})\,\varphi(\tilde{s})\,|\tilde{s}|^{\alpha}e^{-\tr[\tilde{s}]}
&=\int_{\pos_d}\mu(\diff \tilde{s})\,|\tilde{s}|^{\alpha}e^{-\tr[\tilde{s}]}\int_{\pos_d}\mu(\diff a)\,|a|^{\alpha-\beta}\left|I+\tilde{s}a\right|^{-\alpha}e^{-\tr[a^{-1}]}\\
&=\int_{\pos_d}\mu(\diff x)\,|x|^{\beta-\alpha}|I+x^{-1}|^{-\alpha}\int_{\pos_d}\mu(\diff \tilde{s})\,|\tilde{s}|^{\beta}e^{-\tr[\tilde{s}(I+x)]}\\
&=\int_{\pos_d}\mu(\diff x)\,|x|^{\beta}|I+x|^{-(\alpha+\beta)}\int_{\pos_{d}}\mu(\diff y)\,|y|^{\beta}e^{-\tr[y]}\\
&=B_d(\alpha,\beta) \Gamma_{d}(\beta).
\end{align*}
For the above equalities, we have used~\eqref{functionphi}, the changes of variables $x:=\tilde{s}^{-1/2}a^{-1}\tilde{s}^{-1/2}$ (noting that $\mu(\diff x)=\mu(\diff a)$) and $y:=(I+x)^{1/2}\tilde{s}(I+x)^{1/2}$ (noting that $\mu(\diff y)=\mu(\diff \tilde{s})$), and, finally, the definitions of gamma and beta functions~\eqref{eq:gammaFn} and~\eqref{eq:betaFn}.
\end{proof}

\begin{lemma}\label{initialintertwining}
We have
\begin{align}\label{intertwiningnormalised}
\lambda\,\Pi=\eta\,\overline{K},
\end{align}
where both sides are probability measures on $\pos_d^{2}$.
\end{lemma}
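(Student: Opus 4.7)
The plan is to establish $\lambda\Pi=\eta\,\overline K$ by direct computation, verifying that both measures produce the same density with respect to $\mu(\diff\tilde r)\mu(\diff\tilde a)$ on the domain $D=\{(\tilde r,\tilde a)\in\pos_d^2:\tilde a-\tilde r\in\pos_d\}$ when paired with an arbitrary bounded measurable $f\colon\pos_d^2\to\mathbb{R}$. That both sides have total mass one follows automatically from Lemma~\ref{lemma:etaProbability}, the fact that $\lambda$ is a probability, and that $\Pi$, $\overline K$ are Markov.

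For the left-hand side, the Dirac factor $\delta(a;\diff r)$ in $\lambda$ forces $r=a$, reducing $\lambda\Pi f$ to an integral in $(a,\tilde r)$ against $P(a;\diff\tilde r)\,\delta(a+\tilde r;\diff\tilde a)$. Using $|a|\,|I+a^{-1}\tilde r|=|a+\tilde r|$ to consolidate the determinant factors, and then performing the change of variables $\tilde a:=a+\tilde r$ with $\tilde r$ fixed via the translation identity $|a|^{(d+1)/2}\mu(\diff a)=|\tilde a|^{(d+1)/2}\mu(\diff\tilde a)$ recalled in the proof of Proposition~\ref{Int}, I obtain the explicit density
\begin{align*}
\frac{1}{\Gamma_d(\beta)\,B_d(\alpha,\beta)}\;|\tilde r|^{\alpha}\,|\tilde a|^{(d+1)/2-\alpha-\beta}\,|\tilde a-\tilde r|^{-(d+1)/2}\,e^{-\tr[(\tilde a-\tilde r)^{-1}]}.
\end{align*}

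For the right-hand side, the factor $\varphi(s)$ in $\eta$ cancels against $\varphi(s)^{-1}$ in $\overline K$, and the delta reduces the expression to an integral over $(s,\tilde a)\in\pos_d^2$. The key step is the change of variables $s\mapsto\tilde r:=\tilde a(s^{-1}+\tilde a)^{-1}\tilde a$ with $\tilde a$ fixed, a bijection onto $\{\tilde r:\tilde a-\tilde r\in\pos_d\}$ whose inverse map satisfies $s^{-1}+\tilde a=\tilde a\tilde r^{-1}\tilde a$. From this, the identities $|s|=|\tilde r|/(|\tilde a|\,|\tilde a-\tilde r|)$ and $|I+s\tilde a|=|\tilde a|/|\tilde a-\tilde r|$ follow directly, the latter from $s(s^{-1}+\tilde a)=I+s\tilde a$; the Jacobian $\mu(\diff s)=(|\tilde a|/|\tilde a-\tilde r|)^{(d+1)/2}\mu(\diff\tilde r)$ follows by composing three $\mu$-tractable steps (inversion $s\leftrightarrow s^{-1}$, translation $s^{-1}\mapsto s^{-1}+\tilde a$, and the $\GL_d$-action-plus-inversion $\tilde a\tilde r^{-1}\tilde a\leftrightarrow\tilde r$). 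Substituting these into the RHS, the $e^{\tr[\tilde a^{-1}]}$ piece from $e^{-\tr[s]}$ cancels the $e^{-\tr[\tilde a^{-1}]}$ from $k(s;\diff a)$, and the powers of $|\tilde a|,|\tilde r|,|\tilde a-\tilde r|$ collapse to reproduce exactly the LHS density.

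The main obstacle is the trace identity $\tr[s]=\tr[(\tilde a-\tilde r)^{-1}]-\tr[\tilde a^{-1}]$, which underlies the above exponential cancellation but is not transparent from the asymmetric expression $s^{-1}=\tilde a\tilde r^{-1}\tilde a-\tilde a$. I would resolve this by introducing $b:=\tilde a^{-1/2}(\tilde a-\tilde r)\tilde a^{-1/2}\in\pos_d$, noting that $I-b=\tilde a^{-1/2}\tilde r\tilde a^{-1/2}$ so $b$ and $I-b$ commute, and deducing the symmetric representation $s=\tilde a^{-1/2}(b^{-1}-I)\tilde a^{-1/2}$. Cyclicity of trace then gives $\tr[s]=\tr[b^{-1}\tilde a^{-1}]-\tr[\tilde a^{-1}]=\tr[(\tilde a-\tilde r)^{-1}]-\tr[\tilde a^{-1}]$, as required.
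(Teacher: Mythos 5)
Your proof is correct and follows exactly the route the paper indicates — a direct computation of both densities, with the same change-of-variables $\tilde r := \tilde a(s^{-1}+\tilde a)^{-1}\tilde a$ and the same translation and $\GL_d$-invariance Jacobian identities used in the proof of Proposition~\ref{Int}. You have simply filled in the details (including the conjugation trick via $b = \tilde a^{-1/2}(\tilde a - \tilde r)\tilde a^{-1/2}$ to symmetrise $s$ and verify the trace identity) that the paper leaves to the reader.
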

\begin{proof}
This is checked directly by computing both sides of~\eqref{intertwiningnormalised}, similarly to the proof of Proposition~\ref{Int}.
\end{proof}

Finally, we are ready to prove Theorem~\ref{R-P}.

\begin{proof}[Proof of Theorem~\ref{R-P}]
Recall from~\eqref{eq:MarkovFunction} that $\phi(r,a):=(a-r)^{-1}ra^{-1}$ on the domain $D=\{(r,a)\in\pos_d^2 \colon a-r\in\pos_d\}$, so that $S(n)=\phi(R(n),A(n))$ for $n\geq 1$.

We would like to apply Theorem~\ref{MarkovFunctionsT}.
Notice first that we may view $\overline{K}$ as a Markov kernel from $\pos_d$ to $D$ and $\Pi$ as a Markov kernel from $D$ to $D$.
We may then view the intertwining relation $\overline{K}\,\Pi=\overline{Q}\,\overline{K}$ of Corollary~\ref{Maininter} as an equality of kernels from $\pos_d$ to $D$.
Notice now that
\begin{align*}
\phi^{-1}\{s\}&=\{(r,a)\in D: r=a(s^{-1}+a)^{-1}a\}\, , \quad s\in\pos_d\,.
\end{align*}
It is then clear from the definitions (see~\eqref{eq:normalisedKernels} and~\eqref{kernelK}) that the probability measure $\overline{K}(s;\cdot)$ is supported on $\phi^{-1}\{s\}$, namely $\overline{K}(s;\phi^{-1}\{s\})=1$, for all $s\in\pos_{d}$.

Since $(R(0),A(0))$ has distribution $\lambda$ and the transition kernel of $((R(n),A(n)), n\geq 1)$ is $\Pi$, the distribution of $(R(1),A(1))$ is $\lambda\Pi$, which equals $\eta\overline{K}$ by Lemma~\ref{initialintertwining}.
Theorem~\ref{MarkovFunctionsT} then implies that  $(\phi(R(n),A(n)), n\ge1)$ is a $\pos_d$-valued time-homogeneous Markov process, in its own filtration, with initial distribution $\eta$ and transition kernel $\overline{Q}$; moreover, \eqref{eq:R-P_conditional} holds.
\end{proof}

%

\subsection{Dufresne type identity}
\label{subsec:Dufresne}

Here we prove a discrete, matrix analogue of the classical Dufresne identity: under the additional assumption that $\beta-\alpha>\frac{d-1}{2}$, the running sum $A(n)$ in~\eqref{runningsum} converges a.s.\ as $n\to\infty$ and the limit has an inverse Wishart distribution.
We deduce this from Theorem~\ref{R-P}, using also a result on Lyapunov exponents of random walks in $\pos_d$ with Beta type II distributed increments from Appendix~\ref{app:lyapunovExponents}.

\begin{proposition}\label{convergenceA}
If $\beta-\alpha>\frac{d-1}{2}$, the series
\begin{align}\label{Ainfty}
A_{\infty}
:=\lim_{n\to\infty}A(n)
=\sum_{k=0}^{\infty}R(k)
\end{align}
converges a.s.\ in $\pos_d$.
\end{proposition}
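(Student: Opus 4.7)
The plan is to show that $\tr[R(n)]$ decays exponentially fast almost surely, which is sufficient to conclude. Since each $R(k)\in\pos_d$, the bound $|R(k)_{ij}|\le\tr[R(k)]$ forces entrywise absolute convergence of $A(n)=\sum_{k=0}^n R(k)$ to a symmetric limit $A_\infty$, and the Loewner inequality $A_\infty\ge R(0)\in\pos_d$ then guarantees $A_\infty\in\pos_d$.

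To obtain this decay, I would use the alternative construction $\RWmod$ of Proposition~\ref{prop:invRW} with $w$ the Cholesky function. This gives
\begin{align*}
R(n)=\RWmod(n)=L_n^\top L_n, \qquad L_n:=w(X(n))w(X(n-1))\cdots w(X(1))w(R(0)),
\end{align*}
where $L_n$ is upper triangular as a product of upper triangular matrices. Then $\tr[R(n)]=\|L_n\|_F^2\le d\,\|L_n\|^2$, so it suffices to control the operator norm $\|L_n\|$. Because the $w(X(i))$ are i.i.d.\ and upper triangular, the diagonal of $L_n$ factorises entrywise as the scalar products
\begin{align*}
(L_n)_{jj}=w(R(0))_{jj}\prod_{i=1}^n w(X(i))_{jj},\qquad j=1,\dots,d,
\end{align*}
so by the strong law of large numbers $\tfrac{1}{n}\log(L_n)_{jj}\to \ell_j:=\mathbb{E}[\log w(X(1))_{jj}]$ almost surely. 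The standard analysis of products of i.i.d.\ upper triangular random matrices, which is precisely the content of Appendix~\ref{app:lyapunovExponents}, shows that these scalars $\ell_1,\dots,\ell_d$ are in fact the full Lyapunov spectrum of $L_n$, so that $\tfrac{1}{n}\log\|L_n\|\to\max_j\ell_j$ almost surely.

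The crucial remaining step, and the only genuine obstacle, is to prove that every $\ell_j$ is strictly negative under the assumption $\beta-\alpha>\tfrac{d-1}{2}$. This is exactly what the Cholesky computation in Appendix~\ref{app:lyapunovExponents} delivers for the Beta type II distribution: via the explicit change of variables from $X\sim\BetaII_d(\alpha,\beta)$ to its Cholesky factor, combined with the beta--gamma identities of Appendix~\ref{app:WishartBeta}, one obtains each $\ell_j$ in closed form as a difference of digamma values at $\alpha$- and $\beta$-type parameters shifted by the half-integers $(j-1)/2$. Strict monotonicity of the digamma function $\psi$, together with the fact that $\beta-\alpha>\tfrac{d-1}{2}\ge\tfrac{j-1}{2}$ implies $\beta-\tfrac{j-1}{2}>\alpha-\tfrac{j-1}{2}>0$, then yields $\ell_j<0$ for every $j$. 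Hence $\max_j\ell_j<0$, giving the desired exponential decay of $\|L_n\|$ (and therefore of $\tr[R(n)]$), which concludes the proof.
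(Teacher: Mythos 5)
Your argument follows essentially the same route as the paper's: the paper's Proposition~\ref{convergenceA} directly invokes Corollary~\ref{Lyapunov} to get the top Lyapunov exponent $\gamma=\lim_n\tfrac1n\log\lambda_{\max}(R(n))=\psi(\alpha)-\psi\bigl(\beta-\tfrac{d-1}{2}\bigr)$, argues $\gamma<0$ under the hypothesis, concludes exponential decay of $\lambda_{\max}(R(n))$ and hence absolute convergence of the series, and finishes with $\lambda_{\min}(A_\infty)\ge\lambda_{\min}(A(0))>0$. Your steps via $\tr[R(n)]\le d\,\lambda_{\max}(R(n))=d\,\|L_n\|^2$ and the Loewner bound $A_\infty\succeq R(0)$ are equivalent cosmetic variants of the same chain. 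What you spell out in more detail --- passing to the Cholesky realisation $\RWmod$, using that the diagonal of an upper-triangular product factorises and the SLLN for $\log U_{jj}$, and then appealing to the identification of the resulting scalars with the Lyapunov spectrum --- is precisely the content of Proposition~\ref{prop:lyapunov} and Corollary~\ref{Lyapunov}, so you are re-deriving rather than merely citing. One small inaccuracy worth fixing: the $\beta$-parameter shift is not $(j-1)/2$ but $(d-j)/2$; Corollary~\ref{Lyapunov} gives $\mu_j(R)=\psi\bigl(\alpha-\tfrac{j-1}{2}\bigr)-\psi\bigl(\beta-\tfrac{d-j}{2}\bigr)$, so the sign condition for $\ell_j<0$ is $\beta-\alpha>\tfrac{d-2j+1}{2}$, which is most restrictive at $j=1$ and there reduces exactly to the hypothesis $\beta-\alpha>\tfrac{d-1}{2}$. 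In particular you only need the top exponent $\ell_1=\gamma/2$ to be negative, which is what the paper isolates; the inequality chain $\beta-\tfrac{j-1}{2}>\alpha-\tfrac{j-1}{2}$ as you wrote it does not correspond to the correct formula, although the conclusion $\ell_j<0$ for all $j$ does hold.
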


\begin{proof}
By Corollary~\ref{Lyapunov}, we have
\begin{align*}
\gamma:=\lim_{n\to\infty}\frac{1}{n}\log \lambda_{\max}(R(n))=\psi\left(\alpha\right)-\psi\left(\beta-\frac{d-1}{2}\right),\quad a.s.,
\end{align*}
where $\psi$ is the digamma function defined in~\eqref{digamma}.
Since $\psi$ is strictly increasing on $(0,\infty)$ and, by assumption, $\beta-\alpha>\frac{d-1}{2}$, we have $\gamma<0$ almost surely.
As a consequence, $\lambda_{\max}(R(n))$ vanishes exponentially fast, as $n\to\infty$, a.s., hence the series $\sum_{k=0}^{\infty} \lambda_{\max}(R(k))$ converges a.s.
The space of $d\times d$ real symmetric matrices with the norm $\lVert \cdot \rVert:= \lvert \lambda_{\max}(\cdot) \rvert$ is complete; therefore, the series $A_{\infty}$ defined in~\eqref{Ainfty} converges absolutely a.s.\ in such a space (with respect to $\lVert \cdot \rVert$ or any other norm).
That $A_{\infty}$ actually takes values in $\pos_d$ follows from the fact that, by~\eqref{lambdasum}, $\lambda_{\min}(A_{\infty}) \geq \lambda_{\min}(A(0)) >0$, as $A(0)$ takes values in $\pos_d$.
\end{proof}

\begin{theorem}\label{Dufresne}
If $\beta-\alpha>\frac{d-1}{2}$, then $A_{\infty}$ has the inverse Wishart distribution $\invWish_{d}(\beta-\alpha)$.
\end{theorem}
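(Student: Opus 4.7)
The plan is to deduce the distribution of $A_\infty$ from Theorem~\ref{R-P} by letting $n\to\infty$ and passing to the limit in the conditional distribution of $A(n)$ given $S(n)$, which is described explicitly by (the second marginal of) $\overline{K}$. First I would note that, by Proposition~\ref{convergenceA} and the absolute convergence of $\sum_{k} R(k)$, one has $A(n)\to A_\infty\in\pos_d$ and $R(n)\to 0$ a.s., whence
\[
S(n) = A(n-1)^{-1} R(n) A(n)^{-1} \longrightarrow A_\infty^{-1} \cdot 0 \cdot A_\infty^{-1} = 0 \qquad\text{a.s.}
\]

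Next, applying Theorem~\ref{R-P} to a bounded continuous test function $f\colon\pos_d\to\mathbb{R}$ depending only on the $A$-coordinate yields
\[
\mathbb{E}[f(A(n))] = \mathbb{E}\bigl[(\overline{K}_2 f)(S(n))\bigr],
\]
where
\[
\overline{K}_2(s;\diff a) := \varphi(s)^{-1}\,|a|^{\alpha-\beta}|I+sa|^{-\alpha}e^{-\tr[a^{-1}]}\mu(\diff a)
\]
is the second marginal of $\overline{K}(s;\cdot)$. The key pointwise limit I would establish is
\[
\overline{K}_2 f(s) \xrightarrow{s\to 0} \frac{1}{\Gamma_d(\beta-\alpha)}\int_{\pos_d} f(a)\,|a|^{-(\beta-\alpha)}e^{-\tr[a^{-1}]}\mu(\diff a),
\]
which is precisely $\int f\,\diff\invWish_d(\beta-\alpha)$ in view of~\eqref{inverseWish}. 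Under the standing hypotheses, $\alpha>\tfrac{d-1}{2}>0$ gives $|I+sa|^{-\alpha}\le 1$, while $\beta-\alpha>\tfrac{d-1}{2}$ makes $|a|^{\alpha-\beta}e^{-\tr[a^{-1}]}$ integrable against $\mu$; dominated convergence applied separately to the numerator and to $\varphi(s)$ then delivers the claimed limit, with $\varphi(s)\to\Gamma_d(\beta-\alpha)$.

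To conclude, I would let $n\to\infty$ on both sides of the displayed identity. Because $|\overline{K}_2 f|\le\|f\|_\infty$ everywhere and $S(n)\to 0$ a.s., dominated convergence combined with the pointwise limit above yields $\mathbb{E}[(\overline{K}_2 f)(S(n))]\to\int f\,\diff\invWish_d(\beta-\alpha)$, while the a.s.\ convergence $A(n)\to A_\infty$ and the bounded continuity of $f$ give $\mathbb{E}[f(A(n))]\to\mathbb{E}[f(A_\infty)]$. Matching these limits for all bounded continuous $f$ identifies the law of $A_\infty$ as $\invWish_d(\beta-\alpha)$. The main obstacle will be justifying the exchange of limit and expectation inside $\overline{K}_2 f(s)$ as $s\to 0$; the two domination bounds above handle this cleanly, making the rest of the argument routine.
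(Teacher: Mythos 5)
Your proposal is correct and follows essentially the same route as the paper's proof: both pass to the limit in $\mathbb{E}[f(A(n))] = \mathbb{E}[(\overline{K}_2 f)(S(n))]$, using $S(n)\to 0$ a.s.\ and the weak convergence of the $A$-marginal of $\overline{K}(s;\cdot)$ (the paper calls it $K^{\dag}(s;\cdot)$) to $\invWish_d(\beta-\alpha)$ as $s\to 0$. The only cosmetic difference is that you deduce $S(n)\to 0$ directly from the a.s.\ convergence $R(n)\to 0$, $A(n)\to A_\infty$ and continuity of inversion, while the paper argues via an explicit eigenvalue bound $\lambda_{\max}(S(n))\le \lambda_{\min}(A(0))^{-2}\lambda_{\max}(R(n))$; both work, and your justification of the $s\to 0$ limit via the dominations $|I+sa|^{-\alpha}\le 1$ and integrability of $|a|^{\alpha-\beta}e^{-\tr[a^{-1}]}$ matches the paper's (implicit) dominated-convergence step.
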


\begin{proof}
Using~\eqref{functional} and~\eqref{lambdasum}-\eqref{lambdaproduct}, we have, a.s.,
\begin{align*}
\begin{split}
\lambda_{\max}(S(n))
&\leq \lambda_{\max}\left(A(n-1)^{-1}\right)
\lambda_{\max}(R(n))
\lambda_{\max}\left(A(n)^{-1}\right) \\
&= \lambda_{\min}(A(n-1))^{-1}
\lambda_{\max}(R(n))
\lambda_{\min}(A(n))^{-1} \\
&\leq \lambda_{\min}(A(0))^{-2} 
\lambda_{\max}(R(n)) .
\end{split}
\end{align*}
Since $A(0)$ is a random matrix in $\pos_d$, $\lambda_{\min}(A(0))^{-2}$ is an a.s.\ real positive random variable.
On the other hand, we showed in the proof of Proposition~\ref{convergenceA} that $\lambda_{\max}(R(n))\to0$ as $n\to\infty$ a.s., hence $\lambda_{\max}(S(n))\to0$ a.s., i.e.\ $S(n)\to0$ a.s.

Let $\nu_{n}$ be the distribution of $S(n)$, for $n\ge1$, so that, by the above argument, $\nu_{n}$ converges weakly to the Dirac measure at $0$.
Let $g$ be any bounded and continuous function on $\pos_{d}$.
By averaging~\eqref{eq:R-P_conditional} with $f(x,y):=g(y)$, we obtain
\begin{align*}
\mathbb{E}\left[g(A(n))\right]&=\int_{\pos_d}\nu_{n}(\diff s)\,\overline{K}\,f (s)=\int_{\pos_d}\nu_{n}(\diff s)\,K^{\dag}g (s),
\end{align*}
where
\begin{align*}
K^{\dag}(s; \diff a):=\frac{1}{\varphi(s)}k(s;\diff a)=\frac{1}{\varphi(s)}\,|a|^{\alpha-\beta}\left|I+sa\right|^{-\alpha}e^{-\tr[a^{-1}]}\mu(\diff a),
\end{align*}
and $\varphi(s)$ is the normalisation given in~\eqref{functionphi}.
The function $s\mapsto K^{\dag}g(s)$ is bounded on $\pos_{d}$.
Moreover, note that $K^{\dag}(s; \diff a)$ converges weakly, as $s\to0$, to the inverse Wishart distribution with parameter $\beta-\alpha$:
\begin{align}\label{Kzero}
K^{\dag}(0; \diff a):=\Gamma_{d}(\beta-\alpha)^{-1}|a|^{-(\beta-\alpha)}e^{-\tr[a^{-1}]}\mu(\diff a).
\end{align}
It follows that $s\mapsto K^{\dag}g(s)$ is continuous at $s=0$.
Since $\nu_{n}$ converges weakly to the Dirac measure at zero, we have $\mathbb{E}\left[g(A(n))\right]\to K^{\dag}g(0)$, as $n\to\infty$, as desired.
\end{proof}

\section{Stochastic matrix equations}
\label{sec:stochasticEqns}

In this section, we give a completely different proof of the Dufresne type identity of Section~\ref{subsec:Dufresne}, linking it to the solution to certain stochastic matrix equations.

Recall that $T^w$ is the symmetrised multiplication operation~\eqref{eq:mult}.
Throughout this section, $w$ will be an arbitrary function satisfying~\eqref{eq:splitFn}; thus, for the sake of notational simplicity, we will drop the superscript $w$ in $T^w$.
We will also denote by $\mathscr{L}(X)$ the distribution of a random matrix $X$ in $\pos_d$.

\subsection{Matrix Kesten recursions}
\label{subsec:stochasticRecursions}

Let $x\in\pos_d$ and consider a family $(X(n),n\ge1)$ of i.i.d.\ $\ort_d$-invariant random matrices in $\pos_d$.
Define a Markov process $\xi=(\xi(n), n\ge0)$ in $\pos_d$ by the recursion
\begin{align}
\label{srec}
\xi(0):=x, \qquad\qquad
\xi(n):=T_{X(n)}(I+\xi(n-1)) \qquad \text{for } n\ge1.
\end{align}
When $d=1$, this process is sometimes referred to as \emph{Kesten recursion} and has been widely considered in the mathematical literature (see e.g.~\cite{Kes73, Ver, CL, Gol, D, CL2, DF}).
Note that the distribution $\mathscr{L}(Z)$ of a random matrix $Z$ in $\pos_d$ is stationary for the Markov process $\xi$ if and only if
\begin{align}\label{stoequationgeneral}
Z\stackrel{\mathrm{d}}{=}T_{X}(I+Z),
\end{align}
where $X\stackrel{\mathrm{d}}{=}X(1)$ is independent of $Z$.
Define now $F_n(z) := T_{X(n)}(I+z)$ for any $z\in\pos_d$ and $n\geq 1$.
By iterating~\eqref{srec}, one may write
\begin{equation}
\label{equalindistribution}
\xi(n)
= F_n \circ \cdots \circ F_1(x)
\stackrel{\mathrm{d}}{=}
F_1 \circ \cdots \circ F_n(x),
\end{equation}
for all $n\geq 1$, where the distributional equality is due to the fact that $(X(n), n\geq1)$ are i.i.d.

As an application of a criterion appeared in~\cite{Letac, CL}, we obtain the following.

\begin{proposition}\label{finiteZ}
Assume that the series
\begin{align}\label{seriesZ}
Z_{\infty}:=\sum_{k=1}^{\infty}T_{X(1)}\circ T_{X(2)}\circ\cdots \circ T_{X(k)}(I)
\end{align}
converges a.s.
Then, $\mathscr{L}(Z_{\infty})$ is the unique stationary distribution for the Markov process $\xi$, i.e.\ $\mathscr{L}(Z_{\infty})$ is the unique solution to~\eqref{stoequationgeneral}.
Furthermore, $\mathscr{L}(Z_{\infty})$ is $\ort_d$-invariant.
\end{proposition}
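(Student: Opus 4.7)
The plan is to apply the backward iteration method underlying the Letac--Chamayou criterion (\cite{Letac, CL}). Setting $F_k(z) := T_{X(k)}(I+z)$, the forward iteration $\xi(n) = F_n \circ \cdots \circ F_1(x)$ has, by~\eqref{equalindistribution}, the same law as the backward iteration $G_n(x) := F_1 \circ \cdots \circ F_n(x)$. The first step is to exploit the fact that each map $T_i(\cdot) = w(X(i))^\top (\cdot)\, w(X(i))$ is \emph{linear} in its argument, in order to prove by induction on $n$ the telescoping identity
\[
G_n(x) = \sum_{k=1}^{n} T_{X(1)} \circ T_{X(2)} \circ \cdots \circ T_{X(k)}(I) \; + \; T_{X(1)} \circ T_{X(2)} \circ \cdots \circ T_{X(n)}(x).
\]
Under the hypothesis of the proposition, the first summand converges a.s.\ to $Z_\infty$. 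For the remainder, I would use the Loewner bound $x \leq \lambda_{\max}(x)\, I$ to get $T_{X(1)} \circ \cdots \circ T_{X(n)}(x) \leq \lambda_{\max}(x)\, T_{X(1)} \circ \cdots \circ T_{X(n)}(I)$ in $\pos_d$, and observe that the right-hand side is the general term of an a.s.\ convergent series of positive matrices, so it must vanish a.s. Therefore $G_n(x) \to Z_\infty$ a.s., and in particular $\xi(n) \stackrel{\mathrm{d}}{\to} Z_\infty$.

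Next, to verify that $\mathscr{L}(Z_\infty)$ solves the fixed-point equation~\eqref{stoequationgeneral}, I would split $G_{n+1}(x) = F_1(H_n(x)) = T_{X(1)}\big(I + H_n(x)\big)$, where $H_n(x) := F_2 \circ \cdots \circ F_{n+1}(x)$ is independent of $X(1)$ and equidistributed with $G_n(x)$. Passing to the limit in distribution then gives $Z_\infty \stackrel{\mathrm{d}}{=} T_{X(1)}(I + Z')$ with $Z'$ independent of $X(1)$ and $Z' \stackrel{\mathrm{d}}{=} Z_\infty$, which is precisely~\eqref{stoequationgeneral}. For uniqueness, the same Loewner estimate applied after conditioning on an arbitrary (possibly random) initial value independent of the $X(k)$ shows that $\xi(n)$ converges in distribution to $Z_\infty$ regardless of its starting law, so any stationary distribution must coincide with $\mathscr{L}(Z_\infty)$.

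Finally, for the $\ort_d$-invariance I would invoke Proposition~\ref{prop:invRW}, which ensures that the law of each partial sum in~\eqref{seriesZ} is independent of the choice of $w$, and accordingly take $w(x) = x^{1/2}$. For any $k \in \ort_d$ one has $k^\top x^{1/2} k = (k^\top x k)^{1/2}$, hence $k^\top T_{X(i)}(y) k = T_{\widetilde X(i)}(k^\top y k)$ with $\widetilde X(i) := k^\top X(i) k$. Iterating and using $k^\top I k = I$, one obtains $k^\top T_{X(1)} \circ \cdots \circ T_{X(n)}(I) k = T_{\widetilde X(1)} \circ \cdots \circ T_{\widetilde X(n)}(I)$, and since $(\widetilde X(i))_{i\ge 1} \stackrel{\mathrm{d}}{=} (X(i))_{i\ge 1}$ by the $\ort_d$-invariance of the increments, summing in $n$ yields $k^\top Z_\infty k \stackrel{\mathrm{d}}{=} Z_\infty$. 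The points requiring care are the telescoping identity and the Loewner comparison, both of which crucially hinge on the linearity of the maps $T_i$; they are short computations and I do not anticipate any deeper obstacle.
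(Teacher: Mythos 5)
Your proof is correct and follows essentially the same route as the paper: both rest on the telescoping identity for the backward iteration $F_1\circ\cdots\circ F_n$, control the remainder via the bound $T_{X(1)}\circ\cdots\circ T_{X(n)}(x)\preceq\lambda_{\max}(x)\,T_{X(1)}\circ\cdots\circ T_{X(n)}(I)$, and then invoke the Letac--Chamayou criterion. The only cosmetic differences are that you re-derive the fixed-point equation and the $\ort_d$-invariance by hand (the latter via the identity $k^{\top}x^{1/2}k=(k^{\top}xk)^{1/2}$), whereas the paper delegates the former to \cite[Proposition 1]{CL} and obtains the latter from Proposition~\ref{prop:T}-\eqref{it:T_orthInv} applied inductively to $\xi$ started at $x=0$.
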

\begin{proof}
By~\cite[Proposition 1]{CL}, if $F_1 \circ \cdots \circ F_n(x)$ converges a.s.\ to a limit $Z$ that does not depend on $x$, then $\xi(n)
= F_n \circ \cdots \circ F_1(x)$ has a unique stationary distribution given by $\mathscr{L}(Z)$.
To prove the first claim, it then suffices to show that $F_1 \circ \cdots \circ F_n(x)$ converges a.s.\ to $Z_{\infty}$.
Notice that
\begin{equation}
\label{eq:inverseComposition}
F_1 \circ \cdots \circ F_n(x)
=T_{X(1)}\circ T_{X(2)}\circ\cdots \circ T_{X(n)}(x)+\sum_{k=1}^{n}T_{X(1)}\circ T_{X(2)}\circ\cdots \circ T_{X(k)}(I) .
\end{equation}
By hypothesis, the sum in~\eqref{eq:inverseComposition} converges a.s.\ as $n\to\infty$, hence $T_{X(1)}\circ T_{X(2)}\circ\cdots \circ T_{X(n)}(I)$ converges to the zero matrix a.s.
By~\eqref{lambdasum} and the fact that the matrices $ab$ and $ba$ have the same eigenvalues (for $a,b\in\GL_d$), we have
\[
\lambda_{\max}(T_{X(1)}\circ T_{X(2)}\circ\cdots \circ T_{X(n)}(x))
\leq \lambda_{\max}(x) \, \lambda_{\max}(T_{X(1)}\circ T_{X(2)}\circ\cdots \circ T_{X(n)}(I)) .
\]
The right-hand side of the latter inequality vanishes a.s., hence $T_{X(1)}\circ T_{X(2)}\circ\cdots \circ T_{X(n)}(x)$ converges to the zero matrix a.s., for all $x$.
We conclude that~\eqref{eq:inverseComposition} converges a.s.\ to $Z_{\infty}$, as desired.

It remains to check that $\mathscr{L}(Z_{\infty})$ is $\ort_d$-invariant.
Notice first that, by~\eqref{equalindistribution}, \eqref{seriesZ} and~\eqref{eq:inverseComposition}, $\mathscr{L}(Z_{\infty})$ is the $n\to\infty$ limit in distribution of $\xi(n)$, when the initial state of $\xi$ is taken to be $x=0$.
On the other hand, when $x=0$, it can be checked inductively, using Proposition~\ref{prop:T}-\eqref{it:T_orthInv}, that $\xi(n)$ is $\ort_d$-invariant for all $n\geq 1$.
Therefore, $\mathscr{L}(Z_{\infty})$ is the weak limit of a sequence of $\ort_d$-invariant distributions.
\end{proof}

Instead of~\eqref{srec}, consider now the recursion $\xi'=(\xi'(n), n\ge0)$ defined by
\begin{align}
\label{srecGL}
\xi'(0):=x, \qquad\qquad
\xi'(n):=T_{I+\xi'(n-1)}(X(n)) \qquad \text{for } n\geq 1.
\end{align}
This process was studied, under various aspects, in~\cite{GBLD} and called \emph{matrix Kesten recursion} in that article.

\begin{corollary}\label{coro:invariantBothProcessesGen}
Under the assumption that the series $Z_{\infty}$ defined in~\eqref{seriesZ} converges a.s., $\mathscr{L}(Z_{\infty})$ is the unique $\ort_d$-invariant stationary distribution for both processes $\xi$ and $\xi'$.
\end{corollary}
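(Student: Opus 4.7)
The plan is to reduce the statement for $\xi'$ to the result for $\xi$ established in Proposition~\ref{finiteZ}, via a symmetry lemma asserting that when both slots of the symmetrised product $T$ are filled by independent $\ort_d$-invariant random matrices, $T$ is symmetric in law.

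\emph{Step 1 (symmetry lemma).} I first establish: if $U,V\in\pos_d$ are independent and both $\ort_d$-invariant, then $T_U(V)\stackrel{\mathrm{d}}{=}T_V(U)$. Using the polar decomposition $w(U)=KU^{1/2}$ with $K$ orthogonal and measurable in $U$ (which exists since $w(U)$ is invertible and $w(U)^{\top}w(U)=U$), I rewrite $T_U(V)=U^{1/2}K^{\top}V K U^{1/2}$; conditioning on $U$ and using the $\ort_d$-invariance of $V$ (which is independent of $K$) gives $T_U(V)\stackrel{\mathrm{d}}{=}U^{1/2}V U^{1/2}$. Analogously $T_V(U)\stackrel{\mathrm{d}}{=}V^{1/2}U V^{1/2}$. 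Both have the same spectrum as $UV$, and both are $\ort_d$-invariant (using $(k^{\top}Uk)^{1/2}=k^{\top}U^{1/2}k$ for $k\in\ort_d$ together with the independent $\ort_d$-invariance of $U$ and $V$). Since an $\ort_d$-invariant distribution on $\pos_d$ is uniquely determined by its eigenvalue distribution, the two are equal in law.

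\emph{Step 2 (stationarity).} Let $Z\sim\mathscr{L}(Z_{\infty})$, which by Proposition~\ref{finiteZ} is $\ort_d$-invariant and stationary for $\xi$, and let $X\stackrel{\mathrm{d}}{=}X(1)$ be independent of $Z$. Since $I+Z$ is $\ort_d$-invariant and independent of the $\ort_d$-invariant $X$, the symmetry lemma yields
\[
T_X(I+Z)\stackrel{\mathrm{d}}{=}T_{I+Z}(X).
\]
The left-hand side has law $\mathscr{L}(Z_{\infty})$ by stationarity for $\xi$, while the right-hand side is one step of $\xi'$ starting from $Z$. Hence $\mathscr{L}(Z_{\infty})$ is stationary for $\xi'$. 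Conversely, let $Z'$ have any $\ort_d$-invariant stationary law for $\xi'$, so $Z'\stackrel{\mathrm{d}}{=}T_{I+Z'}(X)$ with $X\stackrel{\mathrm{d}}{=}X(1)$ independent of $Z'$; the same symmetry gives $Z'\stackrel{\mathrm{d}}{=}T_X(I+Z')$, i.e.\ $Z'$ is stationary for $\xi$, and by the uniqueness in Proposition~\ref{finiteZ} we conclude $\mathscr{L}(Z')=\mathscr{L}(Z_{\infty})$.

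The only real content is the symmetry lemma of Step~1; the corollary itself is a one-line reshuffling once it is in hand. The delicate point to flag is that the lemma genuinely requires \emph{both} $U$ and $V$ to be $\ort_d$-invariant, which is precisely why the uniqueness claim in the corollary is restricted to $\ort_d$-invariant stationary distributions for $\xi'$, whereas Proposition~\ref{finiteZ} gave uniqueness for $\xi$ without any such restriction.
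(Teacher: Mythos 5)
Your proof follows the paper's overall strategy: establish that $T_U(V)\stackrel{\mathrm{d}}{=}T_V(U)$ for independent $\ort_d$-invariant $U,V\in\pos_d$, deduce that the $\ort_d$-invariant stationary laws for $\xi$ and $\xi'$ coincide, and then invoke Proposition~\ref{finiteZ}. The genuine difference lies in how the symmetry lemma is justified. The paper cites Lemma~\ref{convolution}, whose commutativity claim rests on identifying $\odot$ with convolution of $\ort_d$-bi-invariant measures with respect to Haar measure on $\GL_d$, i.e.\ on the Gelfand-pair structure of the symmetric space $\pos_d\cong\GL_d/\ort_d$. You instead give a self-contained, more elementary argument: after reducing to the square-root choice of $w$ via a polar decomposition (which reproves Proposition~\ref{prop:T}-\eqref{it:T_prodRuleDoesNotMatter}), you note that $U^{1/2}VU^{1/2}$ and $V^{1/2}UV^{1/2}$ have \emph{pointwise} the same spectrum and are both $\ort_d$-invariant, and conclude equality in law because an $\ort_d$-invariant distribution on $\pos_d$ is determined by the law of its ordered eigenvalues (a fact worth stating explicitly, since it is the crux of your argument; it follows by averaging any test function over $\ort_d$). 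This avoids appealing to convolution on symmetric spaces and is a perfectly valid alternative. Your closing remark --- that the symmetry lemma needs \emph{both} arguments $\ort_d$-invariant, which is why uniqueness for $\xi'$ is asserted only within the class of $\ort_d$-invariant stationary laws --- is exactly the point the paper's formulation is designed to accommodate.
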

\begin{proof}
Notice that $T_Y(X) \stackrel{\mathrm{d}}{=} T_X(Y)$ if $X$ and $Y$ are both $\ort_d$-invariant (cfr.\ Lemma~\ref{convolution}).
Therefore, the $\ort_d$-invariant stationary distributions for $\xi$ and $\xi'$ coincide.
The claim then follows from Proposition~\ref{finiteZ}.
\end{proof}

\subsection{Alternative proof of the Dufresne type identity}

We now give an alternative proof of Theorem~\ref{Dufresne}, by explicitly solving a stochastic matrix equation of type~\eqref{stoequationgeneral}.
To do so, we use various classical properties that relate Wishart and matrix Beta distributions, all collected in Appendix~\ref{app:WishartBeta} for the reader's convenience.

For this alternative proof, it will be convenient to work with a $\GL_d$-invariant random walk $R$ realised by the second construction~\eqref{eq:RWmod} given in Proposition~\ref{prop:invRW}:
\[
R(n)
:= T_{R(0)}\circ T_{X(1)}\circ\dots\circ T_{X(n)}(I), \qquad n\ge0 .
\]
As in Section~\ref{subsec:Dufresne}, we fix $\alpha>\frac{d-1}{2}$ and $\beta-\alpha>\frac{d-1}{2}$, and take $R(0), X(1), X(2),\dots$ to be independent random matrices such that
\begin{itemize}
	\item $R(0)$ has the inverse Wishart distribution $\invWish_{d}(\beta)$;
	\item $X(n)$ has the Beta type II distribution $\BetaII_{d}(\alpha,\beta)$, for all $n\ge1$.
\end{itemize}

By Proposition~\ref{convergenceA}, the series
\begin{align}\label{Ainfty2}
A_\infty:=\sum_{k=0}^{\infty}T_{R(0)}\circ T_{X(1)}\circ\dots\circ T_{X(k)}(I)
\end{align}
converges almost surely.
Note that we may write
\begin{align}\label{Ainfty3}
A_{\infty}=T_{R(0)}(I+Z_{\infty}),
\end{align}
where
\begin{align}\label{Zinfty}
Z_{\infty}:=\sum_{k=1}^{\infty}T_{X(1)}\circ T_{X(2)}\circ\dots\circ T_{X(k)}(I)
\end{align}
is a random matrix in $\pos_d$, independent of $R(0)$.
We will now show that $\mathscr{L}(Z_{\infty})$ is the unique solution to a stochastic equation of type~\eqref{stoequationgeneral}.

\begin{proposition}\label{SAE}
Let $X \sim \BetaII_{d}(\alpha,\beta)$.
Consider the stochastic equation
\begin{align}\label{stoequation}
Z\stackrel{{\rm d}}{=}T_{X}(I+Z),
\end{align}
for a random matrix $Z$ in $\pos_d$ independent of $X$.
Then, $\mathscr{L}(Z_{\infty})$ is the unique solution to equation~\eqref{stoequation}, and $\mathscr{L}(Z_{\infty})=\BetaII_{d}(\alpha,\beta-\alpha)$.
\end{proposition}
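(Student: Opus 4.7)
The plan combines the uniqueness part of Proposition~\ref{finiteZ} with an explicit verification that $\BetaII_d(\alpha,\beta-\alpha)$ solves the distributional equation~\eqref{stoequation}.

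First I would check that Proposition~\ref{finiteZ} applies, i.e.\ that the series $Z_\infty$ in~\eqref{Zinfty} converges almost surely. By linearity of the operation $T_{R(0)}$ one has the identity
\[
A(n)=T_{R(0)}\Bigl(I+\sum_{k=1}^{n}T_{X(1)}\circ\cdots\circ T_{X(k)}(I)\Bigr)
\]
between the $n$-th partial sums; since $R(0)\in\pos_d$ almost surely, $T_{R(0)}$ is an a.s.\ homeomorphism of $\pos_d$, so the a.s.\ convergence of $A(n)$ established in Proposition~\ref{convergenceA} transfers to the partial sums defining $Z_\infty$. Proposition~\ref{finiteZ} then guarantees that $\mathscr{L}(Z_\infty)$ is the unique solution of~\eqref{stoequation}, so it only remains to exhibit one solution, the natural candidate being $\BetaII_d(\alpha,\beta-\alpha)$.

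For the verification, let $X\sim\BetaII_d(\alpha,\beta)$ and $Z\sim\BetaII_d(\alpha,\beta-\alpha)$ be independent and set $Y:=T_X(I+Z)$. By Proposition~\ref{prop:invRW}, the distribution of $T_X(I+Z)$ does not depend on the choice of $w$, so I take $w$ to be the square-root function. Conditioning on $X=x$ and performing the change of variables $Z\mapsto Y=x^{1/2}(I+Z)x^{1/2}$, whose Jacobian and transformed determinants $|Z|=|x|^{-1}|Y-x|$ and $|I+Z|=|x|^{-1}|Y|$ are controlled by the $\GL_d$-invariance of $\mu$, yields a conditional density for $Y$ supported on $\{Y\in\pos_d:Y-x\in\pos_d\}$. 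Integrating against the density of $X$, the density of $Y$ with respect to $\mu$ reduces to a constant multiple of
\[
|Y|^{(d+1)/2-\beta}\int_{0<x<Y}|x|^{\beta-(d+1)/2}\,|I+x|^{-(\alpha+\beta)}\,|Y-x|^{\alpha-(d+1)/2}\,\diff x.
\]

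The main obstacle is to show that this equals a constant multiple of the $\BetaII_d(\alpha,\beta-\alpha)$ density against $\mu$, namely $|Y|^{\alpha}\,|I+Y|^{-\beta}$, i.e.\ to prove the matrix integral identity
\[
\int_{0<x<Y}|x|^{\beta-(d+1)/2}\,|I+x|^{-(\alpha+\beta)}\,|Y-x|^{\alpha-(d+1)/2}\,\diff x \;=\; c\,|Y|^{\alpha+\beta-(d+1)/2}|I+Y|^{-\beta}.
\]
I would attack it via the matrix substitution $x=Y^{1/2}tY^{1/2}$ with $t\in(0,I)$ (whose scalar $d=1$ specialisation, through $u=x/(Y+x)$, gives an elementary beta integral), reducing the claim to a matrix Euler--hypergeometric integral that can be evaluated using the beta-gamma identities collected in Appendix~\ref{app:WishartBeta}. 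An alternative, more probabilistic, route is to bypass the integral altogether: represent $X=V^{-1/2}UV^{-1/2}$ and $Z=W^{-1/2}U'W^{-1/2}$ with $U,U'\sim\Wish_d(\alpha)$, $V\sim\Wish_d(\beta)$, $W\sim\Wish_d(\beta-\alpha)$ all independent, and invoke the matrix Lukacs/Olkin--Rubin splitting of a Wishart into a pair of independent Wisharts via a $\BetaI_d$ factor (Appendix~\ref{app:WishartBeta}) to reassemble $T_X(I+Z)$ as a ratio of a $\Wish_d(\alpha)$ matrix over an independent $\Wish_d(\beta-\alpha)$ matrix, which is by definition $\BetaII_d(\alpha,\beta-\alpha)$.
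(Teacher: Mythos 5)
Your overall plan — invoke Proposition~\ref{finiteZ} for uniqueness and then exhibit $\BetaII_d(\alpha,\beta-\alpha)$ as a solution — coincides with the paper's, and your convergence argument (passing from the a.s.\ convergence of $A(n)$ (Proposition~\ref{convergenceA}) to that of $Z_\infty$ via the linear a.s.\ homeomorphism $T_{R(0)}$, which is exactly the factorisation~\eqref{Ainfty3}) is correct. Where the proposal falls short is the verification that $\BetaII_d(\alpha,\beta-\alpha)$ actually satisfies~\eqref{stoequation}: you sketch two routes but carry out neither. Your ``probabilistic'' Route~B is, in spirit, the paper's proof: the paper works entirely in the $\odot$-algebra of $\ort_d$-invariant laws, first translating the claim via Proposition~\ref{relationswishart}-\eqref{it:relationswishart_betaI-II} into
\[
\BetaII_d(\alpha,\beta)\odot \invBetaI_d(\beta-\alpha,\alpha)=\BetaII_d(\alpha,\beta-\alpha),
\]
and then finishing in three one-line steps by Proposition~\ref{relationswishart}-\eqref{it:relationswishart_betaII}, the commutativity/associativity of $\odot$ (Lemma~\ref{convolution}), and identity~\eqref{invW} (which is where the Olkin--Rubin/Lukacs input enters, already packaged as Corollary~\ref{coro:wishartDecomposition}). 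Your formulation in terms of concrete Wishart factors $U,U',V,W$ is workable but would require you to track explicitly the independence statements delivered by Theorem~\ref{Lukacs's identity} when reassembling $T_X(I+Z)$; as written, ``reassemble \dots as a ratio'' is a plan, not a proof, and it is precisely this bookkeeping that the $\odot$-formalism is designed to bypass.

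Your Route~A is a genuinely different attack, by density computation. The integral identity you need,
\[
\int_{0<x<Y}|x|^{\beta-(d+1)/2}\,|I+x|^{-(\alpha+\beta)}\,|Y-x|^{\alpha-(d+1)/2}\,\diff x \;=\; B_d(\alpha,\beta)\,|Y|^{\alpha+\beta-(d+1)/2}\,|I+Y|^{-\beta},
\]
is indeed correct: after the substitution $x=Y^{1/2}tY^{1/2}$ the left-hand side becomes $|Y|^{\alpha+\beta-(d+1)/2}$ times the Euler integral representation of the matrix-argument ${}_2F_1(\alpha+\beta,\beta;\alpha+\beta;-Y)$, which collapses to $|I+Y|^{-\beta}$ by the matrix binomial identity ${}_2F_1(a,b;a;Z)=|I-Z|^{-b}$. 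But proving this requires importing the zonal-polynomial theory of matrix hypergeometric functions, machinery the paper deliberately avoids. So the concrete gap is that both proposed verifications stop at the decisive step: to complete the proof you must either carry out the $\odot$-algebra computation as the paper does, or supply (or correctly cite) the matrix ${}_2F_1$ Euler identity above.
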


\begin{rem}
Proposition~\ref{SAE} is a generalisation of the $d=1$ case addressed by Chamayou and Letac~\cite[Example 9]{CL} and Dufresne~\cite[Remark 1]{D}. 
\end{rem}

\begin{proof}[Proof of Proposition~\ref{SAE}]
As the series $A_\infty$ converges a.s.\ by Proposition~\ref{convergenceA}, the series $Z_{\infty}$ also does.
Then, by Proposition~\ref{finiteZ}, the distribution $\mathscr{L}(Z_{\infty})$ is the unique solution to equation~\eqref{stoequation}.

It remains to prove that~\eqref{stoequation} is satisfied for $Z\sim \BetaII_{d}(\alpha,\beta-\alpha)$.
By Proposition~\ref{relationswishart}-\eqref{it:relationswishart_betaI-II}, $Z\sim \BetaII_{d}(\alpha,\beta-\alpha)$ if and only if $I+Z\sim \invBetaI_d(\beta-\alpha,\alpha)$.
Therefore, using the notation $\odot$ from Appendix~\ref{app:WishartBeta}, our aim is to show that
\[
\BetaII_d(\alpha,\beta-\alpha) = \BetaII_d(\alpha,\beta) \odot \invBetaI_d(\beta-\alpha,\alpha).
\]
Using Proposition~\ref{relationswishart}-\eqref{it:relationswishart_betaII}, Lemma~\ref{convolution} and the identity~\eqref{invW}, we obtain
\[
\begin{split}
&\BetaII_d(\alpha,\beta) \odot \invBetaI_d(\beta-\alpha,\alpha)
=\left(\invWish_d(\beta) \odot \Wish_d(\alpha)\right) \odot \invBetaI_d(\beta-\alpha,\alpha) \\
=\, &\left(\invWish_{d}(\beta)\odot \invBetaI_{d}(\beta-\alpha,\alpha)\right)\odot \Wish_{d}(\alpha)
=\invWish_{d}(\beta-\alpha)\odot \Wish_{d}(\alpha)
=\BetaII_{d}(\alpha,\beta-\alpha),
\end{split}
\]
as desired.
\end{proof}

\begin{corollary}\label{Remark1}
$A_{\infty}$ has the inverse Wishart distribution $\invWish_{d}(\beta-\alpha)$.
\end{corollary}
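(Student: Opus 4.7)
The plan is to exploit the decomposition~\eqref{Ainfty3}, namely $A_\infty = T_{R(0)}(I+Z_\infty)$, together with Proposition~\ref{SAE}. By Proposition~\ref{SAE}, we have $Z_\infty \sim \BetaII_d(\alpha,\beta-\alpha)$, and by construction $Z_\infty$ is independent of $R(0) \sim \invWish_d(\beta)$. The first step is then to apply Proposition~\ref{relationswishart}-\eqref{it:relationswishart_betaI-II} to translate the law of $Z_\infty$ into a statement about $I+Z_\infty$: namely, $I+Z_\infty \sim \invBetaI_d(\beta-\alpha,\alpha)$.

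Next, I would read~\eqref{Ainfty3} in the $\odot$ notation of Appendix~\ref{app:WishartBeta}: since $R(0)$ and $I+Z_\infty$ are independent, the distribution of $A_\infty = T_{R(0)}(I+Z_\infty)$ is precisely
\begin{equation*}
\mathscr{L}(A_\infty) = \invWish_d(\beta) \odot \invBetaI_d(\beta-\alpha,\alpha).
\end{equation*}
It remains to identify this convolution. This is already essentially done in the proof of Proposition~\ref{SAE}, where identity~\eqref{invW} is invoked to assert exactly
\begin{equation*}
\invWish_d(\beta)\odot \invBetaI_d(\beta-\alpha,\alpha) = \invWish_d(\beta-\alpha),
\end{equation*}
yielding the claim.

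There is essentially no obstacle here: the hard analytic work has been packaged into Proposition~\ref{SAE} and the beta--Wishart relations of Appendix~\ref{app:WishartBeta}. The only subtlety worth flagging explicitly is the independence of $R(0)$ and $Z_\infty$, which holds because $Z_\infty$ is a (measurable, a.s.\ convergent) function of the increments $(X(n), n\geq 1)$ only, and these are independent of $R(0)$; this is what licenses the use of the $\odot$ operation in the step above.
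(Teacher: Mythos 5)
Your proof is correct and follows essentially the same route as the paper's: translate $Z_\infty\sim\BetaII_d(\alpha,\beta-\alpha)$ into $I+Z_\infty\sim\invBetaI_d(\beta-\alpha,\alpha)$ via Proposition~\ref{relationswishart}-\eqref{it:relationswishart_betaI-II}, then apply the identity~\eqref{invW} to the decomposition~\eqref{Ainfty3}. The only difference is that you spell out the independence of $R(0)$ and $Z_\infty$, which the paper leaves implicit in the $\odot$ notation.
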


\begin{proof}
We have $Z_{\infty}\sim\BetaII_{d}(\alpha,\beta-\alpha)$ by Proposition~\ref{SAE}, hence $I+Z_\infty\sim \invBetaI_{d}(\beta-\alpha,\alpha)$ by Proposition~\ref{relationswishart}-\eqref{it:relationswishart_betaI-II}.
It then follows from~\eqref{Ainfty3} and~\eqref{invW} that
\begin{align*}
A_{\infty}
=T_{R(0)}(I+Z_{\infty})
\sim \invWish_{d}(\beta)\odot \invBetaI_{d}(\beta-\alpha,\alpha)
=\invWish_{d}(\beta-\alpha),
\end{align*}
as desired.
\end{proof}

Finally, combining Proposition~\ref{SAE} with Corollary~\ref{coro:invariantBothProcessesGen}, we obtain the $\ort_d$-invariant stationary distributions of the matrix Kesten recursions of Section~\ref{subsec:stochasticRecursions} in the case of Beta type II increments.

\begin{corollary}\label{coro:invariantBothProcesses}
The Beta type II distribution $\BetaII_{d}(\alpha,\beta-\alpha)$ is the unique $\ort_d$-invariant stationary distribution for both processes $\xi$ and $\xi'$, defined in~\eqref{srec} and~\eqref{srecGL}, respectively, when the i.i.d.\ increments $(X(n),n\ge1)$ have the Beta type II distribution $\BetaII_{d}(\alpha,\beta)$.
\end{corollary}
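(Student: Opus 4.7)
The statement is essentially a direct synthesis of two results already established in the same section, so the plan is almost entirely bookkeeping. The plan is to simply invoke Corollary~\ref{coro:invariantBothProcessesGen} together with Proposition~\ref{SAE}.

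First, I would note that Proposition~\ref{convergenceA}, applied under the standing assumption $\beta-\alpha>\frac{d-1}{2}$, guarantees the almost sure convergence of the series $A_\infty$ in~\eqref{Ainfty2}, and hence also the convergence of the series $Z_\infty$ in~\eqref{Zinfty} (since $A_\infty = T_{R(0)}(I+Z_\infty)$ and $R(0)\in\pos_d$ almost surely, the positive-definiteness of $A_\infty$ forces the sum defining $Z_\infty$ to converge). Once $Z_\infty$ is known to converge almost surely, the hypothesis of Corollary~\ref{coro:invariantBothProcessesGen} is satisfied, and that corollary tells us that $\mathscr{L}(Z_\infty)$ is the unique $\ort_d$-invariant stationary distribution for both Markov processes $\xi$ and $\xi'$.

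Then I would appeal to Proposition~\ref{SAE} to identify the law: $\mathscr{L}(Z_\infty)=\BetaII_d(\alpha,\beta-\alpha)$ in the case of Beta type II increments. Combining these two ingredients yields precisely the claim.

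There is no real obstacle here; the substantive work has already been done in Proposition~\ref{finiteZ} (the general criterion for convergence implying uniqueness of the invariant distribution), in Corollary~\ref{coro:invariantBothProcessesGen} (extending the statement from $\xi$ to $\xi'$ via the observation $T_Y(X)\stackrel{\mathrm{d}}{=}T_X(Y)$ for jointly $\ort_d$-invariant pairs), and in Proposition~\ref{SAE} (the beta–gamma-algebra calculation identifying the fixed point of the map $\mathscr{L}\mapsto \mathscr{L}(T_X(I+\,\cdot\,))$). The only thing to double-check is that Proposition~\ref{SAE}'s convergence hypothesis is actually in force for our parameter range, which it is by Proposition~\ref{convergenceA}.
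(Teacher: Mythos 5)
Your proposal is correct and follows exactly the route the paper intends: the corollary is stated in the paper as an immediate consequence of combining Proposition~\ref{SAE} (which identifies $\mathscr{L}(Z_\infty)=\BetaII_d(\alpha,\beta-\alpha)$) with Corollary~\ref{coro:invariantBothProcessesGen} (uniqueness of the $\ort_d$-invariant stationary law for both $\xi$ and $\xi'$ once $Z_\infty$ converges a.s.), and you supply the same two ingredients in the same order, with the convergence hypothesis verified via Proposition~\ref{convergenceA}.
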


\vspace{2mm}

\subsection*{Acknowledgements}
The authors thank the anonymous referees for their helpful comments and suggestions, which have led to a much improved version of the paper.

\appendix

\section{Random walk constructions}
\label{app:InvRW}

The main goal of this appendix is to prove Proposition~\ref{prop:invRW}.
Let us start with a few preliminary lemmas.

\begin{lemma}
\label{lemma:ortinv}
Let $X$ be an $\ort_{d}$-invariant random matrix in $\pos_d$.
Let $Z$ be any random variable independent of $X$.
Let $O$ be a random matrix in $\ort_d$ that is a deterministic function of $Z$.
Then, $(O^{\top} X O, Z) \stackrel{\mathrm{d}}{=} (X,Z)$.
In particular, $O^{\top}XO$ is independent of $Z$ and $O$.
\end{lemma}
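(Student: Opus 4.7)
The plan is to reduce the joint distributional identity to two facts: the $\ort_d$-invariance of $X$, applied fiberwise over the value of $Z$, and the independence of $X$ from $Z$. Concretely, write $O = g(Z)$ for some measurable map $g$ into $\ort_d$ (such a $g$ exists by assumption). I will test the equality in distribution against an arbitrary bounded measurable function $f$ on $\pos_d \times \mathcal{Z}$, where $\mathcal{Z}$ denotes the codomain of $Z$.

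First I would compute, using independence of $X$ and $Z$ via Fubini,
\[
\mathbb{E}\bigl[f(O^\top X O,\, Z)\bigr]
= \int_{\mathcal{Z}} \mathbb{E}\bigl[f\bigl(g(z)^\top X g(z),\, z\bigr)\bigr] \, \mathbb{P}_Z(\diff z).
\]
For each fixed $z \in \mathcal{Z}$, the matrix $g(z)$ is a deterministic element of $\ort_d$, so the $\ort_d$-invariance of $X$ gives $g(z)^\top X g(z) \stackrel{\mathrm{d}}{=} X$ and hence
\[
\mathbb{E}\bigl[f\bigl(g(z)^\top X g(z),\, z\bigr)\bigr] = \mathbb{E}\bigl[f(X, z)\bigr].
\]
Plugging this back and invoking independence of $X$ and $Z$ one more time to recombine the integrals yields $\mathbb{E}[f(O^\top X O, Z)] = \mathbb{E}[f(X, Z)]$, which is the desired joint equality in distribution.

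For the ``in particular'' clause, independence of $O^\top X O$ from $Z$ is immediate from the joint distributional identity and independence of $X$ from $Z$. Independence from $O$ then follows because $O = g(Z)$ is $\sigma(Z)$-measurable, so independence from $\sigma(Z)$ implies independence from any function of $Z$.

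There is no real obstacle here; the only delicate point is the order in which one applies independence and invariance. One must first condition on $Z$ so that $g(Z)$ becomes a constant matrix (allowing the use of $\ort_d$-invariance of $X$), and only then average over $Z$. Trying to apply $\ort_d$-invariance directly without conditioning is incorrect because $O$ is random and generally not independent of anything related to $Z$.
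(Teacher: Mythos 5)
Your argument is essentially identical to the paper's: you condition on $Z$ (you phrase it as a Fubini integration over the law of $Z$, the paper as a nested conditional expectation), use that $g(z)$ is a fixed orthogonal matrix to invoke $\ort_d$-invariance, recombine via independence, and deduce the ``in particular'' clause from $O$ being $\sigma(Z)$-measurable. The proof is correct and there is no substantive difference from the paper.
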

\begin{proof}
Conditioned on $Z$, the random matrix $O$ is constant, as a deterministic function of $Z$.
Since $X$ is $\ort_d$-invariant and independent of $Z$, we then have
\begin{align*}
\begin{split}
\mathbb{E}\left[f(O^{\top}XO) g(Z) \right]
&=\mathbb{E}\left[ \mathbb{E} \left[  f(O^{\top} X O) g(Z) \bigm\vert Z\right]\right] \\
&=\mathbb{E}\left[ g(Z)\mathbb{E} \left[  f(X) \bigm\vert Z\right]\right]
=\mathbb{E}\left[ f(X) \right]
\mathbb{E}\left[ g(Z) \right]
\end{split}
\end{align*}
for any bounded and measurable functions $f,g$.
This proves that $(O^{\top} X O, Z) \stackrel{\mathrm{d}}{=} (X,Z)$ and that $O^{\top} X O$ is independent of $Z$.
Since $O$ is a deterministic function of $Z$, $O^{\top} X O$ is also independent of $O$.
\end{proof}

\begin{lemma}
\label{lemma:prodRuleDoesNotMatter0}
Let $A$ and $B$ be random matrices in $\GL_d$ with $A^{\top}A = B^{\top}B$.
Let $X$ be an $\ort_d$-invariant random matrix in $\pos_d$, independent of $(A,B)$.
Then, $(A^{\top} X A, A, B) \stackrel{\mathrm{d}}{=} (B^{\top} X B, A, B)$.
\end{lemma}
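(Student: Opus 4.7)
The plan is to reduce the claim to Lemma~\ref{lemma:ortinv} by constructing an orthogonal matrix that conjugates $A$ to $B$. Specifically, since $A, B \in \GL_d$ are invertible a.s., set $O := B A^{-1}$. Then the hypothesis $A^\top A = B^\top B$ yields
\[
O^\top O = (A^{-1})^\top B^\top B\, A^{-1} = (A^{-1})^\top A^\top A\, A^{-1} = I,
\]
so $O$ takes values in $\ort_d$. Moreover $O$ is a deterministic (measurable) function of the pair $Z := (A,B)$.

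Next, I would apply Lemma~\ref{lemma:ortinv} with this $Z$ and $O$: since $X$ is $\ort_d$-invariant and independent of $(A,B)$, the lemma gives $(O^\top X O,\, A, B) \stackrel{\mathrm{d}}{=} (X, A, B)$. Now apply the measurable map $(Y, A, B) \mapsto (A^\top Y A, A, B)$ to both sides; distributional equality is preserved, so
\[
(A^\top O^\top X O A,\, A, B) \stackrel{\mathrm{d}}{=} (A^\top X A,\, A, B).
\]
Finally, by construction $B = O A$, hence $A^\top O^\top X O A = B^\top X B$, which gives the desired identity $(B^\top X B, A, B) \stackrel{\mathrm{d}}{=} (A^\top X A, A, B)$.

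The only subtlety is the invertibility of $A$ (and $B$), which is assumed by $A, B \in \GL_d$, making $O := BA^{-1}$ well-defined; beyond this, the argument is a direct application of the preceding lemma, so I do not anticipate any serious obstacle.
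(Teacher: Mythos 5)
Your proof is correct and takes essentially the same approach as the paper: construct an orthogonal matrix from $A$ and $B$, invoke Lemma~\ref{lemma:ortinv}, and push forward through a measurable conjugation map. The only cosmetic difference is that you set $O := BA^{-1}$ and conjugate by $A$, whereas the paper sets $O := AB^{-1}$ and conjugates by $B$; the two choices are symmetric and lead to the same conclusion.
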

\begin{proof}
The matrix $O:= AB^{-1}$ is orthogonal, since, by hypothesis,
\[
O^{\top} O
= (B^{\top})^{-1} A^{\top} A B^{-1}
= (B^{\top})^{-1} B^{\top} B B^{-1}
= I.
\]
Moreover, $O$ is a deterministic function of $Z:=(A,B)$, which, by hypothesis, is independent of $X$.
We deduce from Lemma~\ref{lemma:ortinv} that $(O^{\top} X O, A, B) \stackrel{\mathrm{d}}{=} (X,A,B)$.
Applying the measurable function $f(x,a,b):= (b^{\top}xb,a,b)$, for $x,a,b\in\GL_d$, to this distributional equality, we obtain
\[
(A^{\top} X A, A, B)
= f(O^{\top} X O, A, B)
\stackrel{\mathrm{d}}{=} f(X,A,B)
= (B^{\top} X B, A, B) ,
\]
as desired.
\end{proof}

Recall from Section~\ref{subsec:InvRW} that $w$ denotes any measurable function $w\colon \pos_d\to\GL_d$ such that $x=w(x)^{\top} w(x)$ for all $x\in\pos_d$.
Given $y\in\pos_d$, recall also the `multiplication operation' by $y$ defined in~\eqref{eq:mult}, denoted by $\mult_{y}\colon \pos_d \to \pos_d$.

\begin{proposition}
\label{prop:T}
Let $X$ and $Y$ be independent random matrices in $\pos_d$, and assume that $X$ is $\ort_d$-invariant.
Then:
\begin{enumerate}[(i)]
\item \label{it:T_prodRuleDoesNotMatter} The distribution of $T^{w}_Y(X)$ is the same for any choice of the function $w$ satisfying~\eqref{eq:splitFn}.
\item
\label{it:T_inverse} The distributions of $[T^{w}_Y(X)]^{-1}$ and $T^{w}_{Y^{-1}}(X^{-1})$ are equal.
\item
\label{it:T_orthInv} If, in addition, $Y$ is $\ort_d$-invariant, then $T^{w}_Y(X)$ also is.
\end{enumerate}
\end{proposition}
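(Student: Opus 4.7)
The plan is to reduce all three parts to the single algebraic input of Lemma~\ref{lemma:prodRuleDoesNotMatter0}, together with two easy auxiliary observations: first, that the inverse of an $\ort_d$-invariant matrix is again $\ort_d$-invariant (which follows immediately from $(k^\top X k)^{-1} = k^\top X^{-1} k$ for $k\in\ort_d$), and second, that independence between $X$ and $Y$ transfers to independence between any measurable functions thereof.

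For part~\eqref{it:T_prodRuleDoesNotMatter}, given two candidates $w_1$ and $w_2$ satisfying~\eqref{eq:splitFn}, I would set $A := w_1(Y)$ and $B := w_2(Y)$. Both are measurable functions of $Y$, which is independent of the $\ort_d$-invariant $X$, and $A^\top A = Y = B^\top B$. Lemma~\ref{lemma:prodRuleDoesNotMatter0} applies directly and yields $T^{w_1}_Y(X) = A^\top X A \stackrel{\mathrm{d}}{=} B^\top X B = T^{w_2}_Y(X)$.

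For part~\eqref{it:T_inverse}, I would rewrite $[T^w_Y(X)]^{-1} = w(Y)^{-1} X^{-1} (w(Y)^\top)^{-1} = A^\top X^{-1} A$ with $A := (w(Y)^\top)^{-1}$, so that $A^\top A = Y^{-1}$. Setting $B := w(Y^{-1})$, one also has $B^\top B = Y^{-1}$, and both $A$ and $B$ are measurable functions of $Y$. By the auxiliary observation, $X^{-1}$ is $\ort_d$-invariant and independent of $Y$, so Lemma~\ref{lemma:prodRuleDoesNotMatter0} applied with $X^{-1}$ in the role of $X$ gives $[T^w_Y(X)]^{-1} \stackrel{\mathrm{d}}{=} B^\top X^{-1} B = T^w_{Y^{-1}}(X^{-1})$.

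Part~\eqref{it:T_orthInv} is the most delicate step, since a naive application of Lemma~\ref{lemma:prodRuleDoesNotMatter0} to $w(Y)k$ and $w(Y)$ fails (they square to different matrices). I would instead proceed in two moves. Fix $k\in\ort_d$; since $Y$ is $\ort_d$-invariant and independent of $X$, we have $(X, k^\top Y k) \stackrel{\mathrm{d}}{=} (X, Y)$, and applying the measurable map $(x,y)\mapsto w(y)^\top x w(y)$ on both sides yields $T^w_{k^\top Y k}(X) \stackrel{\mathrm{d}}{=} T^w_Y(X)$. Next, rewrite $k^\top T^w_Y(X) k = (w(Y)k)^\top X (w(Y)k)$ and observe that $A := w(Y)k$ and $B := w(k^\top Y k)$ are both measurable functions of $Y$ with $A^\top A = k^\top Y k = B^\top B$. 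A final application of Lemma~\ref{lemma:prodRuleDoesNotMatter0} gives $k^\top T^w_Y(X) k \stackrel{\mathrm{d}}{=} T^w_{k^\top Y k}(X)$, and chaining with the first distributional equality yields the $\ort_d$-invariance of $T^w_Y(X)$. The only non-routine point throughout is recognising in part~\eqref{it:T_orthInv} that one must conjugate $Y$ by $k$ first before the two candidate `square roots' of the same matrix emerge and Lemma~\ref{lemma:prodRuleDoesNotMatter0} becomes applicable.
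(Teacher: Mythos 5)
Your argument is correct, and while the top-level plan (treat all three parts by way of Lemma~\ref{lemma:prodRuleDoesNotMatter0}) matches the paper's, the internal reductions for parts~\eqref{it:T_inverse} and~\eqref{it:T_orthInv} are genuinely different. The paper proves~\eqref{it:T_inverse} and~\eqref{it:T_orthInv} by first invoking~\eqref{it:T_prodRuleDoesNotMatter} to reduce to the square-root function and then exploiting two special properties of that particular $w$: the almost-sure identity $[Y^{1/2}XY^{1/2}]^{-1}=(Y^{-1})^{1/2}X^{-1}(Y^{-1})^{1/2}$ for~\eqref{it:T_inverse}, and the commutation $k^\top Y^{1/2}k=(k^\top Y k)^{1/2}$ for~\eqref{it:T_orthInv}, the latter combined with the joint invariance $(k^\top X k, k^\top Y k)\stackrel{\mathrm{d}}{=}(X,Y)$ in a single move. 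You instead stay with a general $w$ throughout and apply Lemma~\ref{lemma:prodRuleDoesNotMatter0} directly each time: in~\eqref{it:T_inverse} with $A:=(w(Y)^\top)^{-1}$ and $B:=w(Y^{-1})$ (note $A$ need not be of the form $w'(Y^{-1})$, which the lemma happily tolerates), and in~\eqref{it:T_orthInv} by cleanly separating the roles of the two invariances — the $\ort_d$-invariance of $Y$ gives $T^w_{k^\top Y k}(X)\stackrel{\mathrm{d}}{=}T^w_Y(X)$, and the $\ort_d$-invariance of $X$ feeds into the lemma via $A:=w(Y)k$, $B:=w(k^\top Y k)$. The paper's route is marginally shorter because the square root has these convenient deterministic identities; yours is more uniform, keeps the three parts logically independent of one another, and makes it transparent that no structural property of any specific splitting $w$ is being used beyond $w(y)^\top w(y)=y$. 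Both are valid.
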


\begin{proof}
\begin{enumerate}[(i)]
\item Let $w,w'$ be any two functions satisfying~\eqref{eq:splitFn}.
Letting $A:= w(Y)$ and $B:= w'(Y)$, we have that $X$ is independent of $(A,B)$ and, by~\eqref{eq:splitFn}, $A^{\top}A = Y = B^{\top} B$.
Therefore, by Lemma~\ref{lemma:prodRuleDoesNotMatter0}, we have $A^{\top} X A \stackrel{\mathrm{d}}{=} B^{\top} X B$, i.e.\ $T^{w}_Y(X) \stackrel{\mathrm{d}}{=} T^{w'}_Y(X)$.
\item If $w$ is the square root function, then 
\[
[T^{w}_Y(X)]^{-1}
= \big[Y^{1/2} X Y^{1/2}\big]^{-1}
= (Y^{-1})^{1/2} X^{-1} (Y^{-1})^{1/2}
= T^{w}_{Y^{-1}}(X^{-1}).
\]
Noticing that $X^{-1}$ is $\ort_d$-invariant (since $X$ is), the claim for general $w$ follows from~\eqref{it:T_prodRuleDoesNotMatter}.
\item By~\eqref{it:T_prodRuleDoesNotMatter}, without loss of generality we may take $w$ to be the square root function.
For any $k\in\ort_d$, we have
\[
\begin{split}
k^{\top} T^w_Y(X) k
&= k^{\top} Y^{1/2} kk^{\top} X kk^{\top} Y^{1/2} k \\
&= (k^{\top} Y k)^{1/2} (k^{\top} X k) (k^{\top} Y k)^{1/2}
= T^w_{k^{\top} Y k}(k^{\top} X k)
\stackrel{\mathrm{d}}{=} T^w_Y(X).
\end{split}
\]
Here, the last (distributional) equality follows from the fact that, as $X$ and $Y$ are independent and both $\ort_d$-invariant, $(k^{\top} X k, k^{\top} Y k)$ equals in distribution $(X,Y)$.
\qedhere
\end{enumerate}
\end{proof}

\begin{rem}
Instead of the operator $\mult_y$ defined in~\eqref{eq:mult}, consider the alternative operator
\begin{equation}
\label{eq:multmod}
\multmod_y\colon \pos_d \to \pos_d ,
\qquad
\multmod_y(x) := w(y) \, x\, w(y)^{\top}, \quad x\in\pos_{d}.
\end{equation}
Clearly, $\mult_y = \multmod_y$ when $w$ is the square root function.
However, Proposition~\ref{prop:T} is, for a general $w$, no longer true when using $\multmod$ instead of $\mult$.
For example, let $X$ be a Wishart matrix and let $Y$ be an inverse Wishart matrix, independent of $X$.
Then, when $w$ is the square root function, the distribution of $\multmod_{Y}(X) = \mult_{Y}(X)$ is Beta type II (see Proposition~\ref{relationswishart}-\eqref{it:relationswishart_betaII}).
On the other hand, when $w$ is the Cholesky function, $\multmod_{Y}(X)$ has an explicit distribution (see e.g.~\cite[Theorem~5.4.2]{GN}), which is not Beta II, nor is it $\ort_d$-invariant.
\end{rem}

\begin{proof}[Proof of Proposition~\ref{prop:invRW}]
For any $w$, $\RW$ is clearly a Markov process with initial state $M$.
Furthermore, by Proposition~\ref{prop:T}-\eqref{it:T_prodRuleDoesNotMatter}, the time-$n$ transition kernel does not depend on the choice of the function $w$, for any $n$.
As a result, the law of the process $\RW$ does not depend on the choice of $w$ either.

We now show that, for any fixed function $w$ satisfying~\eqref{eq:splitFn}, $\RWmod$ is a $\GL_d$-invariant Markov process with the same law as $\RW$.
Fix $a\in \GL_d$ and, for any $n\geq 0$, let 
\[
A(n):= w(X(n)) \cdots w(X(1)) w(M) a,
\qquad
B(n):= w(a^{\top} \RWmod(n) a) .
\]
We then have
\[
A(n)^{\top} A(n)
= a^{\top} \RWmod(n) a
= B(n)^{\top} B(n) .
\]
Since $X(n)$ is independent of $(A(n-1), B(n-1))$, it follows from Lemma~\ref{lemma:prodRuleDoesNotMatter0} that
\[
(A(n-1)^{\top} X(n) A(n-1), B(n-1))
\stackrel{\mathrm{d}}{=}
(B(n-1)^{\top} X(n) B(n-1), B(n-1))
\]
for all $n\geq 1$.
Now, we have
\begin{align*}
A(n-1)^{\top} X(n) A(n-1)
&= A(n)^{\top} A(n)
= a^{\top} \RWmod(n) a , \\
B(n-1)^{\top} X(n) B(n-1)
&= T^w_{a^{\top}\RWmod(n-1)a}(X(n)) .
\end{align*}
Since $a^{\top} \RWmod(n-1) a$ is a deterministic function of $B(n-1)$, we then have
\begin{equation}
\label{eq:equalityInLawRWs}
(a^{\top} \RWmod(n) a, a^{\top} \RWmod(n-1) a)
\stackrel{\mathrm{d}}{=}
(T^w_{a^{\top}\RWmod(n-1)a}(X(n)), a^{\top} \RWmod(n-1) a) .
\end{equation}
It follows that $(a^{\top}\RWmod(n)a, n\geq 0)$ is a Markov process whose time-$n$ transition kernel does not depend on the choice of $a\in\GL_d$, for all $n$: in other words, $\RWmod$ is $\GL_d$-invariant.
Letting $a=I$ in~\eqref{eq:equalityInLawRWs}, we also see that $\RWmod$ has the same time-$n$ transition kernel as $\RW$, for all $n$; since $\RWmod$ and $\RW$ also have the same initial state $\RWmod(0)=M=\RW(0)$, we conclude that they have the same law.
\end{proof}

\section{Wishart and matrix Beta distributions}
\label{app:WishartBeta}

In this appendix we collect some important properties that relate Wishart and matrix Beta distributions (defined in Section~\ref{subsec:orthInvariant}) and present their Cholesky decompositions.

By Proposition~\ref{prop:T}, if $Y\sim\eta$ and $X\sim\nu$ are two independent random matrices in $\pos_{d}$ with $\ort_{d}$-invariant distributions, then the distribution of $\mult_{Y}(X)$ is the same for any choice of $w$ and is itself $\ort_{d}$-invariant; throughout, we denote such a distribution by $\eta\odot\nu$.
In fact, the operation $\odot$ on the space of $\ort_d$-invariant distributions may be expressed in terms of the usual convolution with respect to the Haar measure on $\GL_{d}$ (see e.g.~\cite[Lemma 1.1.1]{T}).
This, in particular, implies:
 
\begin{lemma}\label{convolution}
The operation $\odot$ is commutative and associative.
\end{lemma}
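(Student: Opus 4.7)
The plan is to prove commutativity and associativity separately, using different parts of Proposition~\ref{prop:T} to choose a convenient $w$ in each case.

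For commutativity, let $X,Y$ be independent and $\ort_d$-invariant. Both $\mult_Y(X)$ and $\mult_X(Y)$ are themselves $\ort_d$-invariant by Proposition~\ref{prop:T}\,\eqref{it:T_orthInv}. I would then invoke the standard fact that an $\ort_d$-invariant probability measure on $\pos_d$ is determined by the induced law on the eigenvalues: indeed, any bounded measurable test function $f$ may be replaced, when integrated against such a measure, by its $\ort_d$-average $\tilde f(x):=\int_{\ort_d} f(k^\top x k)\,dk$, which depends only on the eigenvalues of $x$. It thus suffices to check that $\mult_Y(X)$ and $\mult_X(Y)$ have the same spectral law. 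Taking $w$ to be the square-root function (legitimate by Proposition~\ref{prop:T}\,\eqref{it:T_prodRuleDoesNotMatter}), we have $\mult_Y(X)=Y^{1/2}XY^{1/2}$, whose eigenvalues agree with those of $YX$ via the cyclic identity $\mathrm{spec}(AB)=\mathrm{spec}(BA)$ applied to $A=Y^{1/2}$, $B=Y^{1/2}X$. Symmetrically, $\mult_X(Y)$ is cospectral with $XY$, and since $\mathrm{spec}(XY)=\mathrm{spec}(YX)$, commutativity follows.

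For associativity, I would invoke Proposition~\ref{prop:T}\,\eqref{it:T_prodRuleDoesNotMatter} again to take $w$ to be the Cholesky map, writing $x=U(x)^\top U(x)$ with $U(x)$ upper triangular and having positive diagonal entries. By uniqueness of the Cholesky decomposition,
\[
\mult_X(Y) = U(X)^\top\,U(Y)^\top U(Y)\,U(X) = \bigl(U(Y)U(X)\bigr)^\top \bigl(U(Y)U(X)\bigr)
\]
implies $U(\mult_X(Y)) = U(Y)\,U(X)$, since the product of two upper triangular matrices with positive diagonal is again of this form. In other words, under the Cholesky encoding, $\mult$ becomes ordinary matrix multiplication in the group of upper triangular matrices with positive diagonal. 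Iterating, both $\mult_{\mult_{X_1}(X_2)}(X_3)$ and $\mult_{X_1}(\mult_{X_2}(X_3))$ correspond to the Cholesky factor $U(X_3)U(X_2)U(X_1)$, and hence are literally equal as random matrices (not merely equal in law). Passing back to distributions yields associativity of $\odot$.

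The main point requiring some care is the claim that an $\ort_d$-invariant probability measure on $\pos_d$ is determined by its spectral distribution, used in the commutativity step; the remainder is a direct packaging of Proposition~\ref{prop:T} together with the classical facts that $AB$ and $BA$ share their spectrum, and that matrix multiplication is associative.
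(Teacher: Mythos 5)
Your proof is correct, but it follows a genuinely different route from the paper's. The paper disposes of Lemma~\ref{convolution} in a single line by appealing to the fact (citing~\cite[Lemma 1.1.1]{T}) that $\odot$ can be identified with ordinary convolution of $\ort_d$-bi-invariant measures on $\GL_d$ with respect to Haar measure; since $(\GL_d,\ort_d)$ is a Gelfand pair, associativity and commutativity of that convolution are automatic. You instead prove the two properties separately and from scratch. For commutativity you use that an $\ort_d$-invariant law on $\pos_d$ is determined by its spectral law, together with the pathwise identity $\mathrm{spec}(Y^{1/2}XY^{1/2})=\mathrm{spec}(YX)=\mathrm{spec}(XY)=\mathrm{spec}(X^{1/2}YX^{1/2})$ and Proposition~\ref{prop:T}\,\eqref{it:T_orthInv} to guarantee both sides are $\ort_d$-invariant. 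For associativity you exploit the Cholesky choice of $w$ (legitimate by Proposition~\ref{prop:T}\,\eqref{it:T_prodRuleDoesNotMatter}), under which the map $x\mapsto U(x)$ intertwines $\mult$ with multiplication in the group of upper triangular matrices with positive diagonal; this makes the two triple products $U(X_1)^\top U(X_2)^\top X_3\,U(X_2)U(X_1)$ literally equal. All the steps check out, including the subtle point that when you unfold $(\eta_1\odot\eta_2)\odot\eta_3$ you may realise the intermediate variable as $T^U_{X_1}(X_2)$ (independent of $X_3$) and again apply the Cholesky $w$. Your argument is more elementary and self-contained than the paper's one-line reference, at the cost of requiring two distinct mechanisms; the only ingredient you use that the paper does not spell out is the standard fact that an $\ort_d$-invariant probability measure on $\pos_d$ is determined by its eigenvalue law, which you justify correctly via averaging over $\ort_d$.
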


Recall also the alternative operator $\multmod_{y}\colon \pos_d \to \pos_d$, for $y\in\pos_d$, defined in~\eqref{eq:multmod}.
The next proposition is a collection of known results on matrix variate distributions, see e.g.~\cite[Chapter 5]{GN} and~\cite[Chapter 5]{Ma}.
For item~\eqref{it:relationswishart_betaI}, see also~\cite[Theorem 3.1-(i)]{CaL}; item~\eqref{it:relationswishart_betaII} follows from~\cite[Theorem 5.2.5]{GN} together with Proposition~\ref{prop:T}-\eqref{it:T_prodRuleDoesNotMatter}.

\begin{proposition}\label{relationswishart}
Let $\alpha>\frac{d-1}{2}$ and $\beta>\frac{d-1}{2}$.
Consider independent Wishart matrices $Y_{\alpha}\sim \Wish_{d}(\alpha)$ and $Y_{\beta}\sim \Wish_{d}(\beta)$.
Then:
\begin{enumerate}[(i)]
	\item \label{it:relationswishart_sum} $Y_{\alpha}+Y_{\beta}\sim \Wish_{d}(\alpha+\beta)$.
	\item \label{it:relationswishart_betaI} $\multmod_{(Y_{\alpha}+Y_{\beta})^{-1}}(Y_{\alpha})\sim \BetaI_{d}(\alpha,\beta)$ for any choice of $w$.
	\item \label{it:relationswishart_betaII} $\mult_{Y_{\beta}^{-1}}(Y_{\alpha})\sim \BetaII_{d}(\alpha,\beta)$ for any choice of $w$, i.e.\ $\BetaII_{d}(\alpha,\beta)=\invWish_{d}(\beta)\odot \Wish_{d}(\alpha)$.
	\item \label{it:relationswishart_betaIIinv} If $Z\sim \BetaII_{d}(\alpha,\beta)$, then $Z^{-1}\sim \BetaII_{d}(\beta,\alpha)$.
	\item \label{it:relationswishart_betaI-II} If $U\sim\BetaI_{d}(\alpha,\beta)$, then $V:=U^{-1}-I\sim\BetaII_{d}(\beta,\alpha)$.
	Conversely, if $V\sim\BetaII_{d}(\beta,\alpha)$, then $U:=(I+V)^{-1}\sim\BetaI_{d}(\alpha,\beta)$.
\end{enumerate}
\end{proposition}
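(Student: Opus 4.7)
The plan is to derive every item from the explicit joint density of $(Y_\alpha,Y_\beta)$ together with two basic properties of the reference measure $\mu$ (invariance under $y\mapsto a^\top y a$ for $a\in\GL_d$ and under $y\mapsto y^{-1}$) and the matrix gamma identity
\begin{equation*}
\int_{\pos_d}|y|^\alpha e^{-\tr[sy]}\mu(\diff y)=|s|^{-\alpha}\Gamma_d(\alpha),\qquad s\in\pos_d.
\end{equation*}
Applying this identity to the Wishart density yields the Laplace transform $\mathbb{E}[e^{-\tr[sY_\alpha]}]=|I+s|^{-\alpha}$ for $Y_\alpha\sim\Wish_d(\alpha)$; independence of $Y_\alpha$ and $Y_\beta$ combined with uniqueness of Laplace transforms then gives (i).

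For (iii), I would change variables $(Y_\alpha,Y_\beta)\mapsto(W,Y_\beta)$ with $W:=Y_\beta^{-1/2}Y_\alpha Y_\beta^{-1/2}$, so that at fixed $Y_\beta$ the $\GL_d$-invariance of $\mu$ gives $\mu(\diff Y_\alpha)=\mu(\diff W)$, while $|Y_\alpha|=|Y_\beta||W|$ and $\tr[Y_\alpha]=\tr[Y_\beta W]$. The joint density becomes $\Gamma_d(\alpha)^{-1}\Gamma_d(\beta)^{-1}|W|^\alpha|Y_\beta|^{\alpha+\beta}e^{-\tr[Y_\beta(I+W)]}$, and integrating out $Y_\beta$ via $Y_\beta=(I+W)^{-1/2}Z(I+W)^{-1/2}$ produces the $\BetaII_d(\alpha,\beta)$ density $B_d(\alpha,\beta)^{-1}|W|^\alpha|I+W|^{-(\alpha+\beta)}$ for $W$; Proposition~\ref{prop:T}(i) then promotes this from the square-root choice of $w$ to an arbitrary $w$. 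Item (ii) is obtained in parallel by the change of variables $(Y_\alpha,Y_\beta)\mapsto(V,U)$ with $U:=Y_\alpha+Y_\beta$ and $V:=U^{-1/2}Y_\alpha U^{-1/2}$, which is the matrix analogue of the classical beta-gamma decomposition and produces independent variables $U\sim\Wish_d(\alpha+\beta)$ and $V\sim\BetaI_d(\alpha,\beta)$ (this simultaneously reconfirms (i)).

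Item (iv) is then a short composition: by (iii), $Z=\mult_{Y_\beta^{-1}}(Y_\alpha)$, so Proposition~\ref{prop:T}(ii) gives $Z^{-1}\stackrel{\mathrm{d}}{=}\mult_{Y_\beta}(Y_\alpha^{-1})$, and commutativity of $\odot$ (Lemma~\ref{convolution}) together with (iii) applied with the parameters swapped identifies this law as $\BetaII_d(\beta,\alpha)$. For (v), I would work directly with Lebesgue densities: if $U=(I+V)^{-1}$ then $|U|=|I+V|^{-1}$, $|I-U|=|V||I+V|^{-1}$, and the Lebesgue Jacobian of matrix inversion at $I+V$ contributes $|I+V|^{-(d+1)}$; substituting into the Beta type I density and converting back to the reference measure $\mu$ yields $B_d(\alpha,\beta)^{-1}|V|^\beta|I+V|^{-(\alpha+\beta)}$, which is the $\BetaII_d(\beta,\alpha)$ density thanks to the symmetry $B_d(\alpha,\beta)=B_d(\beta,\alpha)$; the converse direction follows by inverting the map. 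The only genuinely delicate point throughout the argument is the Jacobian bookkeeping when switching between the Lebesgue measure natural to each substitution and the $\GL_d$-invariant measure $\mu$ in which the densities are written; once this is organised consistently, every item reduces to a short concrete integral.
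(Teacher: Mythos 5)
Your proposal takes a genuinely different route from the paper: the paper treats this proposition as a collection of known facts and simply cites~\cite{GN,Ma,CaL} and Proposition~\ref{prop:T}, whereas you prove each item directly from the densities. This is self-contained and pedagogically valuable. Items~\eqref{it:relationswishart_sum}, \eqref{it:relationswishart_betaII}, \eqref{it:relationswishart_betaIIinv} and~\eqref{it:relationswishart_betaI-II} are correct as sketched: the Laplace transform computation for~\eqref{it:relationswishart_sum}, the change of variables and the use of Proposition~\ref{prop:T}-\eqref{it:T_prodRuleDoesNotMatter} for~\eqref{it:relationswishart_betaII}, the appeal to Proposition~\ref{prop:T}-\eqref{it:T_inverse} and commutativity of $\odot$ for~\eqref{it:relationswishart_betaIIinv}, and the Jacobian bookkeeping (inversion Jacobian $|I+V|^{-(d+1)}$, then converting back to $\mu$ by a factor $|V|^{(d+1)/2}$) for~\eqref{it:relationswishart_betaI-II} all go through.

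There is, however, a gap in the treatment of item~\eqref{it:relationswishart_betaI}. Your change of variables $(Y_\alpha,Y_\beta)\mapsto(V,U)$ with $U:=Y_\alpha+Y_\beta$ and $V:=U^{-1/2}Y_\alpha U^{-1/2}$ correctly establishes $V\sim\BetaI_d(\alpha,\beta)$, $U\sim\Wish_d(\alpha+\beta)$ and their independence, but this only proves the claim for $w$ equal to the square root. You cannot ``promote'' this to arbitrary $w$ via Proposition~\ref{prop:T}-\eqref{it:T_prodRuleDoesNotMatter} the way you did for item~\eqref{it:relationswishart_betaII}, because that proposition requires the two arguments of $T^w$ (resp.\ $\Tmod^w$) to be independent, and here $Y_\alpha$ and $(Y_\alpha+Y_\beta)^{-1}$ are not independent. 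The correct argument is more delicate and in fact uses the very independence you just established: for any $w$, write $w(U^{-1})=O\,U^{-1/2}$ with $O$ an orthogonal matrix that is a deterministic function of $U$ (this follows from $w(U^{-1})^\top w(U^{-1})=U^{-1}$); then $\multmod_{U^{-1}}(Y_\alpha)=O\,V\,O^{\top}$, and since $V$ is $\ort_d$-invariant (being $\BetaI_d$) and independent of $U$ (hence of $O$), Lemma~\ref{lemma:ortinv} gives $O\,V\,O^{\top}\stackrel{\mathrm{d}}{=}V$. Without this step your sketch does not justify the ``for any choice of $w$'' in~\eqref{it:relationswishart_betaI}; indeed, the paper's remark immediately after the proposition warns that the $\multmod$ versus $\mult$ distinction matters precisely here, which is a symptom of the non-independence that blocks the direct appeal to Proposition~\ref{prop:T}.
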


Notice that~\eqref{it:relationswishart_betaI} and~\eqref{it:relationswishart_betaII} no longer hold if replacing the $\multmod$-operator with $\mult$ or vice versa (unless $w$ is the square root function, for which the two operators coincide).

\subsection{Matrix Lukacs theorem}
\label{app:matrixLukacs}

A well-known characterisation of the gamma distribution, due to Lukacs~\cite{L}, states that if $X$ and $Y$ are two positive, non-Dirac, and independent random variables, then $\frac{X}{X+Y}$ and $X+Y$ are independent if and only if $X$ and $Y$ have gamma distributions with the same scale parameter.
In the matrix setting, there is a generalisation in terms of the Wishart distribution, due to Olkin and Rubin~\cite[Theorems 1 and 2]{OR} and Casalis and Letac~\cite[Theorem 1.2]{CaL}.
We state below only the part of such a generalisation that is necessary for our purposes.

\begin{theorem}[Matrix Lukacs theorem]\label{Lukacs's identity}
If $X$ and $Y$ are two independent Wishart matrices (with possibly different parameters), then $\multmod_{(X+Y)^{-1}}(X)$ and $X+Y$ are independent, for any choice of $w$.
\end{theorem}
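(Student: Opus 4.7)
The plan is to establish the independence directly by computing the joint density of $(U', S) := (\multmod_{S^{-1}}(X),\, X+Y)$ via a change of variables from $(X, Y)$, and verifying that this joint density factors as a product of marginal densities, namely $\BetaI_d(\alpha, \beta)$ and $\Wish_d(\alpha+\beta)$. This at once proves the theorem and, as a by-product, reidentifies the distribution asserted in Proposition~\ref{relationswishart}-\eqref{it:relationswishart_betaI}.

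The key algebraic observation is that $w(S^{-1})^\top w(S^{-1}) = S^{-1}$ by~\eqref{eq:splitFn}, so congruence by $w(S^{-1})$ sends $S$ to $I$; consequently $I - U' = w(S^{-1}) Y w(S^{-1})^\top$ lies in $\pos_d$ and the determinants satisfy $|U'| = |X|/|S|$ and $|I - U'| = |Y|/|S|$. Combined with the standard formula $\diff(A X A^\top) = |A|^{d+1} \diff X$ for symmetric $X$ and invertible $A$, applied with $A = w(S^{-1})$ (so that $|A|^{2} = |S|^{-1}$), one obtains the Jacobian relation $\diff X\, \diff Y = |S|^{(d+1)/2}\, \diff U'\, \diff S$ on $\{I - U' \in \pos_d\}$. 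Crucially, everything depends on the function $w$ only through $|w(S^{-1})|^2 = |S|^{-1}$, so the computation is uniform in $w$.

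Substituting into the product Wishart density $\Gamma_d(\alpha)^{-1} \Gamma_d(\beta)^{-1} |X|^{\alpha-(d+1)/2} |Y|^{\beta-(d+1)/2} e^{-\tr[X+Y]}$ with respect to Lebesgue and collecting powers of $|S|$ yields, after invoking $B_d(\alpha,\beta) = \Gamma_d(\alpha)\Gamma_d(\beta)/\Gamma_d(\alpha+\beta)$ from~\eqref{eq:betaFn},
\begin{align*}
\frac{|U'|^{\alpha-(d+1)/2} |I-U'|^{\beta-(d+1)/2}}{B_d(\alpha,\beta)} \mathbb{1}_{\pos_d}(I-U') \cdot \frac{|S|^{\alpha+\beta-(d+1)/2} e^{-\tr[S]}}{\Gamma_d(\alpha+\beta)}
\end{align*}
as the joint density of $(U', S)$. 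This is a product of the $\BetaI_d(\alpha,\beta)$ and $\Wish_d(\alpha+\beta)$ densities (cf.\ \eqref{Wishart} and~\eqref{Beta}), so $U'$ and $S$ are independent with the claimed marginals.

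The only delicate step is getting the Jacobian and determinant bookkeeping right; once that is in place the rest is symbolic manipulation, and the uniformity in the choice of $w$ follows automatically from the fact that $w$ enters only through the combination $w(S^{-1})^\top w(S^{-1})$. An alternative, slightly longer route would be to handle the square-root case first by this density calculation, and then pass to general $w$ by writing $U' = K U K^\top$ with $K := w(S^{-1}) S^{1/2}$ orthogonal and a deterministic function of $S$, invoking the $\ort_d$-invariance of $\BetaI_d$ together with Lemma~\ref{lemma:ortinv}; but the direct argument above bypasses this split.
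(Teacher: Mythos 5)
The paper itself does not prove Theorem~\ref{Lukacs's identity}: it states the result and cites Olkin--Rubin~\cite{OR} and Casalis--Letac~\cite{CaL}, so there is no in-text argument to compare against. Your direct change-of-variables proof is correct and self-contained, and it proves slightly more than asked: it simultaneously identifies the two marginals, thereby also re-deriving Proposition~\ref{relationswishart}-\eqref{it:relationswishart_sum} and~\eqref{it:relationswishart_betaI}. The algebraic core is sound: with $A:=w(S^{-1})$ one has $A^\top A=S^{-1}$, hence $ASA^\top=A(A^\top A)^{-1}A^\top=I$, and by linearity $I-U'=AYA^\top\in\pos_d$, from which $|U'|=|X|/|S|$, $|I-U'|=|Y|/|S|$, and the density factorisation follow. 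One point worth making explicit is why the change of variables is legitimate when $w$ is only measurable (as in~\eqref{eq:splitFn}): the map $(X,Y)\mapsto(U',S)$ need not be differentiable in $S$, but you can first pass to $(X,S)$ with $S=X+Y$ (Jacobian~$1$) and then apply Fubini--Tonelli to integrate in $X$ at fixed $S$, where $X\mapsto AXA^\top$ is a fixed linear isomorphism of the symmetric matrices with Jacobian $|\!\det A|^{-(d+1)}=|S|^{(d+1)/2}$ (absolute values are needed, since $w$ need not produce a positive determinant). Your observation that $w$ enters the final density only through $|\!\det w(S^{-1})|^2=|S|^{-1}$ is precisely what makes the conclusion uniform in $w$, so the ``alternative route'' via the square-root case and Lemma~\ref{lemma:ortinv} is indeed unnecessary. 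Overall, this is a clean elementary proof of a fact the paper leaves to the literature; the cited references establish a full characterisation theorem, of which the direction needed here is the straightforward one that your computation handles directly.
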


\begin{rem}
As observed in~\cite{OR}, this result is no longer valid, in general, if we replace $\multmod_{(X+Y)^{-1}}(X)$ with $\multmod_{X}((X+Y)^{-1})$ or if we replace the $\multmod$-operator with $\mult$.
\end{rem}

\begin{corollary}
\label{coro:wishartDecomposition}
For $\alpha>\frac{d-1}{2}$ and $\beta>\frac{d-1}{2}$, we have
\begin{align}\label{Wis}
\Wish_{d}(\alpha)&=\Wish_{d}(\alpha+\beta)\odot \BetaI_{d}(\alpha,\beta), \\
\label{invW}
\invWish_{d}(\alpha)&=\invWish_{d}(\alpha+\beta)\odot \invBetaI_{d}(\alpha,\beta).
\end{align}
\end{corollary}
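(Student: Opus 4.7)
The plan is to read both identities off the matrix Lukacs theorem (Theorem~\ref{Lukacs's identity}) combined with Proposition~\ref{relationswishart}. Identity~\eqref{Wis} will be essentially the canonical output of Lukacs, and~\eqref{invW} will follow from it by inversion via Proposition~\ref{prop:T}-\eqref{it:T_inverse}.

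For~\eqref{Wis}, I would start with independent $X\sim\Wish_d(\alpha)$ and $Y\sim\Wish_d(\beta)$, and set $W:=X+Y$ and $B:=\multmod_{W^{-1}}(X)$. Proposition~\ref{relationswishart}-\eqref{it:relationswishart_sum}--\eqref{it:relationswishart_betaI} identifies $W\sim\Wish_d(\alpha+\beta)$ and $B\sim\BetaI_d(\alpha,\beta)$, while Theorem~\ref{Lukacs's identity} guarantees that $W$ and $B$ are independent. It then remains to recover $X$ from $(W,B)$ via a $\mult$-type product: Proposition~\ref{prop:T}-\eqref{it:T_prodRuleDoesNotMatter} lets me specialise to $w(x):=x^{1/2}$, for which $\mult$ and $\multmod$ coincide, so that $B=W^{-1/2}XW^{-1/2}$ and hence $X=\mult_W(B)$ almost surely. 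By the definition of $\odot$, this is exactly the identity $\Wish_d(\alpha)=\Wish_d(\alpha+\beta)\odot\BetaI_d(\alpha,\beta)$.

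For~\eqref{invW}, I would simply invert. Since $B$ is $\ort_d$-invariant and independent of $W$, Proposition~\ref{prop:T}-\eqref{it:T_inverse} yields $[\mult_W(B)]^{-1}\stackrel{\mathrm{d}}{=}\mult_{W^{-1}}(B^{-1})$. The left-hand side has distribution $\invWish_d(\alpha)$ by~\eqref{Wis}; on the right-hand side, $W^{-1}\sim\invWish_d(\alpha+\beta)$ and $B^{-1}\sim\invBetaI_d(\alpha,\beta)$ are independent (by definition of the inverse distributions and because inversion preserves independence), so the distribution is $\invWish_d(\alpha+\beta)\odot\invBetaI_d(\alpha,\beta)$. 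Equating the two gives~\eqref{invW}.

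I do not anticipate any real obstacle here: the matrix Lukacs theorem does the heavy lifting. The only point requiring attention is the distinction between $\mult$ and $\multmod$, since~\eqref{it:relationswishart_betaI} is stated using $\multmod$ whereas $\odot$ is defined via $\mult$; the appeal to Proposition~\ref{prop:T}-\eqref{it:T_prodRuleDoesNotMatter} to specialise to the square-root choice of $w$, where the two operators agree, dissolves this subtlety cleanly.
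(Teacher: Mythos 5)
Your proof is correct and follows essentially the same route as the paper's: identify $X+Y$ and $\multmod_{(X+Y)^{-1}}(X)$ as independent Wishart and Beta~I factors via Proposition~\ref{relationswishart} and the matrix Lukacs theorem, fix $w$ to be the square root so that $X=\mult_{X+Y}\bigl(\mult_{(X+Y)^{-1}}(X)\bigr)$, and deduce~\eqref{invW} from~\eqref{Wis} via Proposition~\ref{prop:T}-\eqref{it:T_inverse}. One small attribution remark: the licence to fix $w$ is not really an application of Proposition~\ref{prop:T}-\eqref{it:T_prodRuleDoesNotMatter} to $\multmod_{(X+Y)^{-1}}(X)$, whose independence hypothesis fails there since $X+Y$ and $X$ are dependent; it is simply that $\odot$ is by definition a $w$-independent operation, so the $\odot$-identity may be verified with any convenient choice of $w$.
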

\begin{proof}
For simplicity, and without loss of generality, we may choose the square root function $w(x):= x^{1/2}$ in the definition of $\odot$.
In this case, we have $\mult_y = \multmod_y$ and $(\mult_y)^{-1} = \mult_{y^{-1}}$, for all $y\in\pos_d$.
Let $X\sim \Wish_{d}(\alpha)$ and $Y \sim \Wish_{d}(\beta)$ be independent.
We then have $X=\mult_{X+Y}(\mult_{(X+Y)^{-1}}(X))$.
By Proposition~\ref{relationswishart}, items~\eqref{it:relationswishart_sum} and~\eqref{it:relationswishart_betaI}, we have $X+Y\sim \Wish_d(\alpha+\beta)$ and $\mult_{(X+Y)^{-1}}(X) \sim \BetaI_{d}(\alpha, \beta)$.
On the other hand, by Theorem~\ref{Lukacs's identity}, $\mult_{(X+Y)^{-1}}(X)$ is independent of $X+Y$.
Therefore, we have $\mult_{X+Y}(\mult_{(X+Y)^{-1}}(X)) \sim \Wish_{d}(\alpha+\beta)\odot \BetaI_{d}(\alpha,\beta)$, which proves~\eqref{Wis}.
Identity~\eqref{invW} follows from~\eqref{Wis} together with Proposition~\ref{prop:T}-\eqref{it:T_inverse}.
\end{proof}

\begin{rem}
It is instructive to prove Corollary~\ref{coro:wishartDecomposition} by using a general function $w$ satisfying~\eqref{eq:splitFn} that is not the square root.
One may then better distinguish the different roles that the two operators $\mult$ and $\multmod$ play and their properties (with particular reference to Proposition~\ref{relationswishart} and Theorem~\ref{Lukacs's identity}).
\end{rem}

\subsection{Cholesky decompositions}
\label{subsec:Bartlett}

The Cholesky decomposition of a Wishart matrix is standard and widely known as \emph{Bartlett decomposition}.
We now give a quick proof of this and deduce the Cholesky decompositions of inverse Wishart and Beta type II random matrices.

Let $\alpha$ and $c_1,\dots,c_d$ be real parameters such that $\alpha-\frac{c_k-1}{2}>0$ for all $1\leq k\leq d$.
For the sake of convenience, let us define the distribution $\mathfrak{B}_d(\alpha;c_1,\dots,c_d)$ as the distribution of an upper triangular matrix $U$ with independent entries such that
\begin{itemize}
\item $U_{k,k}^2\sim \Wish_1\left(\alpha-\frac{c_k-1}{2}\right)$, for $1\leq k\leq d$;
\item $U_{i,j}\sim \mathcal{N}(0,1/2)$, for $1\leq i< j\leq d$.
\end{itemize}
Defining
\begin{equation}
\label{eq:revPermutation}
\omega=(\delta_{i,d+1-j})_{1\leq i,j\leq d}
\end{equation}
to be the matrix corresponding to the reverse permutation, it is immediate to check that
\begin{equation}
\label{eq:BartlettEquiv}
U\sim \mathfrak{B}(\alpha; c_1,\dots,c_d)
\qquad \text{if and only if} \qquad
(\omega U \omega)^{\top} \sim \mathfrak{B}(\alpha; c_d,\dots,c_1) .
\end{equation}

Recall that the Cholesky decomposition of a matrix $x\in\pos_d$ is given by $x=u^{\top} u$, where $u$ is upper triangular with positive diagonal entries (such a matrix $u$ is unique).

\begin{proposition}
\label{prop:bartlett}
Let $X$ be a random matrix in $\pos_d$ and let $X=U^{\top}U$ be its Cholesky decomposition.
Let $\alpha,\beta > \frac{d-1}{2}$.
Let $A\sim \mathfrak{B}(\alpha;1,\dots,d)$ and $B\sim \mathfrak{B}(\beta;d,\dots,1)$ be independent.
\begin{enumerate}[(i)]
\item \label{item:CholeskyWishart}
If $X\sim \Wish_{d}(\alpha)$, then $U\stackrel{\mathrm{d}}{=} A$.
\item \label{item:CholeskyInvWishart}
If $X\sim \invWish_d(\beta)$, then $U \stackrel{\mathrm{d}}{=} B^{-1}$.
\item If $X\sim \BetaII_{d}(\alpha,\beta)$, then $U\stackrel{\mathrm{d}}{=}A B^{-1}$.
\end{enumerate}
\end{proposition}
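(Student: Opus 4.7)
The plan is to prove the three items in order; part (iii) will follow quickly from (i) and (ii), whereas part (ii) is the main obstacle.

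For (i) I would compute directly the Jacobian of the smooth bijection $u \mapsto u^{\top} u$ from upper triangular matrices with positive diagonal onto $\pos_d$. Ordering the upper triangular entries as $u_{11}, u_{12}, u_{22}, u_{13}, u_{23}, u_{33}, \dots$ (and the entries of $x$ accordingly) makes the derivative matrix lower triangular, with determinant $2^d \prod_{k=1}^{d} u_{k,k}^{d-k+1}$. Substituting $|x| = \prod_{k} u_{k,k}^{2}$ and $\tr[x] = \sum_{i \leq j} u_{ij}^{2}$ into the Wishart density $\Gamma_d(\alpha)^{-1} |x|^{\alpha - (d+1)/2} e^{-\tr[x]}$, the joint density of the entries of $U$ factorises into independent one-dimensional densities: the density of $u_{k,k}$ is proportional to $u_{k,k}^{2(\alpha - (k-1)/2) - 1} e^{-u_{k,k}^{2}}$, corresponding to $U_{k,k}^{2} \sim \Wish_1(\alpha - (k-1)/2)$, while the strictly upper triangular entries are independent $\mathcal{N}(0, 1/2)$. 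This is exactly $\mathfrak{B}(\alpha; 1, \dots, d)$.

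The main obstacle is part (ii): inverting $X = U^\top U$ gives $X^{-1} = U^{-1} U^{-\top}$, which is \emph{not} a standard Cholesky decomposition because $U^{-1}$ is upper while $U^{-\top}$ is lower triangular, so one cannot read off the Cholesky factor of $X^{-1}$ from that of $X$. I would bypass this by conjugating with the reverse permutation matrix $\omega$ of~\eqref{eq:revPermutation}. Since $X^{-1} \sim \Wish_d(\beta)$ by~\eqref{inverseWish}, part (i) gives $X^{-1} = A^\top A$ with $A \sim \mathfrak{B}(\beta; 1, \dots, d)$, so $X = A^{-1} A^{-\top}$. Conjugation yields
\[
\omega X \omega = (\omega A^{-\top} \omega)^{\top} (\omega A^{-\top} \omega),
\]
and since $A^{-\top}$ is lower triangular with positive diagonal, the matrix $U_* := \omega A^{-\top} \omega$ is upper triangular with positive diagonal, so this identity is the standard upper Cholesky of $\omega X \omega$. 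Using the $\ort_d$-invariance of $\invWish_d(\beta)$ (so $\omega X \omega \stackrel{\mathrm{d}}{=} X$) together with the uniqueness of the Cholesky factor, one concludes $U \stackrel{\mathrm{d}}{=} U_* = (\omega A^\top \omega)^{-1}$. Finally,~\eqref{eq:BartlettEquiv} gives $\omega A^\top \omega \sim \mathfrak{B}(\beta; d, \dots, 1)$, and therefore $U \stackrel{\mathrm{d}}{=} B^{-1}$.

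Part (iii) is then a short assembly. By Proposition~\ref{relationswishart}-\eqref{it:relationswishart_betaII} together with Proposition~\ref{prop:T}-\eqref{it:T_prodRuleDoesNotMatter}, I may realise $X \stackrel{\mathrm{d}}{=} V^\top Z V$, where $Y \sim \invWish_d(\beta)$ and $Z \sim \Wish_d(\alpha)$ are independent, $V$ is the upper Cholesky factor of $Y$, and $w$ is chosen as the (upper) Cholesky function in the definition of $\odot$. Writing the upper Cholesky $Z = W^\top W$, we obtain $X = (WV)^\top (WV)$; since the product of two upper triangular matrices with positive diagonal is again upper triangular with positive diagonal, uniqueness of Cholesky forces $U = WV$. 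Parts (i) and (ii) give $W \stackrel{\mathrm{d}}{=} A$ and $V \stackrel{\mathrm{d}}{=} B^{-1}$, and independence of $W$ and $V$ is inherited from that of $Z$ and $Y$, yielding $U \stackrel{\mathrm{d}}{=} A B^{-1}$.
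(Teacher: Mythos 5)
Your proposal is correct and follows essentially the same route as the paper: part (i) via the Jacobian of $u\mapsto u^{\top}u$, part (ii) via conjugation by the reverse-permutation matrix $\omega$ together with the $\ort_d$-invariance of the (inverse) Wishart distribution and \eqref{eq:BartlettEquiv}, and part (iii) by realising the Beta type II matrix as $\mult_{Y}(Z)$ with $w$ the Cholesky function and invoking (i)–(ii) plus uniqueness of the Cholesky factor. The only cosmetic issue is that in part (ii) you reuse the letter $A$ for the Cholesky factor of $X^{-1}$, which clashes with the $A$ already fixed in the statement; renaming it would avoid confusion.
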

\begin{proof}
\begin{enumerate}[(i)]
\item If $x\in\pos_d$ has Cholesky decomposition $x=u^{\top}u$, we have
\begin{align*}
|x| = \prod_{k=1}^d u_{k,k}^2, \qquad
\tr[x] = \sum_{1\leq i\leq j\leq d} u_{i,j}^2, \qquad
\mu(\diff x) = 2^d \prod_{k=1}^d u_{k,k}^{-k} \prod_{1\leq i\leq j\leq d} \diff u_{i,j} .
\end{align*}
The first two formulas are immediate, while the third formula can be found for example in~\cite[Exercise~1.1.21-(b)]{T}.
By hypothesis, we have
\[
X\sim \Gamma_{d}(\alpha)^{-1}|x|^{\alpha}e^{-\tr[x]}\mu(\diff x).
\]
As $X=U^{\top} U$, it follows easily from the above formulas that
\[
U\sim c
\prod_{k=1}^d \left\{ u_{k,k}^{2\left(\alpha-\frac{k-1}{2}\right)-1} e^{-u_{k,k}^2} \diff u_{k,k} \right\}
\prod_{1\leq i< j\leq d} \left\{e^{-u_{i,j}^2} \diff u_{i,j} \right\} ,
\]
where $c$ is the normalisation constant.
Therefore, all the entries of $U$ are independent, $U_{k,k}^2$ has the gamma distribution with parameter $\alpha-\frac{k-1}{2}$, for $1\leq k\leq d$, and $U_{i,j}$ is Gaussian with mean $0$ and variance $1/2$, for $1\leq i<j\leq d$.
We conclude that $U\sim \mathfrak{B}(\alpha;1,\dots,d)$.
\item By hypothesis, $X^{-1} = U^{-1} (U^{-1})^{\top} \sim \Wish_d(\beta)$.
As the matrix $\omega$ defined in~\eqref{eq:revPermutation} satisfies $\omega \omega^{\top} = I$, by orthogonal invariance of the Wishart distribution we have
\[
X^{-1}
\stackrel{\mathrm{d}}{=} \omega U^{-1} (U^{-1})^{\top} \omega^{\top}
= \omega U^{-1} \omega \omega^{\top} (U^{-1})^{\top} \omega^{\top}
= V^{\top} V ,
\]
where $V:= (\omega U^{-1} \omega)^{\top}$.
As $V$ is an upper triangular matrix with positive diagonal entries, it follows from~\eqref{item:CholeskyWishart} and from the uniqueness of the Cholesky decomposition that $V\sim \mathfrak{B}(\beta;1,\dots,d)$.
By~\eqref{eq:BartlettEquiv}, this implies that $U^{-1} \sim \mathfrak{B}(\beta;d,\dots,1)$, i.e.\ $U \stackrel{\mathrm{d}}{=} B^{-1}$.
\item By Proposition~\ref{relationswishart}-\eqref{it:relationswishart_betaII}, we have $X \stackrel{\mathrm{d}}{=} \mult_{X_{\beta}^{-1}}(X_{\alpha})$, where $X_{\alpha}\sim \Wish_{d}(\alpha)$ and $X_{\beta}\sim \Wish_{d}(\beta)$ are independent, for any choice of $w$.
In particular, we may choose $w$ to be the Cholesky function, so that, using~\eqref{item:CholeskyWishart}, \eqref{item:CholeskyInvWishart}, and the independence of $X_{\alpha}$ and $X_{\beta}$, we have $(X_{\alpha}, X_{\beta}^{-1}) \stackrel{\mathrm{d}}{=} (A^{\top} A, (B^{-1})^{\top} B^{-1})$.
Therefore, we obtain
\[
X
\stackrel{\mathrm{d}}{=} \mult_{X_{\beta}^{-1}}(X_{\alpha})
= (B^{-1})^{\top} A^{\top} A B^{-1}
= (AB^{-1})^{\top} (AB^{-1}) ,
\]
which implies $U\stackrel{\mathrm{d}}{=}A B^{-1}$, by the uniqueness of the Cholesky decomposition. \qedhere
\end{enumerate}
\end{proof}

\section{Lyapunov exponents of random walks on $\pos_d$}
\label{app:lyapunovExponents}

Here we present a method for computing the Lyapunov exponents of $\GL_d$-invariant random walks on $\pos_d$.
To do so, we start from an argument proposed by Newmann~\cite{newmann} and generalise it via the use of Cholesky decompositions.
Next, we specialise this method to the case of random walks with Wishart, inverse Wishart or Beta type II increments.

Throughout this appendix, we denote by $\lambda_{k}(Y)$ the $k$-th largest eigenvalue of a random matrix $Y$ in $\pos_d$, for $1\leq k\leq d$.
The following lemma is a reformulation of an identity that can be found in~\cite[Eq.~(4)]{newmann}.

\begin{lemma}
\label{lemma:Newmann}
Let $Y$ be a nonsingular random matrix such that $Y^{\top} Y$ is $\ort_d$-invariant. 
Denote by $Y^{[k]}$ the (rectangular) matrix of the first $k$ columns of $Y$.
Define $\mu_1,\dots,\mu_d$ by setting
\begin{equation}
\label{eq:Newmann}
\mu_1 +\dots +\mu_k
:= \mathbb{E} \log \left\lvert \big(Y^{[k]}\big)^{\top} Y^{[k]} \right\rvert ,
\qquad 1\leq k\leq d ,
\end{equation}
assuming that the right-hand side are finite for all $k$.
Consider a family $(Y(n), n\geq 1)$ of i.i.d.\ random matrices with the same distribution as $Y$.
Then, for $1\leq k\leq d$, we have
\[
\mu_k =
\lim_{n\to\infty} \frac{1}{n} \log \lambda_k\left((Y(n)\cdots Y(1))^{\top} (Y(n)\cdots Y(1))\right)
\qquad \text{a.s.}
\]
\end{lemma}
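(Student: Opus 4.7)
The strategy is to reduce the problem to computing Lyapunov exponents of a product of i.i.d.\ upper triangular matrices via an iterated QR procedure that exploits the $\ort_d$-invariance of $Y^{\top}Y$; this is essentially the approach proposed by Newmann, carried out through the Cholesky decomposition.

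\emph{Step~1 (QR reduction).} Set $W(n) := Y(n) \cdots Y(1)$. I will inductively construct a factorisation $W(n) = Q_n U_n$ with $Q_n \in \ort_d$ and $U_n$ upper triangular with positive diagonal. Given $W(n-1) = Q_{n-1} U_{n-1}$, apply QR to $Y(n) Q_{n-1}$ to obtain $Y(n) Q_{n-1} = Q_n V_n$ and set $U_n := V_n U_{n-1}$. Since $Q_{n-1}$ is a measurable function of $(Y(1), \ldots, Y(n-1))$ and hence independent of $Y(n)$, Lemma~\ref{lemma:ortinv} applied with $X := Y(n)^{\top} Y(n)$ and $O := Q_{n-1}$ shows that $Q_{n-1}^{\top} Y(n)^{\top} Y(n) Q_{n-1}$ has the same law as $Y^{\top} Y$ and is independent of $(Q_{n-1}, U_{n-1})$. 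Its Cholesky factor $V_n$ is therefore distributed as the Cholesky factor $V$ of $Y^{\top} Y$ and is independent of $V_1, \ldots, V_{n-1}$. Iterating, $U_n = V_n V_{n-1} \cdots V_1$ is a product of i.i.d.\ upper triangular matrices and $W(n)^{\top} W(n) = U_n^{\top} U_n$.

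\emph{Step~2 (Identifying $\mu_k$).} Writing $Y = \widetilde Q V$ for the QR decomposition of a single copy, the upper triangularity of $V$ gives $V^{[k]} = \begin{pmatrix}[V]_k \\ 0 \end{pmatrix}$, where $[V]_k$ denotes the upper triangular top-left $k \times k$ block. Consequently $(Y^{[k]})^{\top} Y^{[k]} = [V]_k^{\top} [V]_k$, so $|(Y^{[k]})^{\top} Y^{[k]}| = \prod_{i=1}^k V_{ii}^2$, giving $\mu_1 + \cdots + \mu_k = 2 \sum_{i=1}^k \mathbb{E} \log V_{ii}$ and $\mu_k = 2\, \mathbb{E} \log V_{kk}$.

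\emph{Step~3 (Lyapunov exponents of triangular products).} Block extraction commutes with multiplication of upper triangular matrices, so $[U_n]_k = [V_n]_k \cdots [V_1]_k$, and the strong law of large numbers yields
\[
\frac{1}{n} \log |[U_n]_k|^2 = \frac{2}{n} \sum_{j=1}^n \sum_{i=1}^k \log (V_j)_{ii} \xrightarrow[n \to \infty]{} 2 \sum_{i=1}^k \mathbb{E} \log V_{ii}
\qquad \text{a.s.}
\]
Using $U_n^{[k]} = \begin{pmatrix}[U_n]_k \\ 0\end{pmatrix}$ together with the variational characterisation of products of the top eigenvalues, one obtains the lower bound $(\lambda_1 \cdots \lambda_k)(U_n^{\top} U_n) \geq \det((U_n^{[k]})^{\top} U_n^{[k]}) = |[U_n]_k|^2$. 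The matching upper bound follows from the classical fact that the top Lyapunov exponent of a product of i.i.d.\ upper triangular matrices equals the maximum of the expected logarithms of the diagonal entries; applied to the exterior-power product $\wedge^k U_n = (\wedge^k V_n) \cdots (\wedge^k V_1)$, whose diagonal entries in the lexicographic basis are $V_{i_1 i_1} \cdots V_{i_k i_k}$ with $i_1 < \cdots < i_k$, it gives
\[
\lim_{n \to \infty} \frac{1}{n} \log (\lambda_1 \cdots \lambda_k)(W(n)^{\top} W(n)) = 2 \max_{i_1 < \cdots < i_k} \sum_{j=1}^k \mathbb{E} \log V_{i_j i_j} = 2 \sum_{i=1}^k \mathbb{E} \log V_{ii},
\]
using the monotonicity $\mathbb{E} \log V_{11} \geq \cdots \geq \mathbb{E} \log V_{dd}$ that holds automatically for Cholesky diagonals of $\ort_d$-invariant distributions. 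Taking consecutive differences of these partial limits yields $\lim_n \frac{1}{n} \log \lambda_k(W(n)^{\top} W(n)) = \mu_k$.

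\emph{Main obstacle.} The conceptual heart is the QR reduction of Step~1, which turns the general $\ort_d$-invariant product into a triangular one. The principal technical difficulty lies in Step~3, namely upgrading the determinantal lower bound into an exact limit for each individual eigenvalue: this relies on the Lyapunov-exponent result for triangular products, which can be established by direct entry-wise estimates on $U_n$ or by invoking Kingman's subadditive ergodic theorem on $\wedge^k U_n$.
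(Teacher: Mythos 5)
The paper itself does not prove this lemma: it includes only a short remark translating the statement into Newmann's Eq.~(4) via Gram determinants and cites \cite{newmann} for the proof. Your proposal is therefore more ambitious, aiming at a self-contained argument. Steps~1 and~2 are sound and closely parallel the Cholesky computations the paper carries out in the proof of Proposition~\ref{prop:lyapunov} (your QR iteration, built on Lemma~\ref{lemma:ortinv}, is the same mechanism the paper uses to reduce an $\ort_d$-invariant random walk to an i.i.d.\ product of triangular factors, and your identity $|(Y^{[k]})^{\top}Y^{[k]}|=\prod_{i\le k}V_{ii}^{2}$ is exactly~\eqref{eq:equalitiesU}).

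Step~3, however, has two genuine gaps. First, the ``classical fact'' that the top Lyapunov exponent of a product of i.i.d.\ upper triangular matrices equals $2\max_i\mathbb{E}\log V_{ii}$ is itself a nontrivial theorem whose upper bound requires integrability control of the off-diagonal entries of $V$; the hypotheses of the lemma only guarantee $\mathbb{E}|\log V_{ii}|<\infty$ for all $i$ and say nothing about $\mathbb{E}\log^{+}|V_{ij}|$ for $i<j$, so you cannot simply invoke it without a precise statement and a check that its hypotheses are met. Second, the inequality $\mathbb{E}\log V_{11}\ge\cdots\ge\mathbb{E}\log V_{dd}$ is logically indispensable to your argument (without it, the maximum over $k$-subsets need not sit at $\{1,\dots,k\}$, and consecutive differencing fails), yet you merely assert that it ``holds automatically.'' It is in fact true under $\ort_d$-invariance, but it needs an argument: writing $V_{k+1,k+1}$ as the distance from $Ye_{k+1}$ to the span of the first $k$ columns, one has $V_{k+1,k+1}\le \mathrm{dist}(Ye_{k+1},\mathrm{span}(Ye_1,\dots,Ye_{k-1}))$, and the latter equals $V_{kk}$ in distribution by the permutation invariance of $Y^{\top}Y$; so $\mathbb{E}\log V_{k+1,k+1}\le\mathbb{E}\log V_{kk}$. (An alternative and cleaner route to close Step~3 entirely --- closer to Newmann's original argument --- is to invoke the Oseledets theorem for $W(n)$ directly, note that by $\ort_d$-invariance the deterministic subspace $\mathrm{span}(e_1,\dots,e_k)$ is almost surely in general position with respect to the random Oseledets flag, so that $\frac{1}{n}\log|(W(n)^{[k]})^{\top}W(n)^{[k]}|\to\Lambda_1+\cdots+\Lambda_k$, and compare this with the SLLN limit $\mu_1+\cdots+\mu_k$; then monotonicity of the $\mu_k$ is inherited from $\Lambda_1\ge\cdots\ge\Lambda_d$ for free.)
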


\begin{rem}
The equivalence between~\cite[Eq.~(4)]{newmann} and Lemma~\ref{lemma:Newmann} is due to the definition of the usual norm on the $k$-th exterior power of $\mathbb{R}^d$ in terms of a Gram determinant:
\[
\lVert v_1 \wedge \cdots \wedge v_k \rVert^2
= \left\lvert \left(\langle v_i, v_j \rangle\right)_{1\leq i,j\leq k} \right\rvert
= \left\rvert V^{\top} V \right\lvert ,
\]
where $V$ is the $d\times k$ matrix whose columns are the vectors $v_1,\dots, v_k\in \mathbb{R}^d$.
\end{rem}

Let $R=(R(n), n\geq 0)$ be a $\GL_d$-invariant random walk on $\pos_d$ with initial state $R(0)$ and increments $(X(n), n\ge 1)$, where $(X(n), n\ge 1)$ is a family of i.i.d.\ and $\ort_{d}$-invariant random matrices in $\pos_d$ and $R(0)$ is any random matrix in $\pos_d$ independent of $(X(n), n\ge 1)$ (see Section~\ref{subsec:InvRW}).
We will say that the $k$-th \emph{Lyapunov exponent} of the random walk $R$ is the quantity $\mu_k(R)$ defined in the following proposition.

\begin{proposition}
\label{prop:lyapunov}
Let $X(1)=U^{\top}U$, where $U=(U_{i,j})_{1\leq i,j\leq d}$ is the Cholesky decomposition of $X(1)$.
For all $1\leq k\leq d$, we have
\begin{align}
\label{eq:lyapunov}
\mu_k(R)
:= \mathbb{E}\log (U_{k,k}^2)
= \lim_{n\to\infty}\frac{1}{n}\log \lambda_{k}(R(n))
\qquad\text{a.s.},
\end{align}
assuming that all the above expected values are finite.
\end{proposition}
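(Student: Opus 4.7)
The plan is to exploit the flexibility in choosing the function $w$ in the construction of the random walk (Proposition~\ref{prop:invRW}) by taking $w$ to be the Cholesky function, and then reduce the statement to a direct application of Lemma~\ref{lemma:Newmann}. With this choice, and using the second construction~\eqref{eq:RWmod}, one has $R(n) \stackrel{\mathrm{d}}{=} V(n)^{\top} V(n)$, where
\[
V(n) := U(n)\, U(n-1)\cdots U(1)\, w(R(0)), \qquad U(i) := w(X(i)),
\]
and the matrices $U(1), U(2), \dots$ are i.i.d.\ upper triangular with positive diagonal, since each $U(i)$ is the Cholesky factor of $X(i)$.

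First I will handle the case $R(0)=I$. Setting $Y(i) := U(i)$, observe that $Y(i)^{\top} Y(i) = X(i)$ is $\ort_d$-invariant, so Lemma~\ref{lemma:Newmann} applies. It then remains to compute the quantities $\mu_1+\cdots+\mu_k=\mathbb{E}\log\lvert (U^{[k]})^{\top} U^{[k]}\rvert$. The key observation is that because $U$ is upper triangular, the matrix $U^{[k]}$ consists of the first $k$ columns of $U$, whose nonzero entries sit in rows $1,\dots,k$ and form the top-left $k\times k$ upper triangular block of $U$, with diagonal $(U_{1,1},\dots,U_{k,k})$. Consequently $(U^{[k]})^{\top} U^{[k]}$ equals the Gram matrix of this triangular block, whose determinant is $\prod_{j=1}^{k} U_{j,j}^{2}$. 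Taking $\log$, expectations and differences of consecutive partial sums then yields $\mu_k = \mathbb{E}\log(U_{k,k}^2)$, establishing \eqref{eq:lyapunov} when $R(0)=I$.

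Next I will pass from $R(0)=I$ to arbitrary $R(0)\in\pos_d$. Writing $B:=w(R(0))$, one has $R(n)=(V(n)_\circ B)^{\top}(V(n)_\circ B)$ with $V(n)_\circ := U(n)\cdots U(1)$, so the singular values of $V(n)_\circ B$ and of $V(n)_\circ$ are related by the multiplicative Weyl-type bound
\[
\sigma_{\min}(B)\,\sigma_k(V(n)_\circ)\leq \sigma_k(V(n)_\circ B)\leq \sigma_{\max}(B)\,\sigma_k(V(n)_\circ).
\]
Squaring, and noting that $\sigma_{\min}(B)$ and $\sigma_{\max}(B)$ are a.s.\ finite and positive (as $R(0)\in\pos_d$ a.s.), taking $\frac{1}{n}\log$ and sending $n\to\infty$ shows that $\frac{1}{n}\log\lambda_k(R(n))$ has the same a.s.\ limit as in the case $R(0)=I$, namely $\mathbb{E}\log(U_{k,k}^2)$.

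The main obstacle is the subtle point that Lemma~\ref{lemma:Newmann} involves an ordered matrix product $Y(n)\cdots Y(1)$, while the two constructions in Proposition~\ref{prop:invRW} a priori produce different orderings and an additional initial factor. The virtue of the Cholesky choice of $w$ is precisely that $\RWmod$ takes the clean quadratic-form shape above, so that the lemma's hypothesis (namely $\ort_d$-invariance of $Y^{\top}Y$) is immediate and the initial condition enters only as a multiplicative perturbation controlled by Ostrowski/Weyl-type bounds. Aside from this, one should verify the integrability of $\log U_{k,k}^2$, but this follows from the assumption in~\eqref{eq:lyapunov} that the limits are finite (or can be checked case-by-case via Proposition~\ref{prop:bartlett} in the Wishart, inverse Wishart, or Beta type II examples).
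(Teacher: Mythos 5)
Your proposal is correct and follows essentially the same route as the paper: choose $w$ to be the Cholesky function, realise $R(n)$ as a Gram matrix of an ordered product of i.i.d.\ triangular factors via the second construction~\eqref{eq:RWmod}, compute the Gram determinant $\lvert (U^{[k]})^\top U^{[k]}\rvert=\prod_{i\le k}U_{i,i}^2$ from the upper-triangular structure, invoke Lemma~\ref{lemma:Newmann}, and control a general initial state via multiplicative perturbation bounds on singular values (the paper uses the equivalent eigenvalue inequalities generalising~\eqref{lambdaproduct}, together with the fact that $ab$ and $ba$ are cospectral). The one detail the paper makes explicit that you leave implicit is why equality in law suffices: since the two realisations of $R$ agree as processes by Proposition~\ref{prop:invRW}, almost-sure convergence of $\frac{1}{n}\log\lambda_k(R(n))$ to a deterministic constant transfers from the Cholesky realisation to any other.
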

\begin{proof}
Notice first that, if $\xi=(\xi(n), n\geq 0)$ and $\xi'=(\xi'(n), n\geq 0)$ are two real-valued processes with the same law, such that $\xi$ converges a.s., as $n\to\infty$, to a constant $c$, then $\xi'$ also converges a.s.\ to $c$ (this follows from the fact that, since $\xi \stackrel{\mathrm{d}}{=} \xi'$, the probability that $\xi$ converges is the same as the probability that $\xi'$ converges).
Therefore, it suffices to show~\eqref{eq:lyapunov} when $R$ is realised by the $\GL_d$-invariant random walk $\RWmod$ of Proposition~\ref{prop:invRW}, with $w$ being the Cholesky function.
Namely, we let
\begin{align}\label{RCholesky}
R(n)
:= (U(n)\cdots U(0))^{\top} (U(n)\cdots U(0)) , \quad
n\ge 0,
\end{align}
where $X(n)=U(n)^{\top} U(n)$ is the Cholesky decomposition of $X(n)$, for any $n\geq 1$, and $R(0)=U(0)^{\top} U(0)$ is the Cholesky decomposition of $R(0)$.

The next step is to prove the equalities
\begin{equation}
\label{eq:equalitiesU}
\left\lvert \big(U^{[k]}\big)^{\top} U^{[k]} \right\rvert
= \prod_{i=1}^k U_{i,i}^2 ,
\qquad 1\leq k\leq d ,
\end{equation}
where $U^{[k]}$ is the matrix of the first $k$ columns of $U$.
Since $U$ is a $d\times d$ upper triangular matrix, the last $d-k$ rows of $U^{[k]}$ are zero.
Therefore, we have 
\[
\big(U^{[k]}\big)^{\top} U^{[k]}
= \big(U^{[k]}_{[k]}\big)^{\top} U^{[k]}_{[k]} ,
\]
where $U^{[k]}_{[k]}$ is the $k$-th leading principal minor of $U$.
Since $U^{[k]}_{[k]}$ is a $k\times k$ upper triangular matrix with diagonal entries $U_{1,1}, \dots, U_{k,k}$, we then have
\[
\left\lvert \big(U^{[k]}\big)^{\top} U^{[k]} \right\rvert
= \left\lvert \big(U^{[k]}_{[k]}\big)^{\top} U^{[k]}_{[k]} \right\rvert
= \left\lvert U^{[k]}_{[k]} \right\rvert^2
= \prod_{i=1}^k U_{i,i}^2
\]
for all $1\leq k\leq d$, as desired.

By~\eqref{eq:equalitiesU} and the definition~\eqref{eq:lyapunov} of $\mu_k(R)$, we have
\[
\mu_1(R) + \dots + \mu_k(R)
= \mathbb{E} \log \left\lvert \big(U^{[k]}\big)^{\top} U^{[k]} \right\rvert.
\]
It then follows immediately from Lemma~\ref{lemma:Newmann} (taking $Y(n):=U(n)$ for all $n\geq 1$) that~\eqref{eq:lyapunov} holds when $R(0)=U(0)=I$ is the identity matrix.

To prove the claim in the general case where $R(0)$ is an arbitrary random matrix in $\pos_d$ independent of $(X(n), n\ge 1)$, we need an additional simple argument.
Using the standard inequalities $\lambda_d(a) \lambda_k(b) \leq \lambda_k(ab) \leq \lambda_1(a) \lambda_k(b)$ (for $a,b\in\pos_d$), which generalise~\eqref{lambdaproduct}, and the fact that the matrices $ab$ and $ba$ have the same eigenvalues (for $a,b\in\GL_d$), we obtain
\[
\begin{split}
&\lambda_d(R(0)) \, \lambda_k\left((U(n)\cdots U(1))^{\top} (U(n)\cdots U(1))\right)
\leq \lambda_k(R(n)) \\
&\leq \lambda_1(R(0)) \, \lambda_k\left((U(n)\cdots U(1))^{\top} (U(n)\cdots U(1))\right) .
\end{split}
\]
Since both $\lambda_d(R(0))$ and $\lambda_1(R(0))$ are a.s.\ positive, the claim now follows from the case $R(0)=U(0)=I$.
\end{proof}

Let $\psi$ be the \emph{digamma function}, i.e.\ the logarithmic derivative of the gamma function: 
\begin{align}\label{digamma}
\psi(x) := \frac{\diff}{\diff x} \log \Gamma(x) .
\end{align}

\begin{corollary}\label{Lyapunov}
Let $\mu_k(R)$ be the $k$-th Lyapunov exponent of the $\GL_d$-invariant random walk $R$, as defined in~\eqref{eq:lyapunov}.
Let $\alpha,\beta > \frac{d-1}{2}$.
Then,
\begin{enumerate}[(i)]
\item
If $X(1)\sim \Wish_{d}(\alpha)$, then $\mu_k(R) = \psi\left(\alpha-\frac{k-1}{2}\right)$.
\item
If $X(1)\sim \invWish_d(\beta)$, then $\mu_k(R) = -\psi\left(\beta-\frac{d-k}{2}\right)$.
\item
If $X(1)\sim \BetaII_{d}(\alpha,\beta)$, then $\mu_k(R) = \psi\left(\alpha-\frac{k-1}{2}\right)-\psi\left(\beta-\frac{d-k}{2}\right)$.
\end{enumerate}
\end{corollary}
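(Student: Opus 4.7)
The proof plan is to apply Proposition~\ref{prop:lyapunov}, which reduces each Lyapunov exponent $\mu_k(R)$ to the single expectation $\mathbb{E}\log(U_{k,k}^2)$, where $X(1)=U^\top U$ is the Cholesky decomposition, and then to read off the law of $U_{k,k}$ from Proposition~\ref{prop:bartlett}. The only non-trivial analytic ingredient needed is the classical identity
\[
\mathbb{E}[\log Y]=\psi(\alpha)
\qquad\text{for } Y\sim \Wish_1(\alpha),
\]
which follows by differentiating $\log \Gamma(\alpha)=\log\int_0^\infty y^{\alpha-1}e^{-y}\diff y$ with respect to $\alpha$ and exchanging differentiation and integration (a routine dominated-convergence argument valid for $\alpha>0$). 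No step here poses a real obstacle; the whole argument is a direct bookkeeping combination of the three results already established.

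For the Wishart case $X(1)\sim \Wish_d(\alpha)$, Proposition~\ref{prop:bartlett}\,\eqref{item:CholeskyWishart} gives $U\sim \mathfrak{B}(\alpha;1,\dots,d)$, so $U_{k,k}^2\sim \Wish_1\bigl(\alpha-\tfrac{k-1}{2}\bigr)$ and hence $\mu_k(R)=\psi\bigl(\alpha-\tfrac{k-1}{2}\bigr)$. For the inverse Wishart case $X(1)\sim \invWish_d(\beta)$, Proposition~\ref{prop:bartlett}\,\eqref{item:CholeskyInvWishart} gives $U\stackrel{\mathrm{d}}{=} B^{-1}$ with $B\sim \mathfrak{B}(\beta;d,\dots,1)$; since $B$ is upper triangular, the diagonal entries of $B^{-1}$ are the reciprocals of those of $B$, so $U_{k,k}^{-2}\stackrel{\mathrm{d}}{=} B_{k,k}^2\sim \Wish_1\bigl(\beta-\tfrac{d-k}{2}\bigr)$, yielding
\[
\mu_k(R)=\mathbb{E}\log(U_{k,k}^2)=-\mathbb{E}\log(B_{k,k}^2)=-\psi\Bigl(\beta-\tfrac{d-k}{2}\Bigr).
\]

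For the Beta type II case $X(1)\sim \BetaII_d(\alpha,\beta)$, Proposition~\ref{prop:bartlett} gives $U\stackrel{\mathrm{d}}{=} AB^{-1}$ with $A\sim \mathfrak{B}(\alpha;1,\dots,d)$ and $B\sim \mathfrak{B}(\beta;d,\dots,1)$ independent. Both $A$ and $B^{-1}$ are upper triangular, so the diagonal entry $(AB^{-1})_{k,k}$ equals $A_{k,k}/B_{k,k}$, and independence of $A_{k,k}$ and $B_{k,k}$ gives
\[
\mu_k(R)=\mathbb{E}\log(A_{k,k}^2)-\mathbb{E}\log(B_{k,k}^2)=\psi\Bigl(\alpha-\tfrac{k-1}{2}\Bigr)-\psi\Bigl(\beta-\tfrac{d-k}{2}\Bigr),
\]
completing the proof. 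The only thing worth double-checking is that the hypothesis on finiteness of $\mathbb{E}\log(U_{k,k}^2)$ required by Proposition~\ref{prop:lyapunov} holds under the parameter constraints $\alpha,\beta>\tfrac{d-1}{2}$: this is immediate since $\alpha-\tfrac{k-1}{2}>0$ and $\beta-\tfrac{d-k}{2}>0$ for every $1\leq k\leq d$, so the relevant gamma distributions have finite logarithmic moments.
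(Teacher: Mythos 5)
Your proposal is correct and follows essentially the same route as the paper: both invoke Proposition~\ref{prop:bartlett} to identify the law of $U_{k,k}^2$ in each case and then apply the identity $\mathbb{E}\log G=\psi(\nu)$ for a univariate gamma variable $G$ with parameter $\nu$. The only difference is cosmetic: you spell out that the diagonal of $B^{-1}$ is the entrywise reciprocal of the diagonal of $B$ and work directly with ratios $A_{k,k}/B_{k,k}$, whereas the paper states more compactly that $U_{k,k}^2$ is univariate inverse Wishart (resp.\ Beta type II) and then applies the digamma identity to the numerator and denominator, which amounts to the same computation.
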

\begin{proof}
It follows immediately from Proposition~\ref{prop:bartlett} that:
\begin{enumerate}[(i)]
\item
If $X(1)\sim \Wish_{d}(\alpha)$, then $U_{k,k}^2 \sim \Wish_1\left(\alpha-\frac{k-1}{2}\right)$.
\item
If $X(1)\sim \invWish_d(\beta)$, then $U_{k,k}^2 \sim \invWish_1\left(\beta-\frac{d-k}{2}\right)$.
\item
If $X(1)\sim \BetaII_{d}(\alpha,\beta)$, then $U_{k,k}^2 \sim \BetaII_1\left(\alpha-\frac{k-1}{2},\beta-\frac{d-k}{2}\right)$.
\end{enumerate}
The claim then follows from the fact that, if $G$ has a univariate Wishart (i.e., gamma) distribution with parameter $\nu$, then $\mathbb{E} \log G = \psi(\nu)$.
\end{proof}

\section{Markov functions}
\label{app:MarkovFunctions}

Let $(S, \mathcal{S})$ and $(S', \mathcal{S}')$ be measurable spaces and $\phi:S\to S'$ be a measurable function.
Consider a time-homogeneous Markov process $X=(X(n),n\ge0)$ with state space $S$ and transition kernel $\Pi$.
Defining $Y(n):=\phi(X(n))$ for all $n\ge0$, we are interested in conditions that guarantee that the transformed process $Y=(Y(n),n\ge0)$ is Markov in \emph{its own} filtration.
The well-known Dynkin criterion~\cite{Dyn} provides conditions under which $Y$ has the Markov property for any possible initial distribution of $X$.
There is also a more subtle criterion, which has been proved at various levels of generality by Kemedy and Snell~\cite{KS}, Kelly~\cite{Kel}, Rogers and Pitman~\cite{RP} and Kurtz~\cite{Kur}, which ensures that $Y$ is Markov only for special initial distributions of $X$.
We review this below, in the setting that is suited to our needs. 

First we need some basic terminology.
Let $\mathfrak{b}{\mathcal{E}}$ denote the set of real-valued bounded measurable functions on a given measurable space $(E, \mathcal{E})$.
A \emph{Markov kernel} from $S'$ to $S$ is a map $N: S'\times\mathcal{S}\to\mathbb{R}$ such that, for each $y\in S'$, $N(y;\cdot)$ is a probability measure on $(S,\mathcal{S})$ and, for each $A\in\mathcal{S}$, $N(\cdot\,; A)$ is an element of $\mathfrak{b}\mathcal{S}'$.
The kernel $N$ can be also viewed as the \emph{Markov operator} that maps $f\in \mathfrak{b}\mathcal{S}$ to $Nf\in \mathfrak{b}\mathcal{S}'$, where
\begin{align}\label{Markovoperator}
N f(y):=\int_{S}N(y;\diff z) f(z)\,,\quad f\in \mathfrak{b}\mathcal{S},\,\,\, y\in S'.
\end{align}
If $(U, \mathcal{U})$ is another measurable space and we consider a Markov kernel $M: S\times\mathcal{U}\to\mathbb{R}$, then
\begin{align}\label{concat}
NM(y;A):=\int_{S}N(y;\diff z)\,M(z;A),\quad y\in S',\,\,\, A\in\mathcal{U},
\end{align}
is again a Markov kernel.
The associated operator is the usual composition of the Markov operators $N$ and $M$.
Therefore, the operation~\eqref{concat} is clearly associative.

We prove the next theorem, by now fairly classical, using a discrete-time version of the argument given for continuous-time Markov processes by Rogers and Pitman in~\cite[Theorem 2]{RP}.

\begin{theorem}\label{MarkovFunctionsT}
Let $\Pi$ be a Markov kernel from $S$ to itself and let $\phi:S\to S'$ be a measurable function.
Assume that $\mathcal{S}'$ contains all the singleton sets $\{y\}$ and that there exist Markov kernels $Q$ from $S'$ to itself and $K$ from $S'$ to $S$ such that
\begin{enumerate}[(i)]
	\item \label{first}
	$K(y;\phi^{-1}\{y\})=1$ for every $y\in S'$;
	\item \label{second}
	$K \, \Pi = Q \, K$.
\end{enumerate}
For any distribution $\eta$ on $S'$, if $X=(X(n),n\ge0)$ is a time-homogeneous Markov process on $S$ with transition kernel $\Pi$ and initial distribution $\eta K$, then $Y=(Y(n),n\ge0)$ defined by $Y(n):=\phi(X(n))$, $n\ge0$, is a time-homogeneous Markov process (in its own filtration) with initial distribution $\eta$ and transition kernel $Q$.
Moreover, for all $f\in\mathfrak{b}\mathcal{S}$ and $n\ge0$, we have
\begin{align}\label{conditionalonY}
\mathbb{E}\left[f(X(n))\Big|Y(0),\dots,Y(n-1),Y(n)\right]=Kf\,(Y(n)), \quad \text{a.s.}
\end{align}
\end{theorem}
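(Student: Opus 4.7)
The plan is to prove the theorem by induction on $n\ge 0$, establishing the stronger and more convenient claim that the joint distribution of $(Y(0),\dots,Y(n),X(n))$ equals
\[
\eta(\diff y_0)\,Q(y_0;\diff y_1)\cdots Q(y_{n-1};\diff y_n)\,K(y_n;\diff x_n).
\]
Both conclusions of the theorem are immediate corollaries: marginalising out $X(n)$ shows that $(Y(0),\dots,Y(n))$ has the finite-dimensional distribution of a time-homogeneous Markov chain with initial law $\eta$ and transition kernel $Q$, while disintegrating over $(Y(0),\dots,Y(n))$ yields~\eqref{conditionalonY}.

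The preliminary observation driving the whole argument is that, by hypothesis~\eqref{first}, each probability measure $K(y;\cdot)$ is concentrated on the fibre $\phi^{-1}\{y\}$. Since singletons belong to $\mathcal{S}'$, this forces the identity $K(y;\phi^{-1}(A))=\mathbb{1}_A(y)$ for all $A\in\mathcal{S}'$, and more generally, for bounded measurable $h\in\mathfrak{b}\mathcal{S}$ and $g\in\mathfrak{b}\mathcal{S}'$,
\[
\int K(y;\diff x)\,g(\phi(x))\,h(x)=g(y)\,Kh(y).
\]
Applied with $h\equiv 1$, this shows that the $\phi$-pushforward of $\eta K$ is $\eta$, handling the base case $n=0$: indeed $X(0)\sim \eta K$ by assumption, so $Y(0)\sim\eta$, and the same identity shows that conditionally on $Y(0)=y$ the law of $X(0)$ is $K(y;\cdot)$.

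For the inductive step, assume the claim at level $n$. By the Markov property of $X$ with transition $\Pi$, the joint law of $(Y(0),\dots,Y(n),X(n),X(n+1))$ is obtained from the inductive hypothesis by appending $\Pi(x_n;\diff x_{n+1})$. Integrating out $x_n$ converts the factor $K(y_n;\diff x_n)\,\Pi(x_n;\diff x_{n+1})$ into $(K\Pi)(y_n;\diff x_{n+1})$, and invoking the intertwining relation $K\Pi=QK$ from condition~\eqref{second} rewrites this as $\int_{S'} Q(y_n;\diff y')\,K(y';\diff x_{n+1})$. Applying once more the fibre identity with $g(\phi(x_{n+1}))$ a generic bounded measurable test function of $Y(n+1)=\phi(X(n+1))$ identifies $y'$ with $Y(n+1)$ almost surely under $K(y';\cdot)$ and delivers the desired joint law at level $n+1$.

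The argument is essentially bookkeeping of disintegrations, and no step is genuinely hard. The main point of care is the passage from ``$K(y;\cdot)$ is supported on $\phi^{-1}\{y\}$'' to the almost sure identification $\phi(X(n+1))=y'$ inside the inductive step; measurability of singletons in $\mathcal{S}'$ is exactly what makes this rigorous. An equivalent and perhaps cleaner way to present the induction is to verify, for all bounded measurable $f_0,\dots,f_n\in\mathfrak{b}\mathcal{S}'$ and $g\in\mathfrak{b}\mathcal{S}$, the integrated identity
\[
\mathbb{E}\bigl[f_0(Y(0))\cdots f_n(Y(n))\,g(X(n))\bigr]
=\int \eta(\diff y_0)\,\prod_{k=1}^{n}Q(y_{k-1};\diff y_k)\,f_0(y_0)\cdots f_n(y_n)\,Kg(y_n),
\]
from which the Markov property and~\eqref{conditionalonY} can be read off directly.
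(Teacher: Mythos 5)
Your proposal is correct and follows essentially the same route as the paper's proof: both rest on the fibre identity $\int K(y;\diff x)\,g(\phi(x))h(x)=g(y)\,Kh(y)$ derived from hypothesis (i), and both feed the intertwining relation $K\Pi=QK$ into an induction that ultimately produces the identity $\mathbb{E}[g_0(Y(0))\cdots g_n(Y(n))f(X(n))]=\eta\,g_0\,Qg_1\cdots Qg_n\,Kf$, from which the Markov property of $Y$ and the conditional-expectation formula \eqref{conditionalonY} follow immediately. Your presentation carries out the induction at the level of explicit finite-dimensional laws while the paper packages it as a chain of operator identities, but this is only a difference of bookkeeping, not of substance; indeed your closing remark pointing at the integrated identity is precisely the paper's equation just before the conclusion.
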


\begin{proof}
Note that~\eqref{first} implies
\begin{align}\label{composition}
\int_{S}K(y;\diff x)g(\phi(x))f(x)=g(y)\int_{S}K(y;\diff x)f(x),\qquad y\in S',
\end{align}
for all  $g\in\mathfrak{b}\mathcal{S}'$ and $f\in\mathfrak{b}\mathcal{S}$.
If we define the Markov operator $\Phi:\mathfrak{b}\mathcal{S}'\to\mathfrak{b}\mathcal{S}$ by $\Phi g:=g\circ\phi$ for $g\in\mathfrak{b}\mathcal{S}'$, then~\eqref{composition} may be written as
\begin{align}\label{concatenation}
K(\Phi g)f=g Kf.
\end{align}
In the following (as in equation~\ref{concatenation}), operations should be read from right to left, giving priority to parentheses.
Applying $Q$ to both sides of~\eqref{concatenation} and using the assumption~\eqref{second}, we have
\begin{align}\label{concatenation2}
K\,\Pi(\Phi g) f=Q g Kf.
\end{align}
For  test functions $g_{0},\dots,g_{n}$ in $\mathfrak{b}\mathcal{S}'$ and $f\in\mathfrak{b}\mathcal{S}$, using~\eqref{concatenation} and~\eqref{concatenation2} we obtain
\begin{align}
\notag
\begin{split}
K(\Phi g_{0}) \Pi(\Phi g_{1}) \Pi(\Phi g_{2})\cdots \Pi(\Phi g_{n}) f
&=g_{0} K\,\Pi(\Phi g_{1}) \Pi(\Phi g_{2})\cdots \Pi(\Phi g_{n}) f\\
&=g_{0} Q g_{1} K\,\Pi(\Phi g_{2})\cdots \Pi(\Phi g_{n}) f
\end{split}
\intertext{and, inductively,}
\label{concatenation3}
K(\Phi g_{0}) \Pi(\Phi g_{1}) \cdots \Pi(\Phi g_{n}) f
&=g_{0} Q g_{1} \cdots Q g_{n} K f.
\end{align}

Let $\eta$ be any distribution on $S'$. Applying $\eta$ to both sides of~\eqref{concatenation3} we have
\begin{align}
\label{concatenation4}
\eta K(\Phi g_{0}) \Pi(\Phi g_{1}) \cdots \Pi(\Phi g_{n}) f
&=\eta g_{0} Q g_{1} \cdots Q g_{n} K f.
\end{align}
The hypotheses on the processes $X$ and $Y$, together with the identity~\eqref{concatenation4}, imply that
\begin{align*}
\mathbb{E}\left[g_{0}(Y(0)) g_{1}(Y(1)) \cdots g_{n}(Y(n))f(X(n))\right]=\eta g_{0} Q g_{1} Q g_{2}\cdots Q g_{n} K f.
\end{align*}
Taking $f\equiv 1$, we deduce that $Y$ is a time-homogeneous Markov process with initial distribution $\eta$ and transition kernel $Q$.
For general $f$, the right-hand side of the equation above agrees with
\begin{align*}
\mathbb{E}\left[g_{0}(Y(0)) g_{1}(Y(1)) \cdots g_{n}(Y(n))Kf(Y(n))\right].
\end{align*}
By definition of conditional expectation, we deduce~\eqref{conditionalonY}.
\end{proof}

\begin{rem}\label{oper}
Taking $f\equiv 1$ in~\eqref{concatenation} we have
\begin{align*}
K \Phi=\mathrm{id}_{\mathfrak{b}\mathcal{S}'},
\end{align*}
where $\mathrm{id}_{\mathfrak{b}\mathcal{S}'}$ is the identity operator on $\mathfrak{b}\mathcal{S}'$.
Using this and the assumption~\eqref{second} of Theorem~\ref{MarkovFunctionsT}, it is clear that the Markov kernel $Q$ is uniquely determined by the relation 
\begin{align*}
Q=K \, \Pi \, \Phi.
\end{align*}

\end{rem}

\end{document}